\theoremstyle{plain}
\newtheorem{theorem}{Theorem}[section]
\newtheorem{proposition}[theorem]{Proposition}
\newtheorem*{theorem*}{Theorem}
\newtheorem{corollary}[theorem]{Corollary}
\newtheorem*{lemma*}{Lemma}
\newtheorem{lemma}[theorem]{Lemma}
\theoremstyle{remark}
\newtheorem*{case*}{Case}
\theoremstyle{definition}
\newtheorem{problem}{Problem}
\newcommand{\tw}[1]{{}^#1\!}
\newcommand{\stab}{\mathrm{stab}}
\newcommand{\Z}{\mathbb{Z}}
\newcommand{\C}{\mathbb{C}}
\newcommand{\F}{\mathbb{F}}
\newcommand{\irr}{\mathrm{Irr}}
\newcommand{\ibr}{\mathrm{IBr}}
\newcommand{\wh}[1]{\widehat{#1}}
\newcommand{\tr}{\mathrm{Tr}}
\title{Cross-Characteristic Representations of $Sp_6(2^a)$ and Their Restrictions to Maximal Subgroups}
\author{Amanda A. Schaeffer Fry\\\small\textit{Department of Mathematics, University of Arizona, Tucson, AZ 85721, USA}}
\date{}
\begin{document}
\maketitle

\begin{abstract}
We classify all pairs $(V,H)$, where $H$ is a proper subgroup of $G=Sp_6(q)$, $q$ even, and $V$ is an $\ell$-modular representation of $G$ for $\ell\neq 2$ which is absolutely irreducible as a representation of $H$.  This problem is motivated by the Aschbacher-Scott program on classifying maximal subgroups of finite classical groups.

\textit{Keywords:} Cross characteristic representations, Irreducible restrictions, Finite classical groups, Maximal subgroups
\end{abstract}

\section{Introduction}

Finite primitive permutation groups have been a topic of interest since the time of Galois and have applications to many areas of mathematics.  A transitive permutation group $X\leq \rm{Sym}(\Omega)$ is primitive if and only if any point stabilizer $H=\stab_X(\alpha)$, for $\alpha\in \Omega$, is a maximal subgroup.  Many problems involving such groups can be reduced to the special case where $X$ is a finite classical group.  In this case, Aschbacher has described all possible choices for the maximal subgroup $H$ (see \cite{aschbacher}).  Namely, he has described 8 collections $\mathcal{C}_1,...,\mathcal{C}_8$ of subgroups obtained in natural ways (for example, stabilizers of certain subspaces of the natural module for $X$), and a collection $\mathcal{S}$ of almost quasi-simple groups which act absolutely irreducibly on the natural module for $X$.  The question of whether a subgroup $H$ in $\bigcup_{i=1}^8 \mathcal{C}_i$ is in fact maximal has been answered by Kleidman and Liebeck, (see \cite{KleidmanLiebeck}).  When $H\in \mathcal{S}$, we want to decide whether there is some maximal subgroup $G$ such that $H<G<X$, that is, if $H$ is not maximal.  The most challenging case is when $G$ also lies in the collection $\mathcal{S}$.  This suggests the following problem, which is the motivation for this paper.

\begin{problem}\label{prob:aschbakerscottprogram}
Let $\mathbb{F}$ be an algebraically closed field of characteristic $\ell\geq0$.  Classify all triples $(G,V,H)$ where $G$ is a finite group with $G/Z(G)$ almost simple, $V$ is an $\mathbb{F}G-$module of dimension greater than $1$, and $H$ is a proper subgroup of $G$ such that the restriction $V|_H$ is irreducible.
\end{problem}

In \cite{brundankleshchev01}, \cite{kleshchevsheth02}, and \cite{kleshchevtiep04}, Brundan, Kleshchev, Sheth, and Tiep have solved \prettyref{prob:aschbakerscottprogram} for $\ell>3$ when $G/Z(G)$ is an alternating or symmetric group.  Liebeck, Seitz, and Testerman have obtained results for Lie-type groups in defining characteristic $\ell$ in \cite{liebeck85}, \cite{seitz87}, and \cite{seitztesterman90}.

Assume now that $G$ is a finite group of Lie type defined in characteristic $p\neq \ell$, with $q$ a power of $p$.  In \cite{nguyentiep08}, Nguyen and Tiep show that when $G=\tw3 D_4(q)$, the restrictions of irreducible representations are reducible over every proper subgroup, and in \cite{himstedtnguyentiep09}, Himstedt, Nguyen, and Tiep prove that this is the case for $G=\tw2 F_4(q)$ as well.  Nguyen shows in \cite{Nguyen08} that when $G=G_2(q), \tw2 G_2(q),$ or $\tw2 B_2(q)$, there are examples of triples as in \prettyref{prob:aschbakerscottprogram} and finds all such examples.  In \cite{TiepKleshchev10}, Tiep and Kleshchev solve \prettyref{prob:aschbakerscottprogram} in the case that $SL_n(q)\leq G\leq GL_n(q)$.  In \cite{seitz}, Seitz provides a list of possibilities for $(H,G)$ as in \prettyref{prob:aschbakerscottprogram} in the case that $H$ is a finite group of Lie type and $G$ is a finite classical group, both defined in the same characteristic.  In particular, his results signify the importance of studying \prettyref{prob:aschbakerscottprogram} in the case $G=Sp_6(2^a)$.

Here we focus on the case where $G=Sp_{2n}(q)$ for $n=2,3$ with $q$ even, and $H$ is a proper subgroup.  In considering this problem, it is useful to know the low-dimensional $\ell$-modular representations of $Sp_6(q)$.  We prove the following theorem, which describes these representations.  In the theorem, let $\alpha_3, \beta_3, \rho_3^1, \rho_3^2, \tau_3^i,$ and $\zeta_3^i$ denote the complex Weil characters of $Sp_6(q)$, as in \cite{TiepGuralnick04} (see \prettyref{tab:TiepGuralnickTable1}), and let $\chi_j$, $1\leq j\leq 35$ be as in the notation of \cite{white2000}.

\begin{theorem}\label{thm:lowdimreps}
Let $G=Sp_6(q)$, with $q\geq 4$ even, and let $\ell\neq 2$ be a prime dividing $|G|$.  Suppose $\chi\in\ibr_\ell(G)$.  Then:

A) If $\chi$ lies in a unipotent $\ell$-block, then either
\begin{enumerate}
\item $\chi\in\left\{1_G,  \wh{\alpha}_3,  \wh{\rho}_3^1- \left\{\begin{array}{cc}
                                                                             1, & \ell|(q^2+q+1),\\
                                                                             0, & \hbox{otherwise}
                                                                           \end{array}\right.,\quad                                                                      \widehat{\beta}_3- \left\{\begin{array}{cc}
                                                                             1, & \ell|(q+1),\\
                                                                             0, & \hbox{otherwise}
                                                                           \end{array}\right.,\quad
                                                                            \widehat{\rho}_3^2 - \left\{\begin{array}{cc}
                                                                             1, & \ell|(q^3+1),\\
                                                                             0, & \hbox{otherwise}
                                                                           \end{array}\right.\right\}$,
\item $\chi$ is as in the following table:
\begin{center}\begin{tabular}{|c|c|c|}
                                             \hline
Condition on $\ell$ & $\chi$ & Degree $\chi(1)$ \\
\hline
$\ell|(q^3-1)$ or & & \\
 $3\neq\ell|(q^2-q+1)$ & $\wh{\chi}_6$ & $q^2(q^4+q^2+1)$\\
\hline
$\ell|(q^2+1)$ & $\wh{\chi}_6-1_G $ & $q^2(q^4+q^2+1)-1$\\
\hline
 & $\wh{\chi}_{28}$ & \\
$\ell|(q+1)$ & $=\wh{\chi}_6-\wh{\chi}_3-\wh{\chi}_2+1_G$ & $(q^2+q+1)(q-1)^2(q^2+1)$ \\
                                             \hline
                                           \end{tabular},\end{center}

\item $\chi$ is as in the following table:
\begin{center}\begin{tabular}{|c|c|c|}
                                             \hline
Condition on $\ell$ & $\chi$ & Degree $\chi(1)$ \\
\hline
$\ell|(q^3-1)$ or & & \\
 $3\neq\ell|(q^2-q+1)$ & $\wh{\chi}_7$ & $q^3(q^4+q^2+1)$\\
\hline
$\ell|(q^2+1)$ & $\wh{\chi}_7-\wh{\chi}_4$ & $q^3(q^4+q^2+1)-q(q+1)(q^3+1)/2 $\\
\hline
 & $\wh{\chi}_{35}-\wh{\chi}_5$ & \\
$\ell|(q+1)$ & $=\wh{\chi}_7-\wh{\chi}_6+\wh{\chi}_3-\wh{\chi}_1$ & $(q-1)(q^2+1)(q^4+q^2+1)-q(q-1)(q^3-1)/2$ \\
                                             \hline
                                           \end{tabular},\end{center}
                                            or
\item $\chi(1)\geq D$, where $D$ is as in the table:
\begin{center}
\begin{tabular}{|c|c|}
\hline
Condition on $\ell$ & $D$\\
\hline
$\ell|(q^3-1)(q^2+1)$ & $\frac{1}{2}q^4(q-1)^2(q^2+q+1)$\\
\hline
$\ell|(q+1)$,  &\\
$(q+1)_\ell\neq3$ & $\frac{1}{2}q (q^3 - 2) (q^2 + 1) (q^2 - q + 1) - \frac{1}{2}q (q - 1) (q^3 - 1) + 1$\\
\hline
$\ell|(q+1)$,  &\\
$(q+1)_\ell=3$ &$\frac{1}{2}q (q^3 - 2) (q^2 + 1) (q^2 - q + 1)  + 1$ \\
\hline
$3\neq\ell|(q^2-q+1)$ & $\frac{1}{2}q^4(q-1)^2(q^2+q+1)-\frac{1}{2}q(q-1)^2(q^2+q+1)=\frac{1}{2}q(q^3-1)^2(q-1)$ \\
\hline
\end{tabular}
\end{center}
\end{enumerate}
B) If $\chi$ does not lie in a unipotent block, then either
\begin{enumerate}
\item $\chi\in\{\wh{\tau}_3^i,  \wh{\zeta}_3^j|  1\leq i\leq ((q-1)_{\ell'}-1)/2,  1\leq j\leq ((q+1)_{\ell'}-1)/2\}$,
\item $\chi(1)=(q^2+1)(q-1)^2(q^2+q+1)$ or $(q^2+1)(q+1)^2(q^2-q+1)$ (here $\chi$ is the restriction to $\ell$-regular elements of the semisimple character indexed by a semisimple $\ell'$ - class in the family  $c_{6,0}$ or $c_{5,0}$ respectively),
\item $\chi(1)=(q-1)(q^2+1)(q^4+q^2+1)$ or $(q+1)(q^2+1)(q^4+q^2+1)$ (here $\chi$ is the restriction to $\ell$-regular elements of the semisimple character indexed by a semisimple $\ell'$ - class in the family $c_{10,0}$ or $c_{8,0}$ respectively), or
\item $\chi(1)\geq q(q^4+q^2+1)(q-1)^3/2$.
\end{enumerate}
\end{theorem}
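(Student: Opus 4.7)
The plan is to split the argument along the block decomposition in the theorem, treating the unipotent blocks (part A) and the non-unipotent blocks (part B) separately, and in each part to combine the ordinary character theory of $Sp_6(q)$ computed by White \cite{white2000} with the Fong-Srinivasan classification of unipotent blocks, the relevant decomposition matrices from the literature, and the reduction formulas for Weil characters given by Guralnick and Tiep \cite{TiepGuralnick04}.

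For part A, I will break into cases according to which cyclotomic factor $\Phi_e(q)$, $e\in\{1,2,3,4,6\}$, is divisible by $\ell$, since this determines the structure of the unipotent blocks via $e$-Harish-Chandra theory. In each case the unipotent blocks and their defect groups are known explicitly from Fong-Srinivasan, and the decomposition matrix of each block is either available in the literature (Okuyama-Waki, Geck-Hiss, James, and work on Brauer trees for blocks of cyclic defect) or can be deduced from Brauer tree theory together with Harish-Chandra induction from the rank-one Levi subgroups. The low-degree part of each unipotent block is then read off directly: the reductions of $\wh{\alpha}_3,\wh{\beta}_3,\wh{\rho}_3^1,\wh{\rho}_3^2$ in item (1) are governed by Guralnick-Tiep, which explains the $-1$ corrections, while the specific decompositions $\wh{\chi}_6, \wh{\chi}_7,\wh{\chi}_{28}$, and $\wh{\chi}_{35}-\wh{\chi}_5$ in items (2) and (3) are extracted from the unipotent decomposition matrix in the corresponding case. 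The bound $D$ in item (4) is obtained by listing all unipotent ordinary characters of degree below $D$, and using the decomposition matrix to show that any unipotent Brauer character of degree below $D$ must be a composition factor of one of these; comparison with items (1)-(3) then exhausts the possibilities.

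For part B, I use Jordan decomposition of characters to identify non-unipotent $\ell$-blocks with non-trivial $\ell$-regular semisimple classes $[s]$ in the dual group $G^*$, which for $Sp_6(q)$ with $q$ even is isomorphic to $Sp_6(q)$ itself via the exceptional isogeny available in characteristic $2$. The non-unipotent Weil characters $\wh{\tau}_3^i,\wh{\zeta}_3^j$ in item (1) are handled directly via Guralnick-Tiep. For the remaining classes, the semisimple character itself is a lower bound for the degree of any Brauer character in its block, so the task reduces to enumerating semisimple classes $[s]$ with $|C_{G^*}(s)|$ large enough to yield a character of degree below $q(q^4+q^2+1)(q-1)^3/2$; matching these with White's families $c_{5,0},c_{6,0},c_{8,0},c_{10,0}$ produces exactly the four exceptional degrees in items (2) and (3), and the remaining classes fall under the bound in (4).

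The main obstacle will be the lower bound in part A(4), especially in the case $\ell\mid(q+1)$ with $(q+1)_\ell=3$, because then the relevant unipotent $3$-block has noncyclic defect and a more intricate decomposition matrix with several Brauer characters of degree close to the border $D$. A secondary but essentially clerical difficulty is matching White's character labels with the Levi-induction data needed to invoke the general decomposition-matrix results; this has to be done carefully enough that the list in items (1)-(3) is confirmed to be complete. All remaining steps reduce to inspection of known decomposition matrices, direct application of the Weil-character reduction formulas, or enumeration of centralizers in the dual group.
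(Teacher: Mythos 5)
Your overall architecture matches the paper's: part A is proved case-by-case according to which cyclotomic factor of $|G|$ the prime $\ell$ divides, reading degrees off White's unipotent decomposition matrices, and part B is proved by locating $\chi$ in a Lusztig series $\mathcal{E}_\ell(G,(s))$ and bounding degrees via the centralizers of semisimple classes in $G^\ast$. However, there are two concrete gaps in the plan as written.

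First, in part A your strategy of extracting everything ``from the unipotent decomposition matrix in the corresponding case'' breaks down precisely in the case you flag, $\ell\mid(q+1)$, but for a different reason than the one you give: White's decomposition matrix for the principal block there is not completely known. It contains unknown entries $\beta_1\in\{0,1\}$ and $\beta_2,\beta_3$ with $1\leq\beta_2\leq(q+2)/2$, $1\leq\beta_3\leq q/2$, so there is no finished matrix to read off. The paper works around this by (i) using the stated ranges of $\beta_2,\beta_3$ to prove the a priori lower bound $\varphi_{10}(1)>D_\ell$ for $q\geq 4$, so that the uncertain character is pushed above the cutoff, and (ii) pinning down $\beta_1=1$ by observing that $\wh{\rho}_3^2-1_G$ is irreducible when $\ell\mid(q^3+1)$ by Guralnick--Tiep. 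Your plan needs both of these steps added; Brauer-tree or Harish-Chandra arguments will not supply the missing entries here (and for $\ell\mid(q^2+1)$ the relevant Sylow subgroup is not cyclic, so Brauer trees do not apply there either).

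Second, and more seriously, in part B the statement ``the semisimple character itself is a lower bound for the degree of any Brauer character in its block'' (i.e.\ divisibility of $\chi(1)$ by $[G^\ast:C_{G^\ast}(s)]_{p'}$) is not strong enough to prove B(1)--B(4). For $s$ in the families $c_{3,0}$ or $c_{4,0}$ the index is only $(q^6-1)/(q\mp1)\approx q^5$, far below the bound $B=q(q^4+q^2+1)(q-1)^3/2\approx q^8/2$, so divisibility alone cannot show that the only small Brauer characters in these series are $\wh{\tau}_3^i$ and $\wh{\zeta}_3^j$; likewise it cannot show that the degrees in B(2)--(3) are isolated in their series. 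The paper's essential extra ingredient is the Bonnaf\'e--Rouquier Morita equivalence between $\mathcal{E}_\ell(G,(s))$ and the unipotent blocks of the dual Levi $L\cong C_{G^\ast}(s)$ (Lemma \ref{lem:moritaequiv} and Proposition \ref{prop:jordandecompforibrs}), which yields the multiplicative formula $\chi(1)=[G^\ast:C_{G^\ast}(s)]_{2'}\,\varphi(1)$ with $\varphi$ in a unipotent block of $L$; one then multiplies by $\mathfrak{d}_\ell(Sp_4(q))=q(q-1)^2/2$ (resp.\ the analogous bounds for $GL_3(q)$, $GU_3(q)$, $GL_2(q)\times SL_2(q)$, $GU_2(q)\times SL_2(q)$). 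Note that this bound is sharp: for $s\in c_{4,0}$ one has $\frac{q^6-1}{q+1}\cdot\frac{q(q-1)^2}{2}=B$ exactly, so no cruder estimate can close this case. You should incorporate this Jordan-decomposition statement for Brauer characters (or an equivalent) explicitly.
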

Note that \prettyref{thm:lowdimreps} generalizes \cite[Theorem 6.1]{TiepGuralnick04}, which gives the corresponding bounds for ordinary representations of $Sp_{2n}(q)$ with $q$ even.

Our main result is the following complete classification of triples $(G,V,H)$ as in \prettyref{prob:aschbakerscottprogram} in the case $G=Sp_{6}(q)$ with $q\geq 4$ even.
\begin{theorem}\label{thm:mainresult}
Let $q$ be a power of $2$ larger than $2$, and let $(G,V,H)$ be a triple as in \prettyref{prob:aschbakerscottprogram}, with $\ell\neq 2$, $G=Sp_{6}(q)$, and $H<G$ a proper subgroup.  Then:
\begin{enumerate}
\item $P_3'\leq H\leq P_3$, the stabilizer of a totally singular $3$-dimensional subspace of the natural module $\F_q^{6}$, and the Brauer character afforded by $V$ is the Weil character $\widehat{\alpha_3}$; or
\item $H=G_2(q)$, and the Brauer character afforded by $V$ is one of the Weil characters \begin{itemize}
    \item $\widehat{\rho}_3^1 - \left\{\begin{array}{cc}
                                1, & \ell|\frac{q^3-1}{q-1},\\
                                0, & \hbox{otherwise}
                                \end{array}\right.$, \quad degree $q(q+1)(q^3+1)/2-\left\{\begin{array}{c}
                                1 \\
                                0
                                \end{array}\right.$
    \item $\widehat{\tau}_3^i$, $1\leq i\leq ((q-1)_{\ell'}-1)/2$, \quad degree $(q^6-1)/(q-1)$
    \item $ \widehat{\alpha}_3$, \quad degree $q(q-1)(q^3-1)/2$
    \item $\widehat{\zeta}_3^i$, $1\leq i\leq ((q+1)_{\ell'}-1)/2$, \quad degree $(q^6-1)/(q+1)$.
\end{itemize}
as in the notation of \cite{TiepGuralnick04} (see \prettyref{tab:TiepGuralnickTable1}).
\end{enumerate}
Moreover, each of the above situations indeed gives rise to such a triple $(G,V,H)$.
\end{theorem}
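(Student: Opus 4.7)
The plan is to combine the Aschbacher--Kleidman--Liebeck classification of maximal subgroups of $G = Sp_6(q)$ with the degree information in \prettyref{thm:lowdimreps}. If $H < G$ is proper with $V|_H$ irreducible, then for any maximal subgroup $M \leq G$ containing $H$ we also have $V|_M$ irreducible. The argument therefore splits into two parts: first identify every pair $(V, M)$ with $M$ maximal in $G$ and $V|_M$ irreducible, and then, for each such pair, determine the proper subgroups $H \leq M$ preserving irreducibility.

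For the first part, the maximal subgroups of $G$ for $q$ even are the three parabolics $P_1, P_2, P_3$; the reducible non-parabolic stabilizer $Sp_2(q) \times Sp_4(q)$; the imprimitive subgroup $Sp_2(q) \wr S_3$; the field-extension subgroup $Sp_2(q^3).3$; the subfield subgroup $Sp_6(q_0)$ with $q = q_0^r$ for $r$ prime; the classical-type subgroups $GU_3(q).2$ and $GL_3(q).2$; and the $\mathcal{S}$-class subgroup $G_2(q)$. \prettyref{thm:lowdimreps} organizes the candidate Brauer characters $\chi$ of $V$ into three tiers: (a) the Weil and explicit small unipotent characters; (b) the semisimple families of moderate degree appearing in part B; and (c) the large-degree case $\chi(1) \geq D$. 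For each non-$\mathcal{S}$ maximal $M$, I would bound the largest Brauer degree $b(M)$ of $M$ using known character tables for Levi subgroups of the parabolics, the result of \cite{TiepGuralnick04} applied to $Sp_4(q)$, and standard bounds for $GL_3$, $GU_3$, and subfield subgroups. In every case this yields $\chi(1) > b(M)$ whenever $\chi$ falls in tier (b) or (c), giving the required contradiction.

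Thus only tier (a) need be checked by direct computation against $P_1, P_2, P_3$, $Sp_2(q) \times Sp_4(q)$, and $G_2(q)$. The small unipotent characters are ruled out by evaluating at a central element of the maximal subgroup or by exhibiting a visible Harish-Chandra constituent. The restrictions of the Weil characters to the parabolics are already computed in \cite{TiepGuralnick04}, from which only $\widehat{\alpha}_3|_{P_3}$ survives as irreducible; to descend from $P_3$ to $P_3'$ one uses the explicit structure $P_3 = [q^6]{:}GL_3(q)$ and verifies via Clifford theory that the quotient $P_3 / P_3'$ acts trivially on the relevant eigenspace of the unipotent radical, so irreducibility persists precisely for subgroups between $P_3'$ and $P_3$. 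For $M = G_2(q)$, the ordinary restriction decompositions of the six Weil characters of $Sp_6(q)$ (going back to work referenced in \cite{TiepGuralnick04}) show that $\widehat{\alpha}_3$, $\widehat{\rho}_3^1$, $\widehat{\tau}_3^i$, and $\widehat{\zeta}_3^i$ restrict irreducibly, while $\widehat{\beta}_3$ and $\widehat{\rho}_3^2$ split off a fixed vector. Lifting these statements to cross characteristic requires comparing $\ell$-decomposition matrices for $Sp_6(q)$ and $G_2(q)$ and using the known $\ell$-modular low-degree irreducibles of $G_2(q)$.

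The principal obstacle is the $G_2(q)$ analysis in cross characteristic: verifying that the reductions modulo $\ell$ of the listed Weil Brauer characters remain irreducible on restriction, and ruling out any proper subgroup of $G_2(q)$ that could afford an irreducible representation of the required degree. The latter amounts to a miniature Aschbacher--Scott analysis for $G_2(q)$ itself, handled by the low-dimensional cross-characteristic representation theory of $G_2(q)$ due to \cite{Nguyen08}. By contrast, the parabolic and other geometric maximal subgroups are dispatched cleanly by the bound $D$ of \prettyref{thm:lowdimreps}, so the heart of the argument really does live in a careful interaction between the modular character theory of $Sp_6(q)$ and that of $G_2(q)$.
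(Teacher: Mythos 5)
Your overall architecture (reduce to maximal overgroups via degree bounds from \prettyref{thm:lowdimreps}, then handle the survivors individually, then descend) matches the paper's. But there is a concrete gap: your list of maximal subgroups of $Sp_6(q)$, $q$ even, is wrong. The subgroups $GL_3(q).2$ and $GU_3(q).2$ are \emph{not} maximal for even $q$ --- they are contained in $O_6^+(q)$ and $O_6^-(q)$ respectively --- and the genuinely maximal subgroups $O_6^\pm(q)\cong L_4^\pm(q).2$ are missing from your list entirely. This omission is not cosmetic. The groups you substituted are small enough that $\mathfrak{m}(GL_3(q).2)$ and $\mathfrak{m}(GU_3(q).2)$ fall below $\mathfrak{d}_\ell(Sp_6(q))=(q^3-1)(q^3-q)/(2(q+1))$, so they would indeed be dispatched by degree bounds; but $\mathfrak{m}_\ell(O_6^\pm(q))$ is on the order of $2(q+1)^2(q^2+1)(q^2\pm q+1)$, which comfortably exceeds the degrees of all the Weil characters and several other low-dimensional Brauer characters. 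So the $O_6^\pm(q)$ case cannot be closed by your tier (b)/(c) argument, and the paper devotes an entire section to it: one restricts to the index-$2$ subgroup $K^\pm=L_4^\pm(q)$, uses Clifford theory and the degree lists of $L_4^\pm(q)$ to isolate a handful of candidate characters, eliminates them by comparing values on involution classes, and for $q=4$ must additionally handle the fact that $\alpha_3|_{SL_4(4)}$ and $\beta_3|_{SU_4(4)}$ each split as a sum of two irreducible characters of equal degree which are shown (by identifying the order-$2$ outer automorphism realized inside $O_6^\pm(4)$) to extend to $H^\pm$, so the restrictions are reducible. Without this analysis your proof does not establish that $O_6^\pm(q)$ contributes no triples, and hence does not establish the theorem.

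A secondary point: your descent from $P_3$ to proper subgroups is asserted rather than proved. The claim is not merely that $P_3/P_3'$ ``acts trivially on the relevant eigenspace''; one must rule out every $H<P_3$ not containing $P_3'$, including subgroups meeting the unipotent radical $Z$ in the proper $SL_3(q)$-invariant subgroup $R=[q^3]$ or trivially. The paper does this via a transitivity theorem forcing $SL_3(q)\lhd HZ/Z$, a classification of $SL_3(q)$-invariant subgroups of $Z$, the vanishing of $H^1(SL_3(q),\F_q^3)$ to conjugate complements, and a final Clifford-theoretic degree contradiction. You should either supply these steps or cite an argument that does.
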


Note that \prettyref{thm:mainresult} tells us that pair (ii) in the main theorem of \cite{seitz} does not occur for the case $n=7$, $q$ even, and that pair (iv) does occur.

We also prove the following complete classifications of triples as in \prettyref{prob:aschbakerscottprogram} when $H$ is a maximal subgroup of $G=Sp_{4}(q)$, $q\geq 4$ even, $G=Sp_6(2)$, and $G=Sp_4(2)$.

\begin{theorem}\label{thm:mainresultSp4}
Let $q$ be a power of $2$ larger than $2$, $\ell\neq 2$, $G=Sp_{4}(q)$, and $H<G$ a maximal subgroup.  Then $(G,V,H)$ is a triple as in \prettyref{prob:aschbakerscottprogram} if and only if $H=P_2$, the stabilizer of a totally singular $2$-dimensional subspace of the natural module $\F_q^{4}$, and the Brauer character afforded by $V$ is the Weil character $\widehat{\alpha_2}$.
\end{theorem}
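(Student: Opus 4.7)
The plan is to follow the same template as the proof of \prettyref{thm:mainresult}, adapted to the smaller rank. The first step is to enumerate the maximal subgroups $H$ of $G = Sp_4(q)$ for $q = 2^a \geq 4$. Using Aschbacher's theorem together with the explicit tables in \cite{KleidmanLiebeck}, these fall into the Aschbacher classes $\mathcal{C}_1, \mathcal{C}_2, \mathcal{C}_3, \mathcal{C}_5, \mathcal{C}_8$, and $\mathcal{S}$, and up to conjugacy are: the maximal parabolics $P_1$ and $P_2$; the imprimitive subgroup $Sp_2(q) \wr S_2$; the extension-field subgroup $Sp_2(q^2).2$; the subfield subgroups $Sp_4(q_0).r$ with $q = q_0^r$ and $r$ prime; the orthogonal subgroups $O_4^\pm(q)$; and, when $a$ is odd, the Suzuki subgroup $Sz(q)$. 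In parallel, I will invoke the $Sp_4$-analog of \prettyref{thm:lowdimreps}, available from \cite{TiepGuralnick04} and \cite{white2000}: apart from $1_G$, the lowest degrees in $\ibr_\ell(G)$ come from the Weil characters $\wh{\alpha}_2, \wh{\beta}_2, \wh{\rho}_2^1, \wh{\rho}_2^2, \wh{\tau}_2^i, \wh{\zeta}_2^j$ together with their small $\ell$-modular corrections, and every other irreducible Brauer character has degree of order at least $q^3$.

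Next I will apply the standard necessary inequality $\chi(1)^2 \leq |H|$: if $\chi \in \ibr_\ell(G)$ affords $V$ and $V|_H$ is irreducible, then $\chi(1)$ is the degree of an irreducible Brauer character of $H$, so $\chi(1)^2 \leq |H|$. Comparing $|H|$ with the degree bounds above rules out the subfield subgroups, the Suzuki subgroup $Sz(q)$, and the imprimitive and extension-field subgroups for every non-Weil $\chi$, and eliminates them entirely as soon as $\chi(1)$ exceeds the smallest Weil degree. The remaining candidates are the parabolics $P_1, P_2$ and the orthogonal subgroup $O_4^+(q) \cong (SL_2(q) \times SL_2(q)).2$, each paired only with Weil $\chi$. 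This reduces the problem to a short explicit list of pairs $(H, \chi)$.

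For each surviving pair I will decide irreducibility by a direct character computation. For $H = P_2$ and $V$ affording $\wh{\alpha}_2$, the Weil module admits a well-known realization on the group algebra of a totally singular $2$-space $W \subset \F_q^4$ whose stabilizer in $G$ is $P_2$: the unipotent radical of $P_2$ acts by translation (producing the linear characters of $W$ as weights), and the Levi factor $GL(W)$ acts transitively on the nontrivial such characters, forcing irreducibility of $\wh{\alpha}_2|_{P_2}$. For every other remaining pair, Clifford theory with respect to the unipotent radical of $H$ (a normal $2$-subgroup, hence semisimple in cross characteristic) reduces irreducibility to a transitivity question for the Levi factor on the characters of the radical that occur in $V|_H$; in each case a short calculation exhibits non-transitivity, hence reducibility. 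The main obstacle will be the small-$\ell$ corrections to the Weil characters when $\ell \mid q \pm 1$ or $\ell \mid q^2+1$, where decomposition numbers modify the ordinary characters and the Brauer-theoretic book-keeping must be done carefully; these are handled in exactly the same spirit as the corresponding portions of \prettyref{thm:lowdimreps} and \prettyref{thm:mainresult}.
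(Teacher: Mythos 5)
Your overall architecture (degree bounds to prune the subgroup list, then Clifford theory on unipotent radicals for the survivors) matches the paper's, but several of the concrete steps as you describe them do not work. First, the positive direction for $(P_2,\wh{\alpha}_2)$ rests on a false statement: the unipotent radical $Z_2$ of $P_2$ is not the translation group of the totally singular $2$-space $W$ but the elementary abelian group of order $q^3$ of symmetric $2\times 2$ matrices, and the Levi $GL_2(q)$ is \emph{not} transitive on its nontrivial characters --- there are three nontrivial orbits $\mathcal{O}_1,\mathcal{O}_2^+,\mathcal{O}_2^-$ of sizes $q^2-1$, $q(q^2-1)/2$, $q(q-1)^2/2$, indexed by the rank and type of the associated quadratic form. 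Moreover $\alpha_2(1)=q(q-1)^2/2$, which is not the dimension of the group algebra of $W$. What is actually needed (and what the paper proves in \prettyref{thm:alpha2irredP2} by evaluating $\alpha_2$ on the unipotent classes $A_{31},A_2,A_{32}$) is that $\alpha_2|_{Z_2}$ equals the single orbit sum $\omega_2^-$ and that $|\mathcal{O}_2^-|=\alpha_2(1)$; your sketch neither identifies the orbit nor verifies this. For the converse over $P_2$ one needs that any $\chi$ irreducible on $P_2$ restricts to $Z_2$ as a multiple of one orbit sum, hence satisfies condition $\mathcal{W}_2^\pm$ and is a Weil character by \cite[Theorem 1.2]{TiepGuralnick04}, and then a computation of $\chi|_{Z_2}$ for each of the remaining Weil characters showing that more than one orbit occurs. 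For $P_1$ your ``non-transitivity'' criterion fails outright: the Levi of $P_1$ \emph{is} transitive on the nontrivial characters of the long-root subgroup $Z_1$, and the paper instead must invoke \prettyref{thm:thm1.6fromtiep} (long-root subgroups have nonzero fixed points on any nontrivial module) to force $Z_1\le\ker V$ and contradict simplicity.

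Second, the pruning step is too weak. The crude inequality $\chi(1)^2\le |H|$ does not eliminate $O_4^\pm(q)$, $Sp_2(q)\wr S_2$, or $Sp_2(q^2){:}2$ when $\chi$ is a Weil character (for instance $|O_4^+(q)|=2q^2(q^2-1)^2$ comfortably exceeds $\alpha_2(1)^2$ for all $q$), and these groups have no unipotent radical, so your Clifford-theoretic endgame has nothing to act on there. The paper uses the sharper bound $\mathfrak{m}(H)\le 2(q+1)^2$ or $2(q^2+1)$ (Clifford theory over the normal subgroup of index $2$), which disposes of these cases for $q\ge 8$, and then settles $q=4$ by explicit computation with the Weil character values and GAP. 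Your subgroup enumeration is also incomplete: it omits the torus normalizers $C_{q\pm1}^2{:}D_8$ and $C_{q^2+1}{:}4$ and the subgroup $[q^4]{:}C_{q-1}^2$ appearing in the paper's list from \cite{flesner} and \cite{roneyholtbray}; these are easy to rule out, but they must appear. Finally, retaining only $O_4^+(q)$ while silently dropping $O_4^-(q)$ and the imprimitive and extension-field subgroups from your list of survivors is inconsistent with the bound you propose to use.
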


\begin{theorem}\label{thm:mainresultSp4(2)}
Let $(G,V,H)$ be a triple as in \prettyref{prob:aschbakerscottprogram}, with $\ell\neq 2$, $G=Sp_{4}(2)\cong S_6$, and $H<G$ a maximal subgroup.  Then one of the following situations holds:
\begin{enumerate}
\item $H=A_6$, 
\item $H=A_5.2=S_5$, 
\item $H=O_4^-(2)\cong S_5=A_6.2_1M3$ in the notation of \cite{GAPctlib}.
\end{enumerate}
Moreover, each of the above situations indeed gives rise to such a triple $(G,V,H)$.  

\end{theorem}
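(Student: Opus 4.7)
The plan is to reduce this to a finite computation in the character tables of $G$ and its maximal subgroups. Since $|G|=720=2^4\cdot 3^2\cdot 5$, the only primes $\ell\neq 2$ of interest are $\ell\in\{0,3,5\}$, and the conjugacy classes of maximal subgroups of $S_6$ are well known: the index-$2$ subgroup $A_6$, two non-conjugate copies of $S_5$ (the natural point stabilizer $A_5.2=S_5$ and the transitive embedding $O_4^-(2)$), the intransitive subgroup $S_4\times S_2$, and the imprimitive wreath products $S_3\wr S_2$ and $S_2\wr S_3$. The theorem amounts to showing that only the first three give rise to an irreducible restriction, and that each of those actually does.

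For each such $H$ and each $\chi\in\ibr_\ell(G)$ with $\chi(1)>1$, I would test the condition $\langle\chi|_H,\chi|_H\rangle_H=1$ using the class-fusion data supplied by the \textsc{GAP} character-table library. The elementary bound $\chi(1)\le\sqrt{|H|}$ already eliminates almost all pairs: for $H\in\{S_4\times S_2,\;S_2\wr S_3\}$ only $\chi(1)\le 6$ survives, and for $H=S_3\wr S_2$ only $\chi(1)\le 8$, so in each case the list reduces to the trivial, sign, and $5$-dimensional ordinary characters (together with their $\ell$-modular reductions); a direct inner-product computation on this short list rules out all three of these maximal subgroups in every characteristic.

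The index-$2$ subgroup $A_6$ is handled by the Clifford-theoretic observation that $\chi|_{A_6}$ is irreducible iff $\chi\otimes\mathrm{sgn}\neq\chi$; the only self-conjugate partition of $6$ is $(3,2,1)$, so every non-trivial ordinary irreducible character of $S_6$ except the $16$-dimensional one restricts irreducibly to $A_6$, and the same conclusion is preserved modulo $\ell=3,5$ by the known decomposition matrices. For each of the two copies of $S_5$, the branching rule provides explicit examples: for the natural $S_5$, the irreducibles $\chi_{(3,3)}$ and $\chi_{(2,2,2)}$ of $S_6$ restrict to the two $5$-dimensional irreducibles of $S_5$, and the outer automorphism of $S_6$ transports these examples to the transitive $S_5\cong O_4^-(2)$. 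Modulo $\ell\in\{3,5\}$ the analogous check against the Brauer tables of $S_6$ and $S_5$ yields the same conclusion.

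The one real obstacle is bookkeeping: distinguishing the two non-conjugate classes of $S_5$ under the class fusions, and correctly identifying the $\ell$-modular Brauer characters and decomposition numbers for $\ell\in\{3,5\}$, where $S_6$ has non-trivial decomposition behavior. All required data is available in the \textsc{GAP} character-table library, and no theoretical input beyond Clifford theory and the trivial degree bound is needed; the entire argument is a bounded finite check.
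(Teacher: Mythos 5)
Your proposal is correct and follows essentially the same route as the paper: Section 8 handles $Sp_4(2)\cong S_6$ by running through all six maximal subgroups with the ordinary and Brauer character tables and the \texttt{PossibleClassFusions} command in the \textsc{GAP} character table library, exactly the bounded finite check you describe (with $\ell\in\{0,3,5\}$). Your added hand-verifications --- the degree bound $\chi(1)\le\sqrt{|H/Z(H)|}$ to discard $S_4\times S_2$, $S_3\wr S_2$, $S_2\wr S_3$, the Clifford-theoretic criterion $\chi\not\cong\chi\otimes\mathrm{sgn}$ for $A_6$, and the branching of $\chi_{(3,3)}$, $\chi_{(2,2,2)}$ to the two copies of $S_5$ --- are all sound and merely make explicit what the paper delegates to the computer.
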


\begin{theorem}\label{thm:mainresultSp6(2)}
Let $(G,V,H)$ be a triple as in \prettyref{prob:aschbakerscottprogram}, with $\ell\neq 2$, $G=Sp_{6}(2)$, and $H<G$ a maximal subgroup.  Then one of the following situations holds:
\begin{enumerate}
\item $H=G_2(2)=U_3(3).2$, and
\begin{itemize}
\item $\ell=0,5,7$ and $V$ affords the Brauer character $\wh{\alpha}_3$,  $\wh{\zeta}_3^1$,  $\wh{\rho}_3^1-\left\{\begin{array}{cc} 1, & \ell=7\\ 0, & otherwise\end{array}\right.,$ or $\wh{\chi}_9$, where $\chi_9$ is the unique irreducible complex character of $Sp_6(2)$ of degree $56$.
\item $\ell=3$ and $V$ affords the Brauer character $\wh{\alpha}_3$ or $\wh{\rho_3}^1$.
\end{itemize}
\item  $H=O_6^-(2)\cong U_4(2).2$, and the Brauer character afforded by $V$ is the Weil character $\widehat{\beta_3}$.
\item  $H=O_6^+(2)\cong L_4(2).2\cong A_8.2$, and the Brauer character afforded by $V$ is either the Weil character $\wh{\alpha_3}$, the character $\wh{\chi}_7$ where $\chi_7$ is the unique irreducible character of degree $35$ which is not equal to $\rho_3^2$, or the character $\wh{\chi_4}$ where $\chi_4$ is the unique irreducible character of degree $21$ which is not equal to $\zeta_3^1$.
\item  $H=2^6:L_3(2)$, and the Brauer character afforded by $V$ is $\wh{\alpha}_3$ or $\wh{\chi_4}$ where $\chi_4$ is the unique irreducible character of $G$ of degree $21$ which is not equal to $\zeta_3^1$.
\item  $H=L_2(8).3$, and $V$ affords one of the Brauer characters:
\begin{itemize}
\item $\wh{\alpha}_3$,
\item $\wh{\zeta}_3^1$, $\quad\ell\neq3,$
 \item $\wh{\rho}_3^1,\quad \ell\neq 7,$
 or\item $\wh{\chi_4}$ where $\chi_4$ is the unique irreducible complex character of $Sp_6(2)$ of degree $21$ which is not equal to $\zeta_3^1$, $\quad \ell\neq 3$.
\end{itemize}
\end{enumerate}
Moreover, each of the above situations indeed gives rise to such a triple $(G,V,H)$.
\end{theorem}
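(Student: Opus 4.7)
The plan is a finite computational verification. Since $G=Sp_6(2)$ has order $2^9\cdot 3^4\cdot 5\cdot 7$, the relevant primes are $\ell\in\{0,3,5,7\}$, and for each such $\ell$ the set $\Ibr_\ell(G)$ is finite. The ATLAS lists every conjugacy class of maximal subgroups of $G$, and \cite{GAPctlib} stores the ordinary and $\ell$-modular character tables of $G$ and of each maximal subgroup together with the relevant class fusions. Note that \prettyref{thm:lowdimreps} only applies for $q\geq 4$, so the classification of $\Ibr_\ell(G)$ here has to be read directly from the tables rather than from the generic formulas, but the Weil-character labels $\wh{\alpha}_3,\wh{\beta}_3,\wh{\rho}_3^1,\wh{\rho}_3^2,\wh{\zeta}_3^1$ of \cite{TiepGuralnick04} are defined uniformly in $q$ and can be matched to the ATLAS characters by degree and restriction to a natural parabolic.

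The strategy is: for each maximal subgroup $H\le G$ (up to conjugacy), for each $\ell$, and for each $\chi\in\Ibr_\ell(G)$, compute $\langle\chi|_H,\chi|_H\rangle_H$ via the stored fusion map. The restriction $V|_H$ is absolutely irreducible exactly when this inner product equals $1$. The positive outcomes are then identified with the Weil characters of \cite{TiepGuralnick04} and the ordinary characters $\chi_j$ of \cite{white2000} appearing in the statement; the negative outcomes establish that the remaining classes of maximal subgroups (the parabolics $P_1,P_2$, and the ATLAS $\mathcal{C}_2$-type subgroups such as $S_3\times S_6$) never support an irreducible restriction. The five listed cases in the theorem then follow by reading off the table of inner products.

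The only substantive obstacle is bookkeeping. One must translate among the naming conventions of \cite{TiepGuralnick04}, \cite{white2000}, and \cite{GAPctlib}: in particular, one must distinguish the Weil character $\zeta_3^1$ of degree $21$ from the unipotent character $\chi_4$ of the same degree, and likewise $\rho_3^2$ from $\chi_7$ among the two irreducible characters of degree $35$, because both members of each pair appear in the conclusion with different restriction behavior to different subgroups. For the modular checks one must also track the relevant decomposition numbers, e.g.\ that $\wh{\rho}_3^1$ requires the $-1_G$ adjustment when $\ell=7$ (consistent with the convention of \prettyref{thm:lowdimreps}), and that the degree-$56$ character $\chi_9$ remains irreducible modulo $\ell=3,5,7$ so that $\wh{\chi}_9$ is well defined. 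Once these labels are pinned down, a direct inner-product computation in each $(\ell,H,\chi)$ yields precisely the triples listed in cases (i)--(v), and confirms that every remaining pair has inner product at least $2$, completing the classification.
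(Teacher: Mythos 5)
Your overall strategy---a finite GAP computation over all pairs $(\ell,H)$ using the stored character tables and class fusions---is the same as the paper's, but the proposal as written has two genuine gaps.

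First, the computation you describe cannot actually be carried out for all maximal subgroups: the GAP character table library stores only the \emph{ordinary} character tables of $2^5:S_6$ and $2^6:L_3(2)$, not their $\ell$-modular tables. This is not a minor bookkeeping issue, because $2^6:L_3(2)=P_3$ is precisely the subgroup contributing case (4) of the theorem, where a positive example ($\wh{\chi}_4$ of degree $21$) must be \emph{established}, not just ruled out. The paper has to supplement the table lookup with a theoretical argument: every $H$-orbit on $\Irr(Z_3)\setminus\{1\}$ has size divisible by $7$, so any $\varphi\in\ibr_\ell(H)$ not containing $Z_3$ in its kernel has degree divisible by $7$; the stabilizers $\stab_{L_3}(\lambda)$ are solvable, so by Fong--Swan the relevant characters of the inertia groups lift, and a putative decomposition of $\wh{\chi}_4|_H$ would have to come from ordinary characters of degree $7$ or $14$, which is excluded by inspecting the ordinary table of $H$ on $\ell$-regular classes. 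Your proposal contains no substitute for this step. (For $2^5:S_6$ the paper similarly falls back on Lemma~\ref{lem:Lemma1forO6} and the fact that every sufficiently small Brauer character of $G$ is $\wh{\chi}$ or $\wh{\chi}-1$ for an ordinary $\chi$.)

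Second, your irreducibility criterion is wrong in positive characteristic: for $\ell>0$ the irreducible Brauer characters of $H$ are not orthonormal with respect to the scalar product on $\ell$-regular classes, so $\langle\chi|_H,\chi|_H\rangle_H=1$ neither implies nor is implied by irreducibility of $V|_H$. One must instead express $\chi|_H$ as a non-negative integer combination of the elements of $\ibr_\ell(H)$ (equivalently, pair against the projective indecomposable characters) and check that exactly one irreducible occurs with multiplicity one---which again presupposes knowledge of $\ibr_\ell(H)$ and so runs into the first gap for the two subgroups above. With these two points repaired, the rest of your plan (including the labelling care you flag for the degree-$21$ and degree-$35$ pairs) matches the paper's argument.
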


We note that unlike the case $q\geq 4$, we do not discuss the descent to non-maximal proper subgroups of $Sp_6(2)$ in \prettyref{thm:mainresultSp6(2)}, as there are many examples of such triples in this case.

We begin in \prettyref{sec:prelimobservations} by making some preliminary observations and listing some useful facts before proving \prettyref{thm:lowdimreps} in \prettyref{sec:lowdimreps}. In the remaining sections, we prove \prettyref{thm:mainresult} and \prettyref{thm:mainresultSp4}, first making a basic reduction to rule out a few subgroups, then treating each remaining maximal subgroup $H$ separately to find all irreducible $G-$modules $V$ which restrict irreducibly to $H$.  Finally, in \prettyref{sec:caseqis2} we treat the case $q=2$ and prove Theorems \ref{thm:mainresultSp4(2)} and \ref{thm:mainresultSp6(2)}. 
\section{Some Preliminary Observations}\label{sec:prelimobservations}

We adapt the notation of \cite{KleidmanLiebeck} for the finite groups of Lie type.  In particular, $L_n(q)$ and $U_n(q)$ will denote the groups $PSL_n(q)$ and $PSU_n(q)$, respectively.  $O^+_{2n}(q)$ and $O^-_{2n}(q)$ will denote the general orthogonal groups corresponding to quadratic forms of Witt defect 0 and 1, respectively.

Given a finite group $X$, we denote by $\mathfrak{d}_\ell(X)$ the smallest degree larger than one of absolutely irreducible representations of $X$ in characteristic $\ell$.  Similarly, $\mathfrak{m}_\ell(X)$ denotes the largest such degree.  When $\ell=0$, we write $\mathfrak{m}_0(X)=:\mathfrak{m}(X)$.   Given $\chi$ a complex character of $X$, we denote by $\widehat{\chi}$ the restriction of $\chi$ to $\ell$-regular elements of $X$, and we will say a Brauer character $\varphi$ lifts if $\varphi=\wh{\chi}$ for some complex character $\chi$.  Throughout the paper, $\ell$ will usually denote the characteristic of the representation.

As usual, $\irr(X)$ will denote the set of irreducible ordinary characters of $X$ and $\ibr_\ell(X)$ will denote the set of irreducible $\ell$-Brauer characters of $X$.  Given a subgroup $Y$ and a character $\lambda\in\ibr_\ell(Y)$, we will use $\ibr_\ell(X|\lambda)$ to denote the set of irreducible Brauer characters of $X$ which contain $\lambda$ as a constituent when restricted to $Y$.  The restriction of the Brauer character $\varphi$ to $Y$ will be written $\varphi|_Y$, and the induction of $\lambda$ to $X$ will be written $\lambda^X$.  We will use the notation $\ker\varphi$ to denote the kernel of the representation affording $\varphi\in\ibr_\ell(X)$.

We begin by making a few general observations, which we will sometimes use without reference:

\begin{lemma}\label{lem:maxmindegrees}
Let $G$ be a finite group, $H< G$ a proper subgroup, $\F$ an algebraically closed field of characteristic $\ell\geq0$, and $V$ an irreducible $\F G$-module with dimension greater than $1$.  Further, suppose that the restriction $V|_H$ is irreducible.  Then
\[\sqrt{|H/Z(H)|}\geq \mathfrak{m}(H)\geq\mathfrak{m}_\ell(H)\geq \dim(V)\geq\mathfrak{d}_\ell(G).\]
\end{lemma}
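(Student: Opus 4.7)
The plan is to prove the four inequalities in the chain separately, working from right to left, as each one invokes a different fact. The two rightmost inequalities are essentially immediate from the definitions: since $V$ is an irreducible $\F G$-module of dimension greater than $1$, the inequality $\dim V \geq \mathfrak{d}_\ell(G)$ is just the definition of $\mathfrak{d}_\ell(G)$, and since $V|_H$ is assumed irreducible as an $\F H$-module, $\dim V \leq \mathfrak{m}_\ell(H)$ is also immediate.

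Next I would handle the comparison $\mathfrak{m}(H) \geq \mathfrak{m}_\ell(H)$ between the maximal ordinary and maximal $\ell$-modular degrees. The key input is the decomposition matrix: for each $\varphi \in \ibr_\ell(H)$, every column contains a positive entry, so there exists $\chi \in \irr(H)$ with $d_{\chi\varphi} \geq 1$. Writing $\widehat{\chi} = \sum_{\varphi'} d_{\chi\varphi'}\varphi'$ and evaluating at $1$ yields $\chi(1) = \sum_{\varphi'} d_{\chi\varphi'}\varphi'(1) \geq \varphi(1)$. Taking $\varphi$ to achieve $\mathfrak{m}_\ell(H)$ gives the desired inequality. (For $\ell=0$ there is nothing to prove, since $\mathfrak{m}_0(H) = \mathfrak{m}(H)$ by convention.)

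The leftmost inequality $\sqrt{|H/Z(H)|} \geq \mathfrak{m}(H)$ is the classical bound $\chi(1)^2 \leq [H:Z(H)]$ for every $\chi \in \irr(H)$. The standard proof goes as follows: since $Z(H)$ is central, $\chi|_{Z(H)} = \chi(1)\lambda$ for some linear character $\lambda$ of $Z(H)$. By Frobenius reciprocity,
\[
\langle \lambda^H, \chi \rangle_H = \langle \lambda, \chi|_{Z(H)} \rangle_{Z(H)} = \chi(1),
\]
so $\chi$ appears as a constituent of $\lambda^H$ with multiplicity $\chi(1)$. Comparing degrees,
\[
[H:Z(H)] = \lambda^H(1) \geq \chi(1) \cdot \chi(1) = \chi(1)^2,
\]
as desired; taking $\chi$ to realize $\mathfrak{m}(H)$ finishes the step.

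There is no real obstacle here: every step is a standard character-theoretic manipulation, and the lemma functions mainly as a convenient packaging of four well-known facts that will later be applied repeatedly to rule out candidates for $H$ by degree comparison. If any step warrants care, it is the assertion that each column of the decomposition matrix is nonzero (so that every Brauer character is a constituent of some ordinary character restricted to $\ell$-regular classes); this is a general property of modular representation theory and can simply be cited.
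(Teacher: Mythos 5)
Your proof is correct, and the paper itself offers no proof of this lemma --- it is listed among the ``general observations, which we will sometimes use without reference'' --- so your argument simply supplies the standard justification the paper takes for granted. All four steps (the definitions of $\mathfrak{d}_\ell$ and $\mathfrak{m}_\ell$, the nonvanishing of decomposition-matrix columns, and the bound $\chi(1)^2\leq [H:Z(H)]$ via induction of a linear character from the center) are the expected ones and are carried out accurately.
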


\begin{lemma}\label{lem:ifacharacterlifts}
Let $\chi\in\irr(G)$ such that $\wh{\chi}|_H\in\ibr_\ell(H)$.  Then $\chi|_H\in\irr(H)$.
\end{lemma}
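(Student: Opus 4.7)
The plan is to prove the contrapositive directly, by writing $\chi|_H$ as a sum of ordinary irreducibles of $H$ and then passing to the Brauer character level to see that a nontrivial decomposition would force $\wh{\chi}|_H$ to be reducible. The key ingredient is that decomposition numbers are non-negative integers, so no cancellation can occur.

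Concretely, I would write $\chi|_H=\sum_i a_i\psi_i$, where $\psi_i\in\irr(H)$ are pairwise distinct and the $a_i$ are positive integers. Restricting to $\ell$-regular elements of $H$ gives
\[
\wh{\chi}|_H=\sum_i a_i\wh{\psi}_i=\sum_j\Bigl(\sum_i a_i d_{ij}\Bigr)\varphi_j,
\]
where $\wh{\psi}_i=\sum_j d_{ij}\varphi_j$ is the decomposition of $\wh{\psi}_i$ in terms of $\ibr_\ell(H)=\{\varphi_j\}$, with $d_{ij}\in\Z_{\geq 0}$. Since each $\psi_i$ is a nonzero ordinary character, $\wh{\psi}_i$ is a nonzero Brauer character, so for each $i$ there exists at least one $j$ with $d_{ij}\geq 1$. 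By hypothesis $\wh{\chi}|_H=\varphi_{j_0}$ for a single $j_0$, so comparing coefficients yields $\sum_i a_i d_{ij}=0$ for $j\ne j_0$ and $\sum_i a_i d_{ij_0}=1$. Non-negativity then forces $d_{ij}=0$ for $j\ne j_0$ and hence $d_{ij_0}\geq 1$ for each $i$ appearing, so $\sum_i a_i d_{ij_0}\geq \sum_i a_i$. The equation $\sum_i a_i d_{ij_0}=1$ therefore admits exactly one index $i_0$, with $a_{i_0}=d_{i_0j_0}=1$, showing $\chi|_H=\psi_{i_0}\in\irr(H)$.

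There is really no substantial obstacle here; the statement is essentially a bookkeeping observation about the decomposition matrix restricted to $H$. The only thing one must be careful about is the non-negativity of decomposition numbers and of the multiplicities $a_i$, which together preclude any cancellation when passing from $\chi|_H$ to $\wh{\chi}|_H$. The lemma will be used repeatedly in later sections to transfer known irreducible restrictions of ordinary characters into irreducible restrictions of their reductions modulo $\ell$.
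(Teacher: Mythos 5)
Your argument is correct and complete: the paper states this lemma without proof (it is listed among the ``general observations''), and your bookkeeping with the non-negativity of the decomposition numbers $d_{ij}$ and the multiplicities $a_i$ is exactly the standard argument one would supply. The one implicit step worth noting is that $(\wh{\chi})|_H=\wh{(\chi|_H)}$ and that each $\wh{\psi}_i$ is nonzero because $\wh{\psi}_i(1)=\psi_i(1)>0$, both of which you use correctly.
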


\begin{lemma}\label{lem:Lemma1forO6}
Let $G$ be a finite group, $H\leq G$ a subgroup, and $\ell$ a prime.  Let $\widehat{H}$ denote the set of irreducible complex characters of degree $1$ of $H$.  If $\chi\in\irr(G)$ such that $\chi|_H-\lambda\not\in \irr(H)$ for any $\lambda\in\widehat{H}\cup\{0\}$, then $\widehat{\chi}|_H-\mu\not\in\ibr_\ell(H)$ for any $\mu\in\ibr_\ell(H)$ of degree $1$.
\end{lemma}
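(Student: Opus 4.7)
The plan is to argue by contradiction. Suppose $\widehat{\chi}|_H = \mu + \psi$ for some linear $\mu\in\ibr_\ell(H)$ and some $\psi\in\ibr_\ell(H)$, and decompose
\[\chi|_H = \sum_{i} a_i \chi_i\]
into distinct complex irreducibles $\chi_i\in\irr(H)$, with positive integer multiplicities $a_i$. Restriction to $\ell$-regular elements, combined with the decomposition numbers $d_{ik}$ of $H$, yields
\[\mu + \psi \;=\; \widehat{\chi}|_H \;=\; \sum_i a_i\widehat{\chi}_i \;=\; \sum_k\Bigl(\sum_i a_i d_{ik}\Bigr)\phi_k,\]
where the $\phi_k$ run over $\ibr_\ell(H)$.

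The next step is to extract rigid constraints by comparing coefficients on the two sides. For every $\phi_k\notin\{\mu,\psi\}$, the coefficient $\sum_i a_i d_{ik}$ vanishes, and since all terms are non-negative integers with $a_i\geq 1$, this forces $d_{ik}=0$ for every index $i$ appearing in $\chi|_H$. Hence each $\widehat{\chi}_i$ is supported entirely on $\mu$ and $\psi$. The two equalities $\sum_i a_i d_{i\mu}=1$ and $\sum_i a_i d_{i\psi}=1$ then single out a unique $i_1$ with $a_{i_1}=d_{i_1\mu}=1$ and a unique $i_2$ with $a_{i_2}=d_{i_2\psi}=1$. No other index can appear, since such a $\chi_i$ would have $\widehat{\chi}_i=0$, which is impossible as the reduction of a nonzero complex character (degrees are preserved).

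Finally, I would split into two cases. If $i_1=i_2$, then $\chi|_H = \chi_{i_1}\in\irr(H)$, contradicting the hypothesis with $\lambda=0$. If $i_1\neq i_2$, then $\chi|_H = \chi_{i_1}+\chi_{i_2}$ with $\widehat{\chi}_{i_1}=\mu$; hence $\chi_{i_1}(1)=\mu(1)=1$, so $\chi_{i_1}\in\widehat{H}$, and $\chi|_H-\chi_{i_1}=\chi_{i_2}\in\irr(H)$, again contradicting the hypothesis. The only subtle point that the argument actually uses is that the Brauer character of a nonzero complex character is never identically zero; this is what makes the coefficient comparison forceful enough to pin the decomposition of $\chi|_H$ down to at most two constituents and to identify the smaller one as a linear lift of $\mu$. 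Everything else is bookkeeping in the decomposition matrix of $H$.
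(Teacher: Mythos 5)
The paper states this lemma without proof (it appears among the ``general observations'' in Section 2), so there is no argument of record to compare yours against; judged on its own terms, your proof is correct. The coefficient comparison in the decomposition matrix is exactly the right tool: linear independence of $\ibr_\ell(H)$ identifies each coefficient $\sum_i a_i d_{ik}$, the vanishing ones kill every column outside $\{\mu,\psi\}$, and the two coefficients equal to $1$ isolate at most two constituents $\chi_{i_1},\chi_{i_2}$ of $\chi|_H$, each with multiplicity one; the remark that $\widehat{\chi}_i\neq 0$ for a nonzero ordinary character correctly rules out any further constituents. Both terminal cases --- $i_1=i_2$ forcing $\chi|_H\in\irr(H)$, and $i_1\neq i_2$ forcing $\widehat{\chi}_{i_1}=\mu$ (since $d_{i_1\psi}=0$ when $i_1\neq i_2$), hence $\chi_{i_1}\in\widehat{H}$ and $\chi|_H-\chi_{i_1}=\chi_{i_2}\in\irr(H)$ --- contradict the hypothesis exactly as you say. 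The one point you should add is the degenerate possibility $\psi=\mu$, i.e.\ $\widehat{\chi}|_H=2\mu$, where your ``two equalities'' collapse to the single equation $\sum_i a_i d_{i\mu}=2$. The same bookkeeping then shows $\chi|_H$ is either a single irreducible of degree $2$ or a sum of two linear complex characters, and either alternative again contradicts the hypothesis, so nothing is lost --- but the case deserves a sentence rather than silent exclusion.
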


Lemmas \ref{lem:ifacharacterlifts} and \ref{lem:Lemma1forO6} suggest that in some situations, we will be able to reduce to the case of ordinary representations.

\subsection{Some Relevant Deligne-Lusztig Theory}

Let $G=\underline{G}^F$ for a connected reductive algebraic group $\underline{G}$ in characteristic $p\neq\ell$ and a Frobenius map $F$, and write $G^\ast=(\underline{G}^\ast)^{F^\ast}$, where $(\underline{G}^\ast, F^\ast)$ is dual to $(\underline{G}, F)$.   We can write $\irr(G)$ as a disjoint union $\bigsqcup \mathcal{E}(G, (s))$ of rational Lusztig series corresponding to $G^\ast$- conjugacy classes of semisimple elements $s\in G^\ast$.  Recall that the characters in the series $\mathcal{E}(G, (1))$ are called unipotent characters, and there is a bijection $\mathcal{E}(G,(s))\leftrightarrow\mathcal{E}(C_{G^\ast}(s), (1))$ such that if $\chi\mapsto \psi$, then $\chi(1)=[G^\ast:C_{G^\ast}(s)]_{p'}\psi(1)$.

Let $t$ be a semisimple $\ell'$ - element of $G^\ast$ and write $\mathcal{E}_\ell(G,(t)):=\bigcup\mathcal{E}(G,(ut)),$ where the union is taken over all $\ell$-elements $u$ in $C_{G^\ast}(t)$.  By a fundamental result of Brou{\'e} and Michel \cite{brouemichel}, $\mathcal{E}_\ell(G,(t))$ is a union of $\ell$-blocks.  Hence, we may view $\mathcal{E}_\ell(G,(t))$ as a collection of $\ell$-Brauer characters as well as a set of ordinary characters.

Moreover, it follows (see, for example \cite[Proposition 1]{hissmalle}) that the degree of any irreducible Brauer character $\varphi\in\mathcal{E}_\ell(G,(t))$ is divisible by $[G^\ast:C_{G^\ast}(t)]_{p'}$.  Hence, if $\chi\in\mathcal{E}_\ell(G,(t))\cap\irr(G)$ and $\chi(1)=[G^\ast:C_{G^\ast}(t)]_{p'}$, then $\wh{\chi}$ is irreducible.  Furthermore, if $H$ is a subgroup of $G$ such that the restriction $\varphi|_H$ to $H$ is irreducible, and $[G^\ast:C_{G^\ast}(t)]_{p'}>\mathfrak{m}_\ell(H)$, then $\varphi$ cannot be a member of $\mathcal{E}_\ell(G, (t))$.  Also, any irreducible Brauer character in $\mathcal{E}_\ell(G, (t))$ appears as a constituent of the restriction to $\ell$-regular elements for some ordinary character in $\mathcal{E}(G, (t))$ (see \cite[Theorem 3.1]{hissregssblocks}), so $\mathcal{E}_\ell(G, (1))$ is a union of unipotent blocks.  In particular, if $\varphi|_H$ is irreducible and  $[G^\ast:C_{G^\ast}(t)]_{p'}>\mathfrak{m}_\ell(H)$ for all nonidentity semisimple $\ell'$- elements $t$ of $G^\ast$, then $\varphi$ must belong to a unipotent block.

In \cite{bonnaferouquier}, Bonnaf{\'e} and Rouquier show that when $C_{\underline{G}^\ast}(t)$ is contained in an $F^\ast$-stable Levi subgroup, $\underline{L}^\ast$, of $\underline{G}^\ast$, then Deligne-Lusztig induction $R_L^G$ yields a Morita equivalence between $\mathcal{E}_\ell(G,(t))$ and $\mathcal{E}_\ell(L,(t))$, where $L=(\underline{L})^{F}$ and $(\underline{L}, F)$ is dual to $(\underline{L}^\ast, F^\ast)$.  This fact will be very important in what follows.

Note that when $G=Sp_6(q)$, $q$ even, with $G=\underline{G}^F$ and $(\underline{G}^\ast, F^\ast)$ in duality with $(\underline{G}, F)$, each semisimple conjugacy class $(s)$ of $G^\ast=(\underline{G}^\ast)^{F^\ast}$ satisfies that $|s|$ is odd.  Hence by  \cite[Lemma 13.14(iii)]{dignemichel}, the centralizer $C_{\underline{G}^\ast}(s)$ is connected.

\begin{lemma}\label{lem:moritaequiv}
Let $G^\ast=Sp_6(q)$, $q$ even, with $G=\underline{G}^F$ and $(\underline{G}^\ast, F^\ast)$ in duality with $(\underline{G}, F)$.  The nontrivial semisimple conjugacy classes $(s)$ of $G^\ast$ each satisfy $C_{\underline{G}^\ast}(s)=\underline{L}^\ast$ for an $F^\ast$-stable Levi subgroup $\underline{L}^\ast$ of $\underline{G}^\ast$ with $C_{G^\ast}(s)=(\underline{L}^\ast)^{F^\ast}=:L^\ast$.  In particular, Bonnaf{\'e}-Rouquier's theorem \cite{bonnaferouquier} implies that there is a Morita equivalence $\mathcal{E}_\ell(G,(t))\leftrightarrow\mathcal{E}_\ell(L,(1))$ given by Deligne-Lusztig induction when $t\neq 1$ is a semisimple $\ell'$-element, where $L=(\underline{L})^{F}$ and $(\underline{L}, F)$ is dual to $(\underline{L}^\ast, F^\ast)$.
\end{lemma}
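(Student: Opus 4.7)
The plan is as follows. As already noted in the paragraph preceding the lemma, every semisimple element $s \in G^\ast$ has odd order (since $G^\ast = Sp_6(q)$ with $q$ even), so $C_{\underline{G}^\ast}(s)$ is connected by \cite[Lemma 13.14(iii)]{dignemichel}. It therefore suffices to identify this connected reductive centralizer as a Levi subgroup of $\underline{G}^\ast$: the $F^\ast$-stability of $\underline{L}^\ast := C_{\underline{G}^\ast}(s)$ is automatic from $F^\ast(s) = s$, the fixed-point equality $C_{G^\ast}(s) = (\underline{L}^\ast)^{F^\ast}$ follows from connectedness, and then Bonnaf\'e--Rouquier's theorem applies directly.

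To identify the centralizer, I would decompose the natural module $V = \overline{\F}_q^{\,6}$ into eigenspaces $V_a$ for $s$. The symplectic form pairs $V_a$ nondegenerately with $V_{a^{-1}}$ and is nondegenerate on $V_a$ when $a^2 = 1$. Because we are in characteristic $2$, the only element satisfying $a^2 = 1$ is $a = 1$, so apart from a (possibly zero) fixed piece $V_1$ of even dimension $2m_0$, the remaining eigenvalues split into genuine pairs $\{a_i, a_i^{-1}\}$ with $a_i \neq 1$. Setting $m_i := \dim V_{a_i}$, one obtains
$$C_{\underline{G}^\ast}(s) \;=\; GL_{m_1} \times \cdots \times GL_{m_k} \times Sp_{2m_0}, \qquad m_0 + m_1 + \cdots + m_k = 3.$$
This is precisely the shape of a standard Levi subgroup of $Sp_6$, arising as the Levi factor of the parabolic stabilizing an isotropic flag of dimensions $m_1, m_1{+}m_2, \ldots, m_1{+}\cdots{+}m_k$. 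Hence $C_{\underline{G}^\ast}(s) = \underline{L}^\ast$ is a Levi, as required.

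Given any nonidentity semisimple $\ell'$-element $t \in G^\ast$, the above shows $C_{\underline{G}^\ast}(t) = \underline{L}^\ast$ is itself an $F^\ast$-stable Levi of $\underline{G}^\ast$, so the hypotheses of \cite{bonnaferouquier} are satisfied trivially. Deligne--Lusztig induction $R_L^G$ then produces a Morita equivalence between $\mathcal{E}_\ell(G,(t))$ and $\mathcal{E}_\ell(L,(t))$; since $t$ lies in the centre of $L^\ast$, twisting by the linear character of $L$ afforded by $t$ further identifies $\mathcal{E}_\ell(L,(t))$ with $\mathcal{E}_\ell(L,(1))$, giving the Morita equivalence in the form stated. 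The only non-formal step is the identification of the centralizer as a Levi, which depends crucially on $q$ being even: in odd characteristic, semisimple elements with a $-1$-eigenspace contribute an $Sp_{2m}$-factor detached from the ambient symplectic form, and such centralizers need not be Levi subgroups of $Sp_6$.
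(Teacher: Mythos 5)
Your proposal is correct and follows essentially the same route as the paper: both arguments hinge on the observation that, since $q$ is even, $1$ is the only self-inverse eigenvalue of a semisimple element, so its centralizer in $\underline{G}^\ast$ is connected of Levi type, after which Bonnaf\'e--Rouquier together with the central-character twist (Digne--Michel, Prop.\ 13.30) yields the stated Morita equivalence. The only cosmetic difference is how the Levi property is certified: the paper exhibits the centralizer as $C_{\underline{G}^\ast}(S)$ for an $F^\ast$-stable torus $S$ of diagonal matrices with the same eigenvalue pattern, whereas you identify it explicitly as $GL_{m_1}\times\cdots\times GL_{m_k}\times Sp_{2m_0}$ via the eigenspace decomposition of the natural module.
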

\begin{proof}
Write $G^\ast=(\underline{G}^\ast)^{F^\ast}$, as above.  Direct calculation shows that for each semisimple element $s\neq 1$ of $G^\ast$, $C_{\underline{G}^\ast}(s)\leq C_{\underline{G}^\ast}(S)$ for some $F^\ast$-stable torus $S$ in $\underline{G}^\ast$ containing $s$.  (Each such $s$ is conjugate in $\underline{G}^\ast$ to a diagonal matrix $s'=gsg^{-1}$, $g\in\underline{G}^\ast$, whose centralizer in $\underline{G}^\ast$ depends only on the number of distinct entries different than $1$ and their multiplicities.  Hence we may choose $S$ to be $g^{-1}S'g$, where $S'$ is the torus consisting of all diagonal matrices in $\underline{G}^\ast$ with the same form as $s'$.)  Therefore, $C_{\underline{G}^\ast}(s)= C_{\underline{G}^\ast}(S)$, which is an $F^\ast$-stable Levi subgroup of $\underline{G}^\ast$.

Let $t$ be a semisimple $\ell'$-element of $G^\ast$.  Writing $\underline{L}^\ast=C_{\underline{G}^\ast}(t)$, we see that $t\in Z(\underline{L}^\ast)$ and therefore $t\in Z(L^\ast)$.  But then by \cite[Proposition 13.30]{dignemichel},  tensoring with a suitable linear character yields a Morita equivalence of $\mathcal{E}_\ell(L,(t))\leftrightarrow\mathcal{E}_\ell(L,(1))$.  Hence there is a Morita equivalence $\mathcal{E}_\ell(G,(t))\leftrightarrow\mathcal{E}_\ell(L,(t))\leftrightarrow\mathcal{E}_\ell(L,(1))$ by this fact and Bonnaf{\'e}-Rouquier's theorem \cite{bonnaferouquier}.
\end{proof}

\begin{proposition}\label{prop:jordandecompforibrs}
In the notation of \prettyref{lem:moritaequiv}, let $t$ be a semisimple $\ell'$-element of $G^\ast$.  Let $\theta\in\mathcal{E}_\ell(G,(t))$ be an irreducible Brauer character.  Then $\theta(1)=[G^\ast:C_{G^\ast}(t)]_{2'}\varphi(1)$ for some $\varphi\in \ibr_\ell(L)$ lying in a unipotent block of $L$.
\end{proposition}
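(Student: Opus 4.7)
The plan is to use the Morita equivalence of \prettyref{lem:moritaequiv} to transport $\theta$ to an irreducible Brauer character of $L$ and then compare degrees. Set $L^\ast:=C_{G^\ast}(t)$, so $t\in Z(L^\ast)$. Composing Deligne-Lusztig induction $R_L^G$ (giving $\mathcal{E}_\ell(L,(t))\leftrightarrow\mathcal{E}_\ell(G,(t))$ by Bonnaf{\'e}-Rouquier) with tensoring by a suitable linear character of $L$ (giving $\mathcal{E}_\ell(L,(1))\leftrightarrow\mathcal{E}_\ell(L,(t))$) yields a Morita equivalence $\mathcal{E}_\ell(G,(t))\leftrightarrow\mathcal{E}_\ell(L,(1))$, under which $\theta$ corresponds to some $\varphi\in\ibr_\ell(L)\cap\mathcal{E}_\ell(L,(1))$. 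Since $\mathcal{E}_\ell(L,(1))$ is a union of unipotent blocks of $L$ (as noted in the discussion before \prettyref{lem:moritaequiv}, applied to $L$ in place of $G$), this $\varphi$ lies in a unipotent block, settling the block-theoretic part of the conclusion.

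For the degree identity, I first reduce to the Deligne-Lusztig step: tensoring with a linear character preserves degrees, and $[G:L]_{p'}=[G^\ast:L^\ast]_{p'}=[G^\ast:C_{G^\ast}(t)]_{p'}=[G^\ast:C_{G^\ast}(t)]_{2'}$, using $|G|=|G^\ast|$, $|L|=|L^\ast|$, and $p=2$. So it suffices to show that, under the Bonnaf{\'e}-Rouquier correspondence between $\mathcal{E}_\ell(G,(t))$ and $\mathcal{E}_\ell(L,(t))$, paired irreducible Brauer characters have degrees related by the factor $[G:L]_{p'}$. For ordinary characters this is classical: the hypothesis $t\in Z(L^\ast)$ forces $C_{L^\ast}(t)=L^\ast$, so applying the Jordan-decomposition degree formula both in $G$ and in $L$ yields $\chi_G(1)=[G:L]_{p'}\chi(1)$ for Jordan-corresponding $\chi_G\in\mathcal{E}(G,(t))$ and $\chi\in\mathcal{E}(L,(t))$. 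I would then propagate this to Brauer characters via the preservation of decomposition matrices under Bonnaf{\'e}-Rouquier's equivalence: writing $\widehat{\chi}_G=\sum_id_{\chi_G,\theta_i}\theta_i$ and the parallel expression for $\widehat{\chi}$ with paired irreducible constituents $\theta_i\leftrightarrow\varphi'_i$ on the $L$-side, the system of identities $\chi_G(1)=[G:L]_{p'}\chi(1)$ as $\chi_G$ ranges over $\mathcal{E}(G,(t))$ furnishes a full-rank linear system on the Brauer degrees and thus forces $\theta_i(1)=[G:L]_{p'}\varphi'_i(1)$ termwise, which transfers to $\varphi$ via the degree-preserving tensoring step.

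I expect the main obstacle to be exactly this last promotion from the ordinary to the modular degree identity, since a Morita equivalence a priori only matches decomposition matrices up to relabelling and does not automatically determine individual dimensions of simples. Should the full-rank argument above require supplementing, an alternative is to appeal to the explicit Deligne-Lusztig bimodule realizing Bonnaf{\'e}-Rouquier's equivalence, whose construction directly implements a $[G:L]_{p'}$-scaling on dimensions; combining this with the divisibility $[G^\ast:C_{G^\ast}(t)]_{p'}\mid\theta(1)$ recorded just before \prettyref{lem:moritaequiv} would pin down the scaling factor as an equality rather than merely a multiple.
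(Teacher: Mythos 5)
Your proposal is correct and follows essentially the same route as the paper: transport $\theta$ to a unipotent block of $L$ via the Bonnaf\'e--Rouquier/linear-character Morita equivalences, use the classical degree formula $\chi^\ast(1)=[G:L]_{p'}\chi(1)$ for the ordinary characters, and then propagate to Brauer degrees through the preserved decomposition matrix. The paper implements your ``full-rank linear system'' step concretely by writing $\varphi_k=\sum_i a_{ki}\wh{\chi}_i$ with $\sum_i a_{ki}d_{ij}=\delta_{kj}$ (i.e.\ surjectivity of the decomposition map), which is exactly the argument you sketch.
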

\begin{proof}

From \prettyref{lem:moritaequiv}, Deligne-Lusztig induction $R_L^G$ provides a Morita equivalence between $\mathcal{E}_\ell(L,(1))$ and $\mathcal{E}_\ell(G,(t))$.  Hence $R_L^G$ gives a bijection between ordinary characters in $\mathcal{E}_\ell(L,(1))$ and $\mathcal{E}_\ell(G,(t))$ and also a bijection between $\ell$-Brauer characters in these two unions of blocks, which preserve the decomposition matrices for these two unions of blocks.

Let $B$ be a unipotent block in $L$, and let $\varphi_1,...,\varphi_m$ be the irreducible Brauer characters in $B$.  Let $\chi_1,...,\chi_s$ be the irreducible ordinary characters in $B$.  Then we can write $\wh{\chi}_i=\sum_{j=1}^m d_{ij}\varphi_j$, where $(d_{ij})$ is the decomposition matrix of the block $B$.  Writing $\psi^\ast$ for the image of an ordinary or Brauer character, $\psi$, of $L$ under Deligne-Lusztig induction $R_L^G$, we therefore also have $\wh{\chi}_i^\ast=\sum_{j=1}^m d_{ij}\varphi_j^\ast$.

Moreover, we may write $\varphi_k=\sum_{i=1}^s a_{ki}\wh{\chi}_i$ for some integers $a_{ki}$.  We claim that $\varphi_k^\ast=\sum_{i=1}^s a_{ki}\wh{\chi}_i^\ast$ as well.  Indeed,
\[\varphi_k=\sum_{i=1}^s a_{ki}\wh{\chi}_i=\sum_{i=1}^s a_{ki}\left(\sum_{j=1}^m d_{ij}\varphi_j\right)=\sum_{j=1}^m \varphi_j\left(\sum_{i=1}^s a_{ki}d_{ij}\right),\]
so $\sum_{i=1}^s a_{ki}d_{ij}=\delta_{kj}$ is the Kronecker delta by the linear independence of irreducible Brauer characters.  Now, \[\sum_{i=1}^s a_{ki}\wh{\chi}_i^\ast=\sum_{i=1}^s a_{ki}\left(\sum_{j=1}^m d_{ij}\varphi_j^\ast\right)=\sum_{j=1}^m\varphi_j^\ast\left(\sum_{i=1}^s a_{ki}d_{ij}\right)=\sum_{j=1}^m\varphi_j^\ast\delta_{kj}=\varphi_k^\ast,\] proving the claim.

Note that $\chi_i^\ast(1)=[G:L]_{2'}\chi_i(1)$ for $1\leq i\leq s$.  Letting $\theta=\varphi_k^\ast$, we can write $\theta=\sum_{i=1}^s a_{ki}\wh{\chi}_i^\ast$, and hence $\theta(1)=\sum_{i=1}^s a_{ki}\wh{\chi}_i^\ast(1)=[G:L]_{2'}\sum_{i=1}^s a_{ki}\wh{\chi}_i(1)=[G:L]_{2'}\varphi_k(1)=[G^\ast:C_{G^\ast}(t)]_{2'}\varphi_k(1)$, which completes the proof.

\end{proof}

While applying Deligne-Lusztig theory to $Sp_{2n}(q)$ with $q$ even, it is convenient to view $Sp_{2n}(q)$ as $SO_{2n+1}(q)\cong Sp_{2n}(q)$, so that $G^\ast = Sp_{2n}(q)$.

\begin{lemma}\label{lem:indexessemisimplecentralizersSp6}
Let $q\geq 4$ be even and let $s\in G^\ast=Sp_6(q)$ be a noncentral semisimple element.  Then either $[G^\ast:C_{G^\ast}(s)]_{2'}\geq (q-1)^2(q^2+1)(q^4+q^2+1),$ or $s$ is a member of one of the classes in \prettyref{tab:ssclasses}, which follows the notation of \cite{Luebeckthesis} and lists the classes in increasing order of $[G^\ast:C_{G^\ast}(s)]_{2'}$.  The table also lists the isomorphism class of $C_{G^\ast}(s)$.
\begin{table}
\caption{Semisimple Classes of $G^\ast=Sp_6(q)$ with Small $[G^\ast:C_{G^\ast}(s)]_{2'}$}
\label{tab:ssclasses}
\centering
\begin{tabular}{|c|c|c|}
  \hline
  Semisimple Class $(s)$& $[G^\ast:C_{G^\ast}(s)]_{2'}$ & $C_{G^\ast}(s)$\\
  \hline
  $c_{4,0}$ & $\frac{q^6-1}{q+1}$ & $Sp_4(q)\times GU_1(q)$\\
  \hline
  $c_{3,0}$ & $\frac{q^6-1}{q-1}$ & $Sp_4(q)\times GL_1(q)$\\
  \hline
  $c_{6,0}$ & $(q^2+1)(q-1)^2(q^2+q+1)$ & $GU_3(q)$\\
  \hline
  $c_{5,0}$ &  $(q^2+1)(q+1)^2(q^2-q+1)$ & $GL_3(q)$\\
  \hline
  $c_{10,0}$ & $(q-1)(q^2+1)(q^4+q^2+1)$ & $GU_2(q)\times Sp_2(q)$\\
  \hline
  $c_{8,0}$ & $(q+1)(q^2+1)(q^4+q^2+1)$ & $GL_2(q)\times Sp_2(q)$\\
\hline
\end{tabular}
\end{table}
\end{lemma}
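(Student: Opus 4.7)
The plan is to read off the semisimple conjugacy classes of $G^\ast = Sp_6(q)$ from L\"ubeck's tables \cite{Luebeckthesis} and verify the index bound class by class. Since $q$ is even, every semisimple element of $G^\ast$ has odd order, and its centralizer in $\underline{G}^\ast$ is connected (as noted in the paragraph preceding \prettyref{lem:moritaequiv}), so $C_{G^\ast}(s)$ is determined by the isomorphism type of $C_{\underline{G}^\ast}(s)$.

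First I would recall the general shape of such a centralizer: for semisimple $s \in Sp_{2n}(q)$, the centralizer decomposes according to the action of $s$ on the natural module as
$$C_{G^\ast}(s) \cong Sp_{2m}(q) \times \prod_i GL_{n_i}(q^{d_i}) \times \prod_j GU_{r_j}(q^{e_j}),$$
where the $Sp_{2m}(q)$-factor records the $1$-eigenspace (noting $-1 = 1$ in characteristic $2$) and the $GL/GU$ factors record Galois orbits of eigenspace pairs $\{\alpha,\alpha^{-1}\}$ with $\alpha\neq 1$. With $2n = 6$, the rank constraint leaves only finitely many possibilities, and the six listed in \prettyref{tab:ssclasses} are precisely those having a maximal $Sp_4$-factor ($c_{3,0},c_{4,0}$), a single rank-$3$ $GL/GU$-factor ($c_{5,0},c_{6,0}$), or an $Sp_2$-factor together with a rank-$2$ $GL/GU$-factor ($c_{8,0},c_{10,0}$).

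Next I would verify the table entries by direct computation using
$$|Sp_6(q)|_{2'} = (q^2-1)(q^4-1)(q^6-1) = (q-1)^3(q+1)^3(q^2+1)(q^2+q+1)(q^2-q+1)$$
together with the standard orders of the $GL$, $GU$, and $Sp$ factors. For example, $|GU_3(q)|_{2'} = (q+1)^3(q-1)(q^2-q+1)$, giving $[G^\ast : C_{G^\ast}(s)]_{2'} = (q-1)^2(q^2+1)(q^2+q+1)$ for $s$ in class $c_{6,0}$, in agreement with the table; the other five entries are analogous.

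Finally I would confirm that every remaining noncentral semisimple class satisfies $[G^\ast : C_{G^\ast}(s)]_{2'} \geq (q-1)^2(q^2+1)(q^4+q^2+1)$. The remaining centralizer types are obtained from those in the table by further splitting factors: for instance $Sp_2(q) \times GL_1(q^2)$, $Sp_2(q) \times GL_1(q) \times GU_1(q)$, $GU_2(q) \times GU_1(q)$, $GL_2(q) \times GL_1(q)$, $GL_1(q)^3$, $GU_1(q)^3$, $GL_1(q) \times GU_1(q)^2$, and so on. Each such split shrinks $|C|$, and a short calculation shows the resulting index exceeds the stated bound once $q \geq 4$. The main obstacle is not conceptual but purely the bookkeeping of enumerating all semisimple centralizer types and checking the bound uniformly; the threshold $(q-1)^2(q^2+1)(q^4+q^2+1)$ is delicate in that the comparison with the largest table entry $(q+1)(q^2+1)(q^4+q^2+1)$ from $c_{8,0}$ requires $(q-1)^2 > q+1$, which is precisely where the hypothesis $q \geq 4$ enters.
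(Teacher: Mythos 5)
Your proposal is correct and follows essentially the same route as the paper, whose entire proof is a one-line appeal to inspecting the semisimple classes and centralizer orders in L\"ubeck's thesis; you simply make the inspection explicit via the standard eigenspace decomposition of semisimple centralizers in $Sp_6(q)$, $q$ even, and the odd-part order computations. Your observation that the threshold is where $q\geq 4$ enters (via $(q-1)^2>q+1$) is accurate, and indeed the bound is attained with equality by the classes with centralizer $Sp_2(q)\times GU_1(q)^2$ and $GU_2(q)\times GU_1(q)$.
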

\begin{proof}
This is evident from inspection of the list of semisimple classes and the sizes of their centralizers in \cite[Tabelles 10 and 14]{Luebeckthesis}.
\end{proof}

\subsection{Other Notes on $Sp_6(q)$, $q$ even}

We note that $|Sp_6(q)|=q^9(q^2-1)(q^4-1)(q^6-1)$, so if $\ell$ is a prime dividing $|Sp_6(q)|$ and $\ell\neq 3$, then $\ell$ must divide exactly one of $q-1$, $q+1$, $q^2+1$, $q^2+q+1$, or $q^2-q+1$.  If $\ell=3$, then it divides $q-1$ if and only if it divides $q^2+q+1$, and it divides $q+1$ if and only if it divides $q^2-q+1$.  In what follows, it will often be convenient to distinguish between these cases.

D. White \cite{white2000} has calculated the decomposition numbers for the unipotent blocks of $Sp_6(q)$, $q$ even, up to a few unknowns in the case $\ell|(q+1)$.  For the convenience of the reader, we summarize these results in \prettyref{app:BrauerCharTable} by describing the $\ell$-Brauer characters for $Sp_6(q)$, $q$ even, lying in unipotent blocks. We give these descriptions in terms of the restrictions of ordinary characters to $\ell$-regular elements.

\section{Low-Dimensional Representations of $Sp_6(q)$}\label{sec:lowdimreps}
The purpose of this section is to prove \prettyref{thm:lowdimreps}.  We begin by introducing the Weil characters of $Sp_{2n}(q)$.

\subsection{Weil Characters of  $Sp_{2n}(q)$}

It is convenient to view $Sp_{2n}(q)$ as a subgroup of both $GL_{2n}(q)$ and $GU_{2n}(q)$.  In \cite{TiepGuralnick04}, Tiep and Guralnick describe the linear-Weil characters and unitary-Weil characters, which are irreducible characters of $Sp_{2n}(q)$ for $q$ even and $n\geq 2$ obtained by restriction from $GL_{2n}(q)$ and $GU_{2n}(q)$.   For the convenience of the reader, we recreate \cite[Table 1]{TiepGuralnick04} in \prettyref{tab:TiepGuralnickTable1}.
\begin{table}
\caption{Weil Characters of $Sp_{2n}(q)$ \cite[Table 1]{TiepGuralnick04}}
\label{tab:TiepGuralnickTable1}
\centering
\begin{tabular}{|c|c|c|}
  \hline
  Complex Linear &  & $\ell$-Modular Linear   \\
   Weil Characters & Degree & Weil Characters ($\ell\neq 2$)\\
  \hline
 $ \rho_n^1$ & $\frac{(q^n+1)(q^n-q)}{2(q-1)}$ & $\widehat{\rho}_n^1 - \left\{\begin{array}{cc}
                                                                             1, & \ell|\frac{q^n-1}{q-1},\\
                                                                             0, & \hbox{otherwise}
                                                                           \end{array}\right.$
  \\
  $\rho_n^2$ & $\frac{(q^n-1)(q^n+q)}{2(q-1)}$ & $\widehat{\rho}_n^2 - \left\{\begin{array}{cc}
                                                                             1, & \ell|(q^n+1),\\
                                                                             0, & \hbox{otherwise}
                                                                           \end{array}\right.$\\
  $\tau_n^i,$ & $\frac{q^{2n}-1}{q-1}$ & $\widehat{\tau}_n^i$ \\
  $1 \leq i \leq (q-2)/2$ & & $1\leq i\leq ((q-1)_{\ell'}-1)/2$\\
  \hline
  Complex Unitary &  & $\ell$-Modular Unitary   \\
   Weil Characters & Degree & Weil Characters ($\ell\neq 2$)\\\hline
  $\alpha_n$ & $\frac{(q^n-1)(q^n-q)}{2(q+1)}$ &$ \widehat{\alpha}_n$ \\
  $\beta_n$ & $\frac{(q^n+1)(q^n+q)}{2(q+1)} $ & $\widehat{\beta}_n- \left\{\begin{array}{cc}
                                                                             1, & \ell|(q+1),\\
                                                                             0, & \hbox{otherwise}
                                                                           \end{array}\right.$\\
  $\zeta_n^i,$ & $\frac{q^{2n}-1}{q+1}$ & $\widehat{\zeta}_n^i$,  \\
  $1\leq i\leq q/2$ & & $1\leq i\leq ((q+1)_{\ell'}-1)/2$ \\
  \hline
\end{tabular}
\end{table}

The formulas from \cite{TiepGuralnick04} for calculating the values for the characters $\tau_n^i$ and $\zeta_n^i$ in $SL_{2n}(q)$ and $SU_{2n}(q)$, respectively, are
\begin{equation}\label{formulafortau}
\tau_n^i(g)=\frac{1}{q-1}\sum_{j=0}^{q-2}\tilde{\delta}^{ij}q^{\dim_{\F_q}\ker(g-\delta^j)}-2\delta_{i,0}
\end{equation}
and
\begin{equation}\label{formulaforzeta}
\zeta_n^i(g)=\frac{1}{q+1}\sum_{j=0}^q \tilde{\xi}^{ij}(-q)^{\dim_{\F_{q^2}}\ker(g-\xi^j)}.
\end{equation}
Here $\delta$ and $\tilde{\delta}$ are fixed primitive $(q-1)$th roots of unity in $\F_q$ and $\C$, respectively.  Similarly, $\xi$, $\tilde{\xi}$ are fixed primitive $(q+1)$th roots of unity in $\F_{q^2}$ and $\C$, respectively.  The kernels in the formulae are computed on the natural modules $W:=(\F_q)^{2n}$ for $SL_{2n}(q)$ or $\tilde{W}:=(\F_{q^2})^{2n}$ for $SU_{2n}(q)$.

\subsection{The Proof of \prettyref{thm:lowdimreps}}
 We are now ready to prove \prettyref{thm:lowdimreps}.  We do this in the form of two separate proofs - one for part (A) and one for part (B).

\begin{proof}[Proof of \prettyref{thm:lowdimreps} (A)]
Suppose that $\chi\in\ibr_\ell(G)$ lies in a unipotent block.  The degrees of irreducible Brauer characters lying in unipotent blocks can be extracted from \cite{white2000}, and we have listed these in \prettyref{app:BrauerCharTable}.  Note that the character $\chi_2$ in the notation of \cite{white2000} is the Weil character $\rho_3^2$ in the notation of \cite{TiepGuralnick04}.  Similarly, $\chi_3=\beta_3$, $\chi_4=\rho_3^1$, and $\chi_5=\alpha_3$.

We consider the cases $\ell$ divides $q-1, q+1, q^2-q+1, q^2+q+1, $ and $q^2+1$ separately. Let $D_\ell$ denote the bound in part A(4) of \prettyref{thm:lowdimreps} for the prime $\ell$.

First, assume that $\ell|(q-1)$ and $\ell\neq 3$.  If $\chi(1)\leq D_\ell=\wh{\chi}_{11}(1)$, then since $q\geq 4$, $\chi$ must be $\wh{\chi}_1=1_G, \wh{\chi}_2, \wh{\chi}_3, \wh{\chi}_4, \wh{\chi}_5, \wh{\chi}_6,$ or $\wh{\chi}_7$.  Hence we are in situation A(1), A(2), or A(3).

Now let $\ell|(q^2+q+1)$.  Note that we are including the case $\ell=3|(q-1)$.  In either case, if $\chi(1)\leq D_\ell= \wh{\chi}_{11}(1)$, then $\chi$ is $1_G, \wh{\chi}_2, \wh{\chi}_3, \wh{\chi}_4-1_G, \wh{\chi}_5, \wh{\chi}_6,$ or $\wh{\chi}_7$, as $q\geq 4$.  Again, we therefore have situation A(1), A(2), or A(3).

If $\ell|(q^2+1)$, then again $D_\ell=\wh{\chi}_{11}(1)$.  A character in a unipotent block has degree smaller than this bound if and only if it is $1_G, \wh{\chi}_2, \wh{\chi}_3, \wh{\chi}_4, \wh{\chi}_5, \wh{\chi}_6-1_G,$ or $\wh{\chi}_7-\wh{\chi}_4$, which gives us situation A(1), A(2), or A(3) in this case.

Now let $\ell|(q^2-q+1)$ with $\ell\neq 3$.   Then $D_\ell=\wh{\chi}_{11}(1)-\wh{\chi}_{5}(1)$, and $\chi(1)<D_\ell$ if and only if $\chi$ is $1_G, \wh{\chi}_2-1_G, \wh{\chi}_3, \wh{\chi}_4, \wh{\chi}_5, \wh{\chi}_6$ or $\wh{\chi}_7$, so we have situation A(1), A(2), or A(3) for this choice of $\ell$.

Finally, suppose $\ell|(q+1)$.  In this case, $D_\ell=\varphi_{7}(1)$.  Note that from \cite{white2000}, the parameter $\alpha$ in the description in \prettyref{app:BrauerCharTable} for this Brauer character is $1$ if $(q+1)_{\ell}=3$ and $2$ otherwise.  Also, note that in this case, D. White \cite{white2000} has left $3$ unknowns in the decomposition matrix for the principal block.  Namely, the unknown $\beta_1$ is either $0$ or $1$ and the unknowns $\beta_2, \beta_3$ satisfy
\[1\leq\beta_2\leq(q+2)/2, \quad 1\leq\beta_3\leq q/2.\]  Now, using these bounds for $\beta_2$ and $\beta_3$, we may find a lower bound for $\varphi_{10}(1)$ as follows:
\[\varphi_{10}(1)=\chi_{30}(1)-\beta_3(\chi_{11}(1)-\chi_5(1))-(\beta_2-1)\chi_{23}(1)-\chi_{28}(1)\]
\[=\phi_1^2\phi_3(q^3\phi_4-\beta_3q^4/2+ \beta_3q/2-\phi_4-(\beta_2-1)q\phi_1\phi_6/2) \]
\[\geq \phi_1^2\phi_3(q^3\phi_4-(q/2)q^4/2+ q/2-\phi_4-(q/2)q\phi_1\phi_6/2) = \phi_1^2\phi_3(q^3\phi_4-q^5/4+ q/2-\phi_4-q^2\phi_1\phi_6/4).\]  Here $\phi_j$ represents the $j$th cyclotomic polynomial.  As this bound is larger than $D_\ell$ for $q\geq4$, and the other Brauer characters are known, with the possible exception of $\varphi_2=\wh{\chi}_2-\beta_1\cdot 1_G$, we see that the only irreducible Brauer characters in a unipotent block with degree less than $D_\ell$ are $1_G, \wh{\chi}_2-\beta_1\cdot 1_G, \wh{\chi}_3-1_G, \wh{\chi}_4, \wh{\chi}_5, \wh{\chi}_6-\wh{\chi}_3-\wh{\chi}_2+1_G = \wh{\chi}_{28},$ and $\wh{\chi}_7-\wh{\chi}_6+\wh{\chi}_3-1_G=\wh{\chi}_{35}-\wh{\chi}_5$.

Now, recall that when $\ell|(q^3+1)$, \cite[Table 1]{TiepGuralnick04} gives us that $\wh{\rho}_3^2-1_G$ is an irreducible Brauer character.  Since $(q+1)|(q^3+1)$ and  $\wh{\rho}_3^2=\wh{\chi}_2$, this implies that in fact the unknown $\beta_1$ is $1$.

Hence, we see that we are in one of the situations A(1), A(2), or A(3), and the proof is complete for $\chi$ in a unipotent block.
\end{proof}

\begin{proof}[Proof of \prettyref{thm:lowdimreps}(B)]
As $\chi$ does not lie in a unipotent block, we have $\chi\in\mathcal{E}_\ell(G,(s))$ for some semisimple $\ell'$-element $s\neq 1$. Let $B$ denote the bound $q(q^4+q^2+1)(q-1)^3/2$ in part B(4) of \prettyref{thm:lowdimreps}.  Since $(q-1)^2(q^2+1)(q^4+q^2+1)>B$ when $q\geq 4$, it follows from \prettyref{lem:indexessemisimplecentralizersSp6} and \prettyref{prop:jordandecompforibrs} that either $\chi(1)>B$ or $\chi\in\mathcal{E}_\ell(G,(s))$ where $s$ is lies in one of the classes $c_{3,0}, c_{4,0}, c_{5,0},$ $c_{6,0}, c_{8,0}$, or $c_{10,0}$ of $G^\ast=Sp_6(q)$.  (Note that we are making the identification $G\cong SO_{7}(q)$ so that $G^\ast=Sp_6(q)$ here.)  From \prettyref{tab:ssclasses}, we see that in each of these cases, $C_{G^\ast}(s)=L^\ast$ is a direct product of groups of the form $Sp_2(q), Sp_4(q), GU_i(q), $ or $GL_i(q)$ for $1\leq i\leq 3$, and hence is self-dual.  That is, $L\cong L^\ast$ in the notation of \prettyref{lem:moritaequiv}.  We will make this identification and consider characters of $C_{G^\ast}(s)$ as characters of $L$.

If $s\in c_{3,0}$ or $c_{4,0}$, then $C_{G^\ast}(s)\cong C\times Sp_4(q),$ where $C$ is a cyclic group of order $q-1$ or $q+1$, respectively.  In this case, since $\mathfrak{d}_\ell(Sp_4(q))=(q-1)(q^2-q)/2$ (see \cite{landazuriseitz}), we have $\chi(1)\geq (q^6-1)(q-1)(q^2-q)/(2(q+1))= B$ by \prettyref{prop:jordandecompforibrs},  unless $\chi$ corresponds to $1_{C_{G^\ast}(s)}$ in $\ibr_\ell(C_{G^\ast}(s))$.  In the latter case, we are in situation B(1), as $\chi$ is one of the characters $\wh{\tau}_3^i$ or $\wh{\zeta}_3^j$.

For $s$ in one of the families of classes $c_{5,0}$ or $c_{6,0}$, we have $C_{G^\ast}(s)\cong GL_3(q)$ or $GU_3(q)$, respectively.  Now, nonprincipal characters found in a unipotent $\ell$-block of $GL_3(q)$ have degree at least $q^2+q-1$ (see \cite{james1990}).  Moreover, $\mathfrak{d}_\ell(GU_3(q))$ is at least $q^2-q$ (see, for example, \cite{TiepZalesskii96}). Hence in either case, for $\chi\in\mathcal{E}_\ell(G,(s))$, we know by \prettyref{prop:jordandecompforibrs} that either $\chi(1)\geq(q^2+1)(q-1)^2(q^2+q+1)(q^2-q)>B$ or $\chi$ corresponds to $1_{C_{G^\ast}(s)}$ in $\ibr_\ell(C_{G^\ast}(s))$.  In the second case, we have situation B(2).

Next, suppose that $\chi\in\mathcal{E}_\ell(G,s)$ with $s\in c_{8,0}$ or $c_{10,0}$.  Here we have $C_{G^\ast}(s)\cong GL_2(q)\times SL_2(q)$ or $GU_2(q)\times SL_2(q)$, respectively.  The smallest possible nontrivial character degree in a unipotent block is therefore at least $q-1$.  Since $(q-1)[G^\ast:C_{G^\ast}(s)]_{2'}>B$ in either case, we know by \prettyref{prop:jordandecompforibrs} that either $\chi(1)\geq B$ or situation B(3) holds, and the proof is complete.

\end{proof} 
\section{A Basic Reduction}\label{sec:BasicReduction}
The goal of this section is to eliminate many possibilities for subgroups $H$ yielding triples as in \prettyref{prob:aschbakerscottprogram}.  We do this in the form of two theorems, treating $Sp_6(q)$ and $Sp_4(q)$ separately.

\begin{theorem}[Reduction Theorem for $Sp_6(q)$]
 Let $(G,H,V)$ be a triple as in \prettyref{prob:aschbakerscottprogram}, with $\ell\neq 2$, $G=Sp_6(q)$, $q\geq 4$ even, and $H<G$ a maximal subgroup.  Then $H$ is $G$-conjugate to either $G_2(q), O_6^\pm(q),$ or a maximal parabolic subgroup of $G$.
\end{theorem}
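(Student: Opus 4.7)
I plan to invoke Kleidman--Liebeck's explicit description of the maximal subgroups of $G = Sp_6(q)$ for $q \geq 4$ even, and to eliminate every maximal subgroup $H$ not on the target list $\{G_2(q), O_6^\pm(q), P_1, P_2, P_3\}$. The two basic tools will be \prettyref{lem:maxmindegrees}, which forces $\sqrt{|H/Z(H)|} \geq \mathfrak{m}_\ell(H) \geq \dim V \geq \mathfrak{d}_\ell(G)$, and \prettyref{thm:lowdimreps}, which yields both the lower bound $\mathfrak{d}_\ell(G) \geq \widehat{\alpha_3}(1) = q(q-1)(q^3-1)/2$ and much sharper lower bounds on $\dim V$ outside the short list of small-degree candidates enumerated in parts (A)(1--3) and (B)(1--3).

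The non-target maximal subgroups supplied by Kleidman--Liebeck are the subspace stabilizer $Sp_2(q)\times Sp_4(q) \in \mathcal{C}_1$, the imprimitive $Sp_2(q)\wr S_3 \in \mathcal{C}_2$, the field extension subgroup $Sp_2(q^3).3 \in \mathcal{C}_3$, the subfield subgroups $Sp_6(q_0)$ with $q = q_0^r$, $r$ prime, in $\mathcal{C}_5$, and the sporadic almost simple groups in $\mathcal{S}$ (of order bounded independently of $q$). The easy eliminations are the sporadic $\mathcal{S}$-subgroups and the field-extension $Sp_2(q^3).3$: for the former, \prettyref{lem:maxmindegrees} gives an immediate contradiction since their orders are bounded while $\mathfrak{d}_\ell(G)$ grows polynomially in $q$; for the latter, the crude bound $\mathfrak{m}_\ell(Sp_2(q^3).3)\le 3(q^3+1)$ is already smaller than $\widehat{\alpha_3}(1)$ for $q\geq 4$.

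The main obstacle will be the remaining borderline subgroups $Sp_2(q)\times Sp_4(q)$, $Sp_2(q)\wr S_3$, and $Sp_6(q_0)$, whose orders are large enough that a naive application of \prettyref{lem:maxmindegrees} is insufficient. For these I would exploit the structure of their irreducibles: every irreducible representation of a direct product is a tensor product of irreducibles of the factors, and every irreducible of a wreath product $C\wr S_n$ is Clifford-induced from irreducibles of $C$ and of Young subgroups of $S_n$. Combined with the bounds $\mathfrak{m}_\ell(SL_2(q))\leq q+1$ and a polynomial-degree-$4$ bound for $\mathfrak{m}_\ell(Sp_4(q))$ (from Enomoto's character table), these control $\mathfrak{m}_\ell(H)$ and, when paired with the much sharper lower bounds for $\dim V$ in \prettyref{thm:lowdimreps}(A)(4) and (B)(4), rule out all large-degree candidates for $V$.

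What remains is a short list of small-degree candidates for $V$: the Weil characters $\widehat{\alpha_3},\widehat{\beta_3},\widehat{\rho_3^1},\widehat{\rho_3^2},\widehat{\tau_3^i},\widehat{\zeta_3^j}$ and the nearby unipotent characters $\widehat{\chi_6},\widehat{\chi_7},\widehat{\chi_{28}},\widehat{\chi_{35}}$. For each of these I would verify reducibility on the three borderline subgroups using the explicit Weil character formulas \eqref{formulafortau} and \eqref{formulaforzeta} together with the known restriction formulas for Weil characters of $Sp_{2n}(q)$ to $Sp_{2a}(q)\times Sp_{2(3-a)}(q)$; for the non-Weil characters, straightforward character computations (or the use of \prettyref{lem:ifacharacterlifts} to reduce to the ordinary characters tabulated in \cite{white2000}) will show that each restriction splits. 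The hard part throughout is the borderline analysis, where the gap $\mathfrak{m}_\ell(H)-\mathfrak{d}_\ell(G)$ is small and the argument must be run on a case-by-case basis keyed to the $\ell$-regime dictated by \prettyref{thm:lowdimreps}.
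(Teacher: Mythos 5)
Your proposal follows essentially the same route as the paper: invoke the Kleidman--Liebeck/Bray--Holt--Roney-Dougal list of maximal subgroups, kill the field-extension subgroup outright via $\mathfrak{m}_\ell(H)\leq 3(q^3+1)<\mathfrak{d}_\ell(G)$, and handle the borderline subgroups $Sp_4(q)\times Sp_2(q)$, $Sp_2(q)\wr S_3$, and $Sp_6(q_0)$ by combining Clifford-theoretic upper bounds on $\mathfrak{m}_\ell(H)$ with the gap bounds of \prettyref{thm:lowdimreps} and then a degree/character-value comparison for the surviving small-degree candidates (the paper mostly gets away with degree-divisibility checks plus \prettyref{lem:Lemma1forO6} rather than explicit restriction formulas, but that is a difference of bookkeeping, not of method). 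The only inaccuracy is your inclusion of bounded-order $\mathcal{S}$-subgroups, which do not occur for $Sp_6(q)$ with $q\geq 4$ even apart from $G_2(q)$ itself; since you would eliminate them trivially anyway, this is harmless.
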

\begin{proof}
First note that from \cite{landazuriseitz}, $\mathfrak{d}_\ell(G)= (q^3-1)(q^3-q)/(2(q+1))$. Second, by \cite{roneyholtbray} and \cite{KleidmanLiebeck}, the maximal subgroups of $G$ are isomorphic to one of the following:
\begin{enumerate}
\item $SL_2(q^3).3$
\item $Sp_2(q)\wr S_3$
\item $Sp_4(q)\times Sp_2(q)$
\item $Sp_6(q_0)$, where $q=q_0^m$, some $m>1$
\item $G_2(q)$
\item $O_6^\pm(q)$
\item a maximal parabolic subgroup of $G$.
\end{enumerate}

If $H$ is as in (1), then by Clifford theory, $\mathfrak{m}(H)\leq 3(q^3+1)<\mathfrak{d}_\ell(G)$, since $\mathfrak{m}(SL_2(q^3))=q^3+1$.
If $H$ is as in (2), then $(Sp_2(q))^3\lhd H$ of index $6$, so by Clifford theory, $\mathfrak{m}(H)\leq 6(q+1)^3$, which is smaller than $\mathfrak{d}_\ell(G)$ unless $q=4$.  When $q=4$, we can restrict our attention to the Weil characters, by \prettyref{thm:lowdimreps}.  Hence it suffices by \prettyref{lem:Lemma1forO6} and \prettyref{tab:TiepGuralnickTable1} to note that neither $\chi(1)$ nor $\chi(1)-1$ divides $|H|$ for any complex Weil character $\chi$.

If $H$ is as in (3), then $\mathfrak{m}(H)\leq (q^2+1)(q+1)^3$, since by \cite{luebeckwebsite}, $\mathfrak{m}(Sp_2(q))\leq q+1$ and $\mathfrak{m}(Sp_4(q))\leq (q+1)^2(q^2+1)$.  Hence $\mathfrak{m}(H)\leq D$, where $D$ is the bound in part (B) of \prettyref{thm:lowdimreps}, so by \prettyref{thm:lowdimreps}, $\chi$ must either lift to an ordinary character or belong to a unipotent block of $G$.

Moreover, part (A) of \prettyref{thm:lowdimreps} yields that the only irreducible Brauer characters in a unipotent block that do not lift and have degree at most $\mathfrak{m}(H)$ are $\wh{\rho_3^2}-1, \wh{\beta}_3-1$ in the case $\ell|(q+1)$, $\wh{\rho_3^2}-1$ in the case $\ell|(q^2-q+1)$, or $\wh{\rho}_3^1-1$ in the case $\ell|(q^2+q+1)$.  From \cite{luebeckwebsite}, we see that none of the degrees corresponding to these characters occur in $\irr(H)=\irr(Sp_4(q))\otimes \irr(Sp_2(q))$, and moreover none of the degrees of characters in $\irr(G)$ can occur in $\irr(H)$.  Thus by \prettyref{lem:Lemma1forO6}, there are no possible such modules $V$ for this choice of $H$.

Finally, suppose $H$ is as in (4).  Then
$\mathfrak{m}(H)=
\left\{\begin{array}{cc}(q_0^2+1)(q_0^4+q_0^2+1)(q_0+1)^3& q_0> 4\\
                q_0^2(q_0+1)(q_0^2+1)(q_0^4+q_0^2+1) & q_0\leq4\end{array}\right.$
by \cite{luebeckwebsite}, and $\mathfrak{d}_\ell(G)=(q_0^{3m}-1)(q_0^{3m}-q_0^m)/(2(q_0^m+1))$.  Thus
\[\mathfrak{d}_\ell(G)\geq \frac{(q_0^{6}-1)(q_0^{6}-q_0^2)}{2(q_0^2+1)}=\frac{1}{2}q_0^2(q_0^4+q_0^2+1)(q_0^2-1)^2>\mathfrak{m}(H)\] as long as $q_0\geq4$, and we have only to consider the case $H=Sp_6(2)$.  Here as long as $q\geq 8$, we also have $\mathfrak{d}_\ell(G)>\mathfrak{m}(H)$, so we are reduced to the case $H=Sp_6(2), G=Sp_6(4)$.  Then $\mathfrak{m}(H)=512$ and $\mathfrak{d}_\ell(G)=378$.  Moreover, from \prettyref{thm:lowdimreps}, the only irreducible $\ell$-Brauer characters of $G$ which have degree less than or equal to $\mathfrak{m}(H)$ are Weil characters, which are all of the form $\wh{\chi}$ or $\wh{\chi}-1$ for $\chi\in\irr(G)$. Now, from GAP's character table library (see \cite{GAP4}, \cite{GAPctlib}), it is clear that the only $\ell$-Brauer character of $G$ whose degree occurs as a degree of $H$ is $\wh{\alpha_3}$, which has degree  $378$.  However, observing the character values on involutory classes of both $G$ and $H$, we see that $V$ cannot afford this character.  Thus there are no possible triples $(G,V,H)$ with this $G,H$, by \prettyref{lem:Lemma1forO6}.

Therefore, we are left only with subgroups $H$ as in (5)-(7), as claimed.

\end{proof}

\begin{theorem}[Reduction Theorem for $Sp_4(q)$]
 Let $(G,H,V)$ be a triple as in \prettyref{prob:aschbakerscottprogram}, with $\ell\neq 2$, $G=Sp_4(q)$, $q\geq 4$ even, and $H<G$ a maximal subgroup.  Then $H$ is a maximal parabolic subgroup of $G$.
\end{theorem}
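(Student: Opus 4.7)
The plan is to mirror the proof of the preceding Reduction Theorem for $Sp_6(q)$. By \cite{landazuriseitz}, $\mathfrak{d}_\ell(G) = \widehat{\alpha}_2(1) = q(q-1)^2/2$, and by \cite{KleidmanLiebeck} the maximal subgroups of $G = Sp_4(q)$ for $q\geq 4$ even, apart from the parabolics $P_1$ and $P_2$, are (up to $G$-conjugacy) the imprimitive subgroup $Sp_2(q)\wr S_2$, the field-extension subgroup $Sp_2(q^2).2$, the subfield subgroups $Sp_4(q_0)$ for $q = q_0^r$ with $r$ prime, the orthogonal subgroups $O_4^+(q)\cong (SL_2(q)\times SL_2(q)).2$ and $O_4^-(q)\cong L_2(q^2).2$, and, when $q = 2^{2k+1}$ with $k\geq 1$, the Suzuki subgroup $Sz(q)$.

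For each such $H$ I would bound $\mathfrak{m}(H)$ via Clifford's theorem applied to an obvious normal subgroup, using $\mathfrak{m}(SL_2(q)) = q+1$, $\mathfrak{m}(Sp_4(q_0))\leq (q_0+1)^2(q_0^2+1)$ from \cite{luebeckwebsite}, and the known character degrees of $Sz(q)$. This produces the bounds $\mathfrak{m}(H)\leq 2(q+1)^2$ in the imprimitive and $O^+$ cases, $\mathfrak{m}(H)\leq 2(q^2+1)$ in the field-extension and $O^-$ cases, $\mathfrak{m}(H)\leq (q_0+1)^2(q_0^2+1)$ in the subfield case, and $\mathfrak{m}(Sz(q)) < 2q^2$. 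A routine comparison shows each of these falls strictly below $\mathfrak{d}_\ell(G) = q(q-1)^2/2$ as soon as $q\geq 8$, so \prettyref{lem:maxmindegrees} eliminates the corresponding subgroup. For the subfield family the only remaining possibility is $Sp_4(2)\cong S_6$, which is dispatched by $\mathfrak{m}(S_6) = 16 < 18\leq \mathfrak{d}_\ell(G)$ for every admissible $q$.

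What remains is the configuration $q = 4$ for the four subgroups $Sp_2(4)\wr S_2$, $Sp_2(16).2$, $O_4^+(4)$, and $O_4^-(4)$, where the crude bounds become $50, 34, 50, 34$ against $\mathfrak{d}_\ell(G) = 18$. Here I would invoke the $Sp_4$-analog of \prettyref{thm:lowdimreps} (compare \cite{TiepGuralnick04}) to reduce to the Weil Brauer characters $\widehat{\alpha}_2$ (degree $18$) and $\widehat{\beta}_2, \widehat{\rho}_2^1, \widehat{\rho}_2^2$ (each of degree $34$), possibly adjusted by $1_G$, since these exhaust the irreducible $\ell$-modular characters of $Sp_4(4)$ of degree at most $50$. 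For each pairing of such a $\chi$ with one of the four subgroups, \prettyref{lem:Lemma1forO6} together with the Brauer character data from \cite{GAPctlib} and \cite{luebeckwebsite} suffices to exclude the requisite degree from $\ibr_\ell(H)$ in most combinations; in the remaining ones an inner-product computation against the ordinary characters of $H$ on a discriminating class closes the argument.

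The main obstacle is expected to be $H = O_4^+(4)\cong (A_5\times A_5).2$, which already admits an ordinary character of degree $18$ (obtained by inducing a product of two inequivalent $3$-dimensional characters of the factors), so a degree-counting argument alone is insufficient. There the finishing step is to evaluate $\widehat{\alpha}_2$ and each candidate $\theta\in\ibr_\ell(H)$ of degree $18$ on an involution class distinguishing the two $SL_2(4)$-factors, forcing $\langle\widehat{\alpha}_2|_H,\theta\rangle < 1$ and hence exhibiting a nontrivial decomposition of $\widehat{\alpha}_2|_H$.
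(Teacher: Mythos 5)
Your overall strategy is the paper's: compare $\mathfrak{m}(H)$ with $\mathfrak{d}_\ell(G)=q(q-1)^2/2$ to kill most subgroups for $q\geq 8$, then at $q=4$ reduce to the $\ell$-modular Weil characters and finish with explicit character-value checks (the paper does exactly this, including the observation that $O_4^+(4)$ and $Sp_2(4)\wr S_2$ each have a unique ordinary character of degree $18$ which is seen not to be $\alpha_2|_H$ by inspecting values). However, there is a genuine gap: your enumeration of the maximal subgroups is incomplete. The paper's list, taken from \cite{flesner} and \cite{roneyholtbray}, also contains $[q^4]:C_{q-1}^2$ and the torus normalizers $C_{q-1}^2:D_8$, $C_{q+1}^2:D_8$, and $C_{q^2+1}:4$, none of which appear in your case analysis. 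The three torus normalizers are harmless (their irreducible character degrees are at most $8$, far below $\mathfrak{d}_\ell(G)$), but $[q^4]:C_{q-1}^2$ is not disposed of by your generic bound: $\sqrt{|H|}=q^2(q-1)>q(q-1)^2/2$ for all $q$, so \prettyref{lem:maxmindegrees} alone fails here. The paper handles it by a separate argument: $H$ is solvable, so by Fong--Swan every $\ell$-Brauer character lifts, and Clifford theory together with Ito's theorem applied to the normal abelian subgroup $[q^4]$ forces every irreducible character degree of $H$ to divide $(q-1)^2<\mathfrak{d}_\ell(G)$. Without some such argument your proof does not cover this subgroup.

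Two smaller points. First, at $q=4$ the character $\rho_2^2$ has degree $q(q+1)^2/2=50$, not $34$ (the degree-$34$ characters are $\beta_2$ and $\rho_2^1$); this does not change which characters survive your crude bounds of $34$ and $50$, but the count should be corrected. Second, your closing inner-product formulation $\langle\widehat{\alpha}_2|_H,\theta\rangle<1$ is not the right invariant for Brauer characters; the paper instead compares the values of $\alpha_2|_H$ (equivalently $\wh{\alpha}_2|_H$, after noting via \prettyref{lem:ifacharacterlifts} and \prettyref{lem:Lemma1forO6} that one may reduce to ordinary characters) with those of the unique ordinary character of $H$ of degree $18$ on a suitable class, which is the argument you should make precise.
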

\begin{proof}
Let $V$ afford the character $\chi\in\ibr_\ell(G)$.  From \cite{landazuriseitz}, $\mathfrak{d}_\ell(G)=q(q-1)^2/2$, and by \cite{flesner} and \cite{roneyholtbray}, the maximal subgroups of $G$ are
\begin{enumerate}
\item a maximal parabolic subgroup of $G$ (geometrically, the stabilizer of a point or a line)
\item $Sp_2(q)\wr S_2$ (geometrically, the stabilizer of a pair of polar hyperbolic lines)
\item $O_4^\epsilon(q)$, $\epsilon = +$ or $-$
\item $Sp_2(q^2):2$
\item $[q^4]:C_{q-1}^2$
\item $Sp_4(q_0)$, where $q=q_0^m$, some $m>1$
\item $C_{q-1}^2:D_8$
\item $C_{q+1}^2:D_8$
\item $C_{q^2+1}:4$
\item $Sz(q)$ (when $q=2^{m}$ with $m\geq 3$ odd)
\end{enumerate}

If $H$ is as in (2), (3), or (4), then $\mathfrak{m}(H)\leq 2(q+1)^2$ or $2(q^2+1)$, which are smaller than $\mathfrak{d}_\ell(G)$ for $q\geq 8$.  Letting $q=4$, the only members of $\ibr_\ell(G)$ with sufficiently small degree are the $\ell$-modular Weil characters corresponding to $\alpha_2,\beta_2,\rho_2^1,$ and $\rho_2^2$, and hence either lift to an ordinary character or are of the form $\wh{\chi}-1_G$ for an ordinary character $\chi$ of $G$.  Direct calculation using GAP and the GAP character table library (\cite{GAP4}, \cite{GAPctlib}) show that no ordinary character $\chi\in\irr(G)$ satisfies $\chi|_H\in\irr(H)$ or $\chi|_H-1\in\irr(H)$ when $H\cong SL_2(16):2\cong O_4^-(4)$, so by \prettyref{lem:Lemma1forO6}, $H$ cannot be this group.  If $H=O_4^+(4)\cong (SL_2(4)\times SL_2(4)).2$ or $SL_2(4)\wr S_2$, then let $K\lhd H$ denote the subgroup $SL_2(4)\times SL_2(4)$.  By Clifford theory, $\chi|_K$ must either be irreducible or the sum of two irreducible characters of $K$ of the same degree.   By observing the character values of the $\ell$-modular Weil characters listed above and those of $K$ with the proper degree, it is clear that none of these restrict to $K$ in such a way, except possibly $\wh{\alpha_2}$.  Moreover, both $SL_2(4)\wr S_2$ and $O_4^+(4)$ have a unique ordinary character of degree $18$, but observing the values of this character, we see that this is not $\alpha_2|_H$.  Hence by \prettyref{lem:ifacharacterlifts}, $H$ cannot be as in (2), (3), or (4).

If $H$ is as in (5), then it is solvable and by the Fong-Swan theorem, every $\ell$-Brauer character lifts to an ordinary character.  $H$ has a normal subgroup of the form $[q^4]:C_{q-1}$ with quotient group $C_{q-1}$, so by Clifford theory any irreducible character of $H$ has degree $t\cdot\theta(1)$, where $t$ divides $q-1$ and $\theta\in\irr([q^4]:C_{q-1})$.  Since $[q^4]$ is a normal abelian subgroup of $[q^4]:C_{q-1}$, Ito's theorem implies that $\theta(1)$ divides $q-1$.  It follows that any character of $H$ must have degree dividing $(q-1)^2$, which is smaller than $\mathfrak{d}_\ell(G)$, so $H$ cannot be as in (5).

Finally, for $H$ is as in (6),(7), (8), (9), or (10), $\mathfrak{m}_\ell(H)< \mathfrak{d}_\ell(G)$, which leaves (1) as the only possibility for $H$, as stated.

\end{proof} 
\section{Restrictions of Irreducible Characters of $Sp_6(q)$ to $G_2(q)$}\label{sec:resttoG2}

Let $q$ be a power of $2$.  The purpose of this section is to prove part (2) of \prettyref{thm:mainresult}.  Viewing $G_2(q)$ as a subgroup of $Sp_6(q)$, we solve \prettyref{prob:aschbakerscottprogram} for the case $G=Sp_6(q)$, $H=G_2(q)$, and $V$ is a cross-characteristic $G$-module.  That is, we completely classify all irreducible $\ell$-Brauer characters of $Sp_6(q)$, which restrict irreducibly to $G_2(q)$ when $\ell\neq 2$.

For the classes and complex characters of $Sp_6(q)$, we use as reference Frank L{\"u}beck's thesis (see \cite{Luebeckthesis}), in which he finds the conjugacy classes and irreducible complex characters of $Sp_6(q)$.  For $G_2(q)$, we refer to \cite{EnomotoYamada}, in which Enomoto and Yamada find the conjugacy classes and irreducible complex characters of $G_2(q)$.  We adapt the notation of \cite{EnomotoYamada} that $\epsilon\in\{\pm 1\}$ is such that $q\equiv \epsilon (\mod 3)$.

For the $\ell$-Brauer characters of $Sp_6(q)$, we refer to the work done by D. White in \cite{white2000} (see also \prettyref{app:BrauerCharTable}), and for those of  $G_2(q)$ we refer to work by G. Hiss and J. Shamash in \cite{Hiss89}, \cite{HissShamash90}, \cite{shamash87}, \cite{shamash89}, and \cite{shamash92}.  Since many of these references utilize different notations for the same characters, we include a conversion between notations in \prettyref{app:notationschars}.

The first step is to find the fusion of conjugacy classes from $G_2(q)$ into $Sp_6(q)$.

 \subsubsection{Fusion of Conjugacy Classes in $G_2(q)$ into $Sp_6(q)$}\label{sec:fusionG2Sp6even}
 In this section, we compute the fusion of conjugacy classes from $H=G_2(q)$ into $G=Sp_6(q)$.  \prettyref{tab:FusionG2Sp6qeven} summarizes the results.

 \begin{table}[ht]
\centering
 \caption{The Fusion of Classes from $G_2(q)$ into $Sp_6(q)$}
 \label{tab:FusionG2Sp6qeven}
\subfloat[][]{
\begin{tabular}{|c|c|}
 \hline
 Class in  & Class in \\
 $G_2(q)$ &  $Sp_6(q)$\\
 \hline\hline
 $A_0$ & $c_{1,0}$\\
 \hline
 $A_1$ & $c_{1,2}$\\
 \hline
 $A_2$ & $c_{1,4}$\\
 \hline
  $A_{31}$ & $\left\{\begin{array}{cc} c_{1,5}&\hbox{ if $\epsilon=1$,}\\ c_{1,6} & \hbox{if $\epsilon=-1$}\end{array}\right.$\\
  \hline
  $A_{32}$ & $\left\{\begin{array}{cc} c_{1,6}&\hbox{ if $\epsilon=1$,}\\ c_{1,5} & \hbox{if $\epsilon=-1$}\end{array}\right.$\\
  \hline
  $A_4$ & $\left\{\begin{array}{cc} c_{1,5}&\hbox{ if $\epsilon=1$,}\\ c_{1,6} & \hbox{if $\epsilon=-1$}\end{array}\right.$\\
  \hline
  $A_{51}$ & $c_{1,10}$\\
  \hline
  $A_{52}$ & $c_{1,11}$\\
  \hline

\end{tabular}}
\qquad
\subfloat[][]{
\begin{tabular}{|c|c|}
\hline
 Class in  & Class in \\
 $G_2(q)$ &  $Sp_6(q)$\\
 \hline\hline
$B_0$ & $\left\{\begin{array}{cc} c_{5,0}&\hbox{ if $\epsilon=1$,}\\ c_{6,0} & \hbox{if $\epsilon=-1$}\end{array}\right.$\\
  \hline
  $B_1$ & $\left\{\begin{array}{cc} c_{5,1}&\hbox{ if $\epsilon=1$,}\\ c_{6,1} & \hbox{if $\epsilon=-1$}\end{array}\right.$\\
  \hline
  $B_2(0)$ & $\left\{\begin{array}{cc} c_{5,2}&\hbox{ if $\epsilon=1$,}\\ c_{6,2} & \hbox{if $\epsilon=-1$}\end{array}\right.$\\
  \hline
  $B_2(1)$ &  $\left\{\begin{array}{cc} c_{5,2}&\hbox{ if $\epsilon=1$,}\\ c_{6,2} & \hbox{if $\epsilon=-1$}\end{array}\right.$\\
  \hline
  $B_2(2)$ &  $\left\{\begin{array}{cc} c_{5,2}&\hbox{ if $\epsilon=1$,}\\ c_{6,2} & \hbox{if $\epsilon=-1$}\end{array}\right.$\\
  \hline
\end{tabular}}
\qquad
\subfloat[][]{
\begin{tabular}{|c|c|}
\hline
 Class in  & Class in \\
 $G_2(q)$ &  $Sp_6(q)$\\
 \hline\hline
  $C_{11}(i)$ & $c_{14,0}$\\
  \hline
  $C_{12}(i)$ & $c_{14,1}$\\
  \hline
  $C_{21}(i)$ & $c_{8,0}$\\
  \hline
  $C_{22}(i)$ & $c_{8,3}$\\
  \hline
  $C(i,j)$ & $c_{22,0}$\\
  \hline
  $D_{11}(i)$ & $c_{21,0}$ \\
  \hline
  $D_{12}(i)$ & $c_{21,1}$ \\
  \hline
  $D_{21}(i)$ & $c_{10,0}$\\
  \hline
  $D_{22}(i)$ & $c_{10,3}$\\
  \hline
  $D(i,j)$ & $c_{29,0}$\\
  \hline
  $E_1(i)$ & $c_{26,0}$\\
  \hline
  $E_2(i)$ & $c_{24,0}$\\
  \hline
  $E_3(i)$ & $c_{28,0}$\\
  \hline
  $E_4(i)$ & $c_{31,0}$\\
 \hline
\end{tabular}}

\end{table}

 We begin with the unipotent classes.  In the notation of \cite{EnomotoYamada} and \cite{Luebeckthesis}, the unipotent classes of $H$ and $G$, respectively, are:

\begin{center}
 \begin{tabular}{|c|c|c|c|c|c|c|c|c|}
   \hline
   Class in $G_2(q)$ & $A_0$ & $A_1$ & $A_2$ & $A_{31}$ & $A_{32}$ & $A_4$ & $A_{51}$ & $A_{52}$ \\
   \hline
   Order & 1 & 2 & 2 & 4 & 4 & 4 & 8 & 8 \\
   \hline
 \end{tabular}
\end{center}

\begin{center}
 \begin{tabular}{|c|c|c|c|c|c|c|c|c|c|c|c|c|}
   \hline
   Class in $Sp_6(q)$ & $c_{1,0}$ & $c_{1,1}$ & $c_{1,2}$ & $c_{1,3}$ &  $c_{1,4}$ & $c_{1,5}$ & $c_{1,6}$ & $c_{1,7}$ & $c_{1,8}$ & $c_{1,9}$ & $c_{1,10}$ & $c_{1,11}$  \\
   \hline
   Order & 1 & 2 & 2 & 2 & 2 & 4 & 4 & 4 & 4 &4 & 8 & 8  \\
   \hline
 \end{tabular}
\end{center}

Explicit calculations shows that for any element $u\in H$ of order $8$, $u^4$ lies in the class $A_1$.  Similarly, any $u\in G$ of order $8$ satisfies that $u^4$ lies in the class $c_{1,2}$.  Thus the class $A_1$ of $H$ must lie in the class $c_{1,2}$ of $G$.

Now, \cite[Proposition 7.6]{TiepKleshchev10} implies that the characters $\tau_3^i$ for $1\leq i\leq (q-2)/2$ restrict irreducibly from $GL_6(q)$ to the character $\chi_3(i)$ in $G_2(q)$ (in the notation of \cite{EnomotoYamada}).  Using equation \eqref{formulafortau}, \cite[Sections 1 and 4]{Luebeckthesis}, and \cite{EnomotoYamada}, we see that $c_{1,4}$ is the only conjugacy class of $G$ of involutions on which $\tau_3^i$ has the same value, $q^2+q+1$, as on the class $A_2$ in $H$.  This tells us that the class $A_2$ of $H$ must lie in the class $c_{1,4}$ of $G$.

Moreover, $\chi_3(i)=\tau_3^i|_H$ has the value $q+1$ on all classes of order-$4$ elements in $H$.  Among the classes of order-$4$ elements of $G$, $\tau_3^i$ only has this value on the classes $c_{1,5}$ and $c_{1,6}$.  Hence $A_{31}, A_{32},$ and $A_4$ must sit inside $(c_{1,5}\cup c_{1,6})$.  By comparing the orders of centralizers and noting that $|C_H(x)|$ must divide $|C_G(x)|$ for $x\in H$, we deduce that
\[A_{31}, A_4 \subset H\cap \left\{\begin{array}{cc} c_{1,5}&\hbox{ if $\epsilon=1$,}\\ c_{1,6} & \hbox{if $\epsilon=-1$}\end{array}\right. .\]

We claim that the class $A_{32}$ does not fuse with the classes $A_{31}$ and $A_4$ in $Sp_6(q)$.  Indeed, suppose otherwise, so that $A_{31}, A_{32}, A_4$ are all in $\left\{\begin{array}{cc} c_{1,5}&\hbox{ if $\epsilon=1$,}\\ c_{1,6} & \hbox{if $\epsilon=-1$}\end{array}\right.$.  Consider the character $\chi=\chi_{1,2}\in\irr(G)$ in the notation of \cite{Luebeckthesis}.  Note that this character has the same absolute value on all elements of order $8$, namely $\frac{q}{2}$.  Using the fusion of the Borel subgroup $B=UT$ into the parabolic subgroup $P$ of $H$ and the fusion of $P$ into $H$ found in \cite[Tables I-1, II-1]{EnomotoYamada}, together with the fusion of the elements of order $2$ and $4$ from $H$ into $G$ which we know (or are assuming), we calculate that $[\chi_U,\chi_U]$ is not an integer, a contradiction.  Therefore, $A_{32}$ must not fuse with $A_{31}$ and $A_4$,  so
\[A_{32} \subset H\cap \left\{\begin{array}{cc} c_{1,6}&\hbox{ if $\epsilon=1$,}\\ c_{1,5} & \hbox{if $\epsilon=-1$}\end{array}\right. .\]

We return to the remaining unipotent classes (namely, those with elements of order $8$) after calculating the fusion of the non-unipotent classes.

Recall that $W$ and $\tilde{W}$ denote the natural modules for $SL_{6}(q)$ and $SU_6(q)$, respectively.  The eigenvalues of the semisimple elements acting on $W$ or $\tilde{W}$ are clear from the notation for the element in \cite{Luebeckthesis} and \cite{EnomotoYamada}, and comparing the eigenvalues for representatives in $H$ and in $G$ yields the results for the semisimple classes, which can be found in \prettyref{tab:FusionG2Sp6qeven}.

Now, for arbitrary elements, we use the fact that conjugate elements must have conjugate semisimple and unipotent parts.  In the cases of the classes $c_{14,1}(i)$, $c_{21,1}(i)$ in $Sp_6(q)$, these are the only non-semisimple classes with semisimple part in the appropriate class, from which we deduce
\[C_{12}(i)\subset c_{14,1}(i)\cap H\quad\hbox{and}\quad D_{12}(i)\subset c_{21,1}(i)\cap H.\]

For $\mathcal{C}=C_{22}(i),D_{22}(i),B_1$ in $G_2(q)$, comparing the dimensions of the eigenspaces of the unipotent parts of the classes in $Sp_6(q)$ that have semisimple part in the same class as that of the representative for $\mathcal{C}$, we obtain only one possibility in each case, yielding
\[C_{22}(i)\subset c_{8,3}(i),\quad D_{22}(i)\subset c_{10,3}(i),\quad B_1\subset\left\{\begin{array}{cc} c_{5,1}& \hbox{ if $\epsilon=1$}\\ c_{6,1} & \hbox{ if $\epsilon=-1$}\end{array}\right.\]

This leaves only the classes $B_2(0), B_2(1), B_2(2)$, and the classes of elements of order $8$ in $G_2(q)$.  For these classes, we again utilize the fact that the scalar product of characters must be integral.  Note that the character $\rho_3^1$ is the character $\chi_{1,4}$ in the notation of \cite{Luebeckthesis} and the character $\alpha_3$ is the character $\chi_{1,5}$ in the notation of \cite{Luebeckthesis}, and that for the classes whose fusions have been calculated so far, these characters agree with the characters $\theta_2$ and $\theta_2'$ of $G_2(q)$, respectively, in the notation of \cite{EnomotoYamada}.  Also note that to compute $\left[\rho_3^1|_{G_2(q)}, \rho_3^1|_{G_2(q)}\right]$ or $\left[\alpha_3|_{G_2(q)}, \alpha_3|_{G_2(q)}\right]$, the fusion of the order-$8$ classes is not needed, since the absolute value of each of these characters is the same on all such elements of $Sp_6(q)$.

Suppose that any of $B_2(0), B_2(1),$ or $B_2(2)$ fuses with $B_1$ in $Sp_6(q)$.  Then for $\epsilon=1$, $\left[\rho_3^1|_{G_2(q)}, \rho_3^1|_{G_2(q)}\right]$ is not an integer since $[\theta_2,\theta_2]$ is an integer.  If $\epsilon=-1$, then $\left[\alpha_3|_{G_2(q)}, \alpha_3|_{G_2(q)}\right]$ is not an integer, using the fact that $[\theta_2', \theta_2']$ is an integer.  Since there is only one other non-semisimple conjugacy class in $Sp_6(q)$ with the same semisimple part, this contradiction yields that $B_2(0), B_2(1),$ and $B_2(2)$ must fuse in $Sp_6(q)$, and
\[B_2(0)\cup B_2(1)\cup B_2(2)\subset \left\{\begin{array}{cc} c_{5,2}&\hbox{ if $\epsilon=1$,}\\ c_{6,2} & \hbox{if $\epsilon=-1$}\end{array}\right.\cap G_2(q)\]

Finally, we may return to the order-$8$ unipotent classes.  If the two classes $A_{51}, A_{52}$ fused in $Sp_6(q)$, then we would have that $\rho_3^1$ agrees with the character $\theta_2$ on all conjugacy classes of $G_2(q)$ except either $A_{51}$ or $A_{52}$. Using this fact, we can calculate $\left[\rho_3^1|_{G_2(q)}, \theta_2\right]$ to see that it is not an integer, so these two classes cannot fuse.  If $A_{51}$ was contained in $c_{1,11}$ and $A_{52}$ was in $c_{1,10}$, we would again see that $\left[\rho_3^1|_{G_2(q)}, \theta_2\right]$ is not an integer, so we must have
\[A_{51}\subset c_{1,10}\cap G_2(q)\quad\hbox{ and }\quad A_{52}\subset c_{1,11}\cap G_2(q),\] which completes the calculation of the fusions of classes of $G_2(q)$ into $Sp_6(q)$.

\subsubsection{The Complex Case}\label{sec:cxcaseG2toSp6even}
In this section, we consider ordinary characters $\chi\in\irr(Sp_6(q))$ which restrict irreducibly to $G_2(q)$.  We also discuss decomposition of the Weil characters that are reducible over $G_2(q)$.

\begin{theorem}\label{thm:resultcomplexforG2}
Let $G=Sp_6(q)$, $H=G_2(q)$ with $q\geq 4$ even.  Suppose that $V$ is an absolutely irreducible ordinary $G$-module.  Then $V$ is irreducible over $H$ if and only if $V$ affords one of the Weil characters \begin{itemize}
    \item ${\rho}_3^1 $, of degree $\frac{1}{2}q(q+1)(q^3+1)$,
     \item${\tau}_3^i$, $1\leq i\leq ((q-1)_{\ell'}-1)/2$, of degree $(q^2+q+1)(q^3+1)$,
    \item $ {\alpha}_3$, of degree $\frac{1}{2}q(q-1)(q^3-1)$,
    \item ${\zeta}_3^i$, $1\leq i\leq ((q+1)_{\ell'}-1)/2$, of degree $(q^2-q+1)(q^3-1)$.
\end{itemize}
\end{theorem}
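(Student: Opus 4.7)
My plan is to prove both directions by reducing to a short list of candidate characters via degree bounds, then verifying irreducibility or reducibility of each restriction using the fusion of conjugacy classes from \prettyref{tab:FusionG2Sp6qeven}.

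First, if $\chi|_H$ is irreducible then $\chi(1) \leq \mathfrak{m}(H)$, and $\mathfrak{m}(G_2(q))$ is bounded by a polynomial of degree $6$ in $q$ which can be read off from \cite{EnomotoYamada}. Invoking the ordinary analogue of \prettyref{thm:lowdimreps} -- namely [Tiep--Guralnick, Theorem 6.1] -- the irreducible ordinary characters of $G$ with degree at most this bound form a short list consisting of the Weil characters $\alpha_3, \beta_3, \rho_3^1, \rho_3^2, \tau_3^i, \zeta_3^j$ together with a handful of other low-degree unipotent characters (such as $\chi_6$ and $\chi_7$ in the notation of \cite{white2000}).

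For each candidate $\chi$ on this short list, I would compute the inner product $[\chi|_H, \chi|_H]_H$ using the fusion table together with the character table of $H$ from \cite{EnomotoYamada}. For $\tau_3^i$ and $\zeta_3^j$, formulas \eqref{formulafortau} and \eqref{formulaforzeta} express the character on any element of $H$ in terms of eigenspace dimensions on the natural module, which can be read off directly from the semisimple and unipotent parts used to establish \prettyref{tab:FusionG2Sp6qeven}. For $\rho_3^1 = \chi_{1,4}$ and $\alpha_3 = \chi_{1,5}$, the values follow from \cite{Luebeckthesis} combined with the fusion, and summing $|\chi(g)|^2/|C_H(g)|$ over class representatives $g$ of $H$ should yield $1$ in each of the four cases, confirming the ``if'' direction. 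For the excluded candidates $\beta_3, \rho_3^2, \chi_6, \chi_7$, and the remaining small unipotent characters, either $\chi(1)$ fails to occur among the degrees listed in \cite{EnomotoYamada} (ruling $\chi$ out immediately), or the same inner-product computation gives a value strictly larger than $1$, corresponding to a genuine decomposition over $H$.

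The main obstacle I anticipate is the parametric nature of $\tau_3^i$ and $\zeta_3^j$: the inner product $[\tau_3^i|_H, \tau_3^i|_H]_H$ involves a cyclotomic sum in the primitive root $\tilde{\delta}^i$, and verifying uniformly in $i$ that this sum equals $1$ requires some care. However, since the character values depend only on eigenspace dimensions of the element on the natural module -- which are constant across each class of $H$ once the fusion is fixed -- the computation reduces to arithmetic identities involving sums of roots of unity, which should be tractable. A secondary obstacle is bookkeeping: the character table of $G$ has many classes, and one must be careful to ensure that the fusion of every class of $H$ listed in \prettyref{tab:FusionG2Sp6qeven} has been used correctly when evaluating $\chi|_H$.
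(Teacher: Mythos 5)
Your proposal is correct and follows the same overall skeleton as the paper: reduce to the four Weil families by a degree argument, then use the fusion of classes from \prettyref{tab:FusionG2Sp6qeven} to verify irreducibility of each restriction. Two differences in execution are worth noting. First, for the ``only if'' direction the paper does not need any inner-product computations for the excluded candidates: a direct comparison of the degree lists of $\irr(Sp_6(q))$ and $\irr(G_2(q))$ (from L\"ubeck's data) already shows that no character of $G$ other than $\rho_3^1,\tau_3^i,\alpha_3,\zeta_3^i$ has a degree occurring in $\irr(H)$, so $\beta_3$, $\rho_3^2$, $\chi_6$ and the low-degree semisimple characters are eliminated at once. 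Second, and more substantively, the paper sidesteps the obstacle you flag as your main concern: rather than evaluating the cyclotomic sum $[\tau_3^i|_H,\tau_3^i|_H]$ uniformly in $i$, it cites \cite[Proposition 7.6]{TiepKleshchev10}, which shows $\tau_3^i$ already restricts irreducibly from $GL_6(q)$ to $G_2(q)$ with $\tau_3^i|_H=\chi_3(i)$; similarly, instead of computing norms for $\zeta_3^i$, $\rho_3^1$ and $\alpha_3$, it identifies their restrictions class-by-class with the named irreducible characters $\chi_3'(i)$, $\theta_2$ and $\theta_2'$ of \cite{EnomotoYamada}. Your norm computation would work, but matching against known irreducibles of $H$ (or importing the $GL_6$ result) is cleaner and removes the uniform-in-$i$ arithmetic you anticipated.
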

\begin{proof}

Assume $V|_H$ is irreducible.  Using \cite{luebeckwebsite} to compare character degrees of $H$ and $G$, we see that the Weil characters $\rho_3^1, \tau_3^i,\alpha_3,\zeta_3^i$ are the only possibilities for the character afforded by $V$.  Thus it suffices to show that each such character is indeed irreducible when restricted to $H$.

Note that from \cite{TiepKleshchev10}, the characters $\tau_3^i$ for $1\leq i\leq (q-2)/2$ actually restrict irreducibly from $GL_6(q)$ to $G_2(q)$, and $\tau_3^i|_{G_2(q)}=\chi_3(i)$ in the notation of \cite{EnomotoYamada}.

We use the fusion of the classes of $H$ into $G$ found in \prettyref{sec:fusionG2Sp6even} to compute the character values of $\zeta_3^i$ on each class.  The class representatives for $G$ found in \cite{Luebeckthesis} are given in their Jordan-Chevelley decompositions, from which we can find the eigenvalues and the dimensions of the eigenspaces over $\F_{q^2}$.  Using the formula \eqref{formulaforzeta}, we then conclude that $\zeta_3^i|_H$ agrees with the character $\chi_3'(i)$ of $H$ in the notation of \cite{EnomotoYamada}, and therefore is irreducible on $H$ for each $1\leq i\leq q/2$.

In the notation of \cite{Luebeckthesis}, $\rho_3^1$ is the unipotent character $\chi_{1,4}$ and $\alpha_3$ is the unipotent character $\chi_{1,5}$.  Given the fusion of classes found in \prettyref{sec:fusionG2Sp6even}, we see that $\chi_{1,4}|_H$ agrees with the character $\theta_2$ in \cite{EnomotoYamada} and $\chi_{1,5}|_H$ agrees with the character $\theta_2'$ in \cite{EnomotoYamada}, meaning that $\rho_3^1$ and $\alpha_3$ are therefore irreducible when restricted to $G_2(q)$.

\end{proof}

\begin{theorem}\label{thm:decompofrho2andbeta}
Let $q$ be a power of $2$.  Then
\begin{enumerate}
\item the linear Weil character $\rho_3^2$ in $\irr(Sp_6(q))$ decomposes over $G_2(q)$ as
\[(\rho_3^2)|_{G_2(q)}=\theta_1+\theta_4,\]

and

\item the unitary Weil character $\beta_3$ in $\irr(Sp_6(q))$ decomposes over $G_2(q)$ as
\[(\beta_3)|_{G_2(q)}=\theta_1'+\theta_4,\] where $\theta_1,\theta_1',\theta_4\in\irr(G_2(q))$ are the characters of degrees $\frac{1}{6}q(q+1)^2(q^2+q+1),$ $\frac{1}{6}q(q-1)^2(q^2-q+1),$ and $\frac{1}{3}q(q^4+q^2+1)$, respectively, as in the notation of Enomoto and Yamada, \cite{EnomotoYamada}.
\end{enumerate}
\end{theorem}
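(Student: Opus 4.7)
The plan is to verify both decompositions by direct character computation, using the fusion of conjugacy classes from $H=G_2(q)$ into $G=Sp_6(q)$ established in Section 5.1.1.

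A preliminary dimension check confirms the claimed right-hand sides. Since $q^4+q^2+1=(q^2+q+1)(q^2-q+1)$, one has
\[
\theta_1(1)+\theta_4(1)=\tfrac{q(q+1)^2(q^2+q+1)}{6}+\tfrac{q(q^2+q+1)(q^2-q+1)}{3}=\tfrac{q(q^2+q+1)(q^2+1)}{2}=\rho_3^2(1),
\]
and analogously $\theta_1'(1)+\theta_4(1)=\tfrac{q(q^2-q+1)(q^2+1)}{2}=\beta_3(1)$, so each purported sum already has the correct total degree.

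Next, for each conjugacy class of $H$, I would locate the ambient $G$-class via \prettyref{tab:FusionG2Sp6qeven} and read off the value of $\rho_3^2$ (which is the unitary Weil character, equivalently $\chi_{1,2}$ in L\"ubeck's notation) and of $\beta_3$ on that class. For classes of semisimple or mixed type, the values may alternatively be computed directly from the linear/unitary Weil character formulas, exactly in the spirit of the calculation of $\tau_3^i$ and $\zeta_3^i$ carried out in the proof of \prettyref{thm:resultcomplexforG2} via \eqref{formulafortau} and \eqref{formulaforzeta}. I would then compare these values class-by-class with $(\theta_1+\theta_4)(h)$, respectively $(\theta_1'+\theta_4)(h)$, computed from the character table of $G_2(q)$ given in \cite{EnomotoYamada}. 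Since both sides are virtual characters of $H$, equality on every conjugacy class of $H$ is equivalent to equality as class functions, establishing the decomposition. (As a sanity check, one could also verify that $\langle\rho_3^2|_H,\theta_1\rangle_H=\langle\rho_3^2|_H,\theta_4\rangle_H=1$ and $\langle\beta_3|_H,\theta_1'\rangle_H=\langle\beta_3|_H,\theta_4\rangle_H=1$, combined with the dimension count, which suffices to rule out other constituents.)

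The main obstacle is organizational rather than conceptual: the case analysis must be carried out uniformly across all families of classes of $G_2(q)$, with particular care for the sign $\epsilon\in\{\pm1\}$ (the fusion of the $B_j$-type and of the $A_{3j}, A_4$ classes depends on whether $q\equiv\pm1\pmod 3$), and for the parametrized generic classes $C_{kj}(i), D_{kj}(i), C(i,j), D(i,j), E_k(i)$, where agreement must hold simultaneously for every allowed parameter value. Once the values match uniformly across every family, both decompositions follow at once.
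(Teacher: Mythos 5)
Your proposal is correct and follows essentially the same route as the paper, whose proof is precisely a class-by-class comparison of $\rho_3^2=\chi_{1,2}$ and $\beta_3=\chi_{1,3}$ (in L\"ubeck's notation) against $\theta_1+\theta_4$ and $\theta_1'+\theta_4$ using the fusion table of Section 5.1.1 and the character tables of \cite{Luebeckthesis} and \cite{EnomotoYamada}. One minor slip: $\rho_3^2$ is a \emph{linear} Weil character (not unitary), though your identification with $\chi_{1,2}$ and the rest of the argument are unaffected.
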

\begin{proof}
This follows from the fusion of conjugacy classes found in \prettyref{sec:fusionG2Sp6even} and the character tables in \cite{Luebeckthesis} and \cite{EnomotoYamada}, noting that the character $\rho_3^2$ and $\beta_3$ are given by $\chi_{1,2}$ and $\chi_{1,3}$, respectively, in the notation of \cite{Luebeckthesis}.
\end{proof}

\subsubsection{The Modular Case}
In this section, we consider more generally the irreducible Brauer characters $\chi\in\ibr_\ell(Sp_6(q))$ in characteristic $\ell\neq 2$ which restrict irreducibly to $G_2(q)$.

\begin{theorem}\label{thm:charswhichrestrictG2}
Let $G=Sp_6(q)$, $H=G_2(q)$ with $q\geq 4$ even.  Let $\ell\neq 2$ and suppose $\chi\in\ibr_\ell(G)$ is one of the following:
\begin{itemize}
    \item $\widehat{\rho}_3^1 - \left\{\begin{array}{cc}
                                                                             1, & \ell|\frac{q^3-1}{q-1},\\
                                                                             0, & \hbox{otherwise}
                                                                           \end{array}\right.$,
                                                                            \item $\widehat{\tau}_3^i$, $1\leq i\leq ((q-1)_{\ell'}-1)/2$,
                                                                                \item $ \widehat{\alpha}_3$, \item $\widehat{\zeta}_3^i$, $1\leq i\leq ((q+1)_{\ell'}-1)/2$.
\end{itemize}
Then $\chi|_H\in\ibr_\ell(H)$.
\end{theorem}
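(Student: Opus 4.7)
The plan is to reduce to the complex case settled in Theorem~\ref{thm:resultcomplexforG2} and then to verify, on $\ell$-regular elements, that the resulting restrictions remain irreducible as Brauer characters of $H = G_2(q)$, using the known decomposition matrices of $G_2(q)$ from Hiss, Hiss--Shamash, and Shamash.

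Each of the four families of Brauer characters in the statement is the restriction to $\ell$-regular elements of an ordinary character of $G$ (possibly modified by subtracting $1_G$ in the exceptional case for $\wh{\rho}_3^1$). By Theorem~\ref{thm:resultcomplexforG2}, the ordinary characters $\rho_3^1, \alpha_3, \tau_3^i, \zeta_3^i$ restrict irreducibly to $H$, with restrictions equal respectively to $\theta_2, \theta_2', \chi_3(i), \chi_3'(i)$ in the notation of \cite{EnomotoYamada}. Consequently, the restriction of each Brauer character in the statement to $H$ is either $\wh{\theta}_2, \wh{\theta}_2', \wh{\chi}_3(i), \wh{\chi}_3'(i)$, or $\wh{\theta}_2 - 1_H$ in the exceptional case, and it remains only to show that each of these is irreducible as an $\ell$-Brauer character of $H$.

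For $\wh{\tau}_3^i$ and $\wh{\zeta}_3^i$, the index range $1 \leq i \leq ((q \mp 1)_{\ell'}-1)/2$ precisely parametrizes $\ell$-regular choices of the parameter, and the corresponding Brauer characters $\wh{\chi}_3(i)$ and $\wh{\chi}_3'(i)$ of $G_2(q)$ are irreducible by the decomposition tables in \cite{Hiss89, HissShamash90, shamash87, shamash89, shamash92}. For $\wh{\alpha}_3$, the character $\theta_2'$ has irreducible reduction modulo every odd prime dividing $|G_2(q)|$, again from the above references. For $\wh{\rho}_3^1$, the character $\theta_2$ has irreducible reduction unless $\ell \mid (q^3-1)/(q-1) = q^2+q+1$, in which case $\wh{\theta}_2$ contains $1_H$ with multiplicity one and $\wh{\theta}_2 - 1_H$ is irreducible. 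This exactly parallels the subtraction of $1_G$ from $\wh{\rho}_3^1$ on the $G$ side recorded in Theorem~\ref{thm:lowdimreps}(A)(1).

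The main obstacle is the bookkeeping required to translate among the character labelings used in \cite{Luebeckthesis, EnomotoYamada, TiepGuralnick04, white2000} and the Brauer character references cited above, together with a case analysis for $\ell$ dividing each of $q-1, q+1, q^2+1, q^2+q+1,$ and $q^2-q+1$ (with $\ell = 3$ sometimes requiring separate treatment). The most delicate point is verifying that $1_H$ occurs in $\wh{\theta}_2$ with multiplicity exactly one when $\ell \mid q^2+q+1$, so that the subtraction in the statement produces an honest irreducible Brauer character of $H$ and the $G$-side and $H$-side modifications are compatible.
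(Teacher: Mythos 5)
Your proposal is correct and follows essentially the same route as the paper: reduce to the ordinary case via Theorem~\ref{thm:resultcomplexforG2}, identify the restrictions with the characters $\theta_2,\theta_2',\chi_3(i),\chi_3'(i)$ of $G_2(q)$, and then check irreducibility of their reductions case by case (according to which of $q-1$, $q+1$, $q^2\pm q+1$, $q^2+1$ is divisible by $\ell$, with $\ell=3$ treated separately) using the decomposition matrices and Brauer trees of Hiss, Hiss--Shamash, and Shamash. The delicate point you flag --- that $1_H$ occurs with multiplicity one in $\wh{\theta}_2$ exactly when $\ell\mid q^2+q+1$ (including $\ell=3\mid(q-1)$) --- is precisely what the paper reads off from the relevant Brauer tree and decomposition table.
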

\begin{proof}
We may assume that $\ell||G|$, since otherwise the result follows from \prettyref{thm:resultcomplexforG2}.  We consider the cases $\ell$ divides $(q-1), (q+1), (q^2-q+1),(q^2+q+1),$ and $(q^2+1)$ separately.

If $\ell|(q-1)$, then $(\rho_3^1)|_{H}=X_{15}$ in \cite{HissShamash90},\cite{Hiss89}.  From \cite[Table I]{HissShamash90}, we see that if $\ell=3$, then indeed $\widehat{X}_{15}-1_H$ is an irreducible Brauer character of $H$.  From \cite{Hiss89}, we see that if $\ell\neq 3$, then $\widehat{X}_{15}$ is an irreducible Brauer character.  We also see that $(\alpha_3)|_H$ has defect $0$, so indeed $(\widehat{\alpha}_3)|_H\in\ibr_\ell(H)$.

By \cite{HissShamash90} and \cite{Hiss89}, $(\widehat{\zeta}_3^i)_H=\widehat{X}'_{2a}$ is an irreducible Brauer character, and the $((q-1)_{\ell'}-1)/2$ characters $(\widehat{\tau}_3^i)|_H=\widehat{X}'_{1b}$ which lie outside the the principal block are also irreducible Brauer characters, completing the proof in the case $\ell|(q-1)$.

Now let $\ell|(q+1)$.  In this case, Hiss and Shamash show in \cite{HissShamash90} and \cite{Hiss89} that $(\widehat{\tau}_3^i)|_H=\widehat{X}'_{1b}$ is an irreducible Brauer character and the $((q+1)_{\ell'}-1)/2$ characters  $(\widehat{\zeta}_3^i)_H=\widehat{X}'_{2a}$ lying outside the principal block are irreducible Brauer characters.  Also, from \cite[Section 3.3]{HissShamash90} and \cite[Section 2.2]{Hiss89}, $\widehat{X}_{17}=\widehat{\alpha}_3|_H\in\ibr(H)$.  Finally, note that $(\rho_3^1)|_H$ has defect 0, which completes the proof in the case $\ell|(q+1)$.

Suppose $\ell|(q^2-q+1)$, where $\ell\neq 3$.  From \cite[Section 2.1]{shamash92}, we see that $X_{17}$ lies in the principal block with cyclic defect group and that $\widehat{X}_{17}\in\ibr(H)$.  As this character is the restriction of $\alpha_3$ to $H$, we have $(\widehat{\alpha}_3)|_H\in\ibr(H)$.  We see from their degrees that $X_{15}, X_{1b}'$, and $X_{2a}'$ are all of defect $0$, so their restrictions to $\ell$-regular elements are irreducible Brauer characters of $H$.  But these are exactly the restrictions to $H$ of the characters $\rho_3^1, \tau_3^i,$ and $\zeta_3^i$, respectively, which completes the proof in the case $\ell|(q^2-q+1)$.

Now assume $\ell|(q^2+q+1)$, where $\ell\neq 3$.  Then from the Brauer tree for $H$ given in \cite[Section 2.1]{shamash92}, we see that $\widehat{X}_{15}-1\in\ibr(H)$, and since $(\rho_3^1)|_H=X_{15}$ in Shamash's notation, this shows that $\widehat{\rho_3^1}-1$ restricts irreducibly to $H$.  Also, $X_{17}, X_{2a}',$ and $X_{1b}'$ have defect 0, so $\widehat{X}_{17},\widehat{X}_{2a}',$ and $\widehat{X}_{1b}'\in\ibr(H)$ as well.  As $(\alpha_3)|_H=X_{17}, (\zeta_3^k)|_H=X_{2a}',$ and $(\tau_3^k)|_H=X_{1b}'$ in Shamash's notation, it follows that all of the characters claimed indeed restrict irreducibly to $H$, completing the proof in the case $\ell|(q^2+q+1)$.

Finally, if $\ell|(q^2+1)$, then $\ell$ does not divide $|H|$, which means that $\ibr(H)=\irr(H)$, and the result is clear from \prettyref{thm:resultcomplexforG2}.

\end{proof}

\begin{theorem}\label{thm:resultforG2}
Let $G=Sp_6(q)$, $H=G_2(q)$ with $q\geq 4$ even.  Suppose that $V$ is an absolutely irreducible $G$-module in characteristic $\ell\neq 2$.  Then $V$ is irreducible over $H$ if an only if the $\ell$-Brauer character afforded by $V$ is one of the Weil characters \begin{itemize}
    \item $\widehat{\rho}_3^1 - \left\{\begin{array}{cc}
                                                                             1, & \ell|\frac{q^3-1}{q-1},\\
                                                                             0, & \hbox{otherwise}
                                                                           \end{array}\right.$,
                                                                            \item $\widehat{\tau}_3^i$, $1\leq i\leq ((q-1)_{\ell'}-1)/2$,
                                                                                \item $ \widehat{\alpha}_3$, \item $\widehat{\zeta}_3^i$, $1\leq i\leq ((q+1)_{\ell'}-1)/2$.
\end{itemize}
\end{theorem}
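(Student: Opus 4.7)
\textbf{Proof proposal for Theorem \ref{thm:resultforG2}.}
The reverse direction of the biconditional is exactly Theorem \ref{thm:charswhichrestrictG2}, so the task reduces to proving the forward direction: if $\chi\in\ibr_\ell(G)$ satisfies $\chi|_H\in\ibr_\ell(H)$, then $\chi$ lies in the listed family. My plan is to combine the degree bound from Lemma \ref{lem:maxmindegrees} with the classification of low-dimensional Brauer characters in Theorem \ref{thm:lowdimreps}, then eliminate each remaining candidate either by a direct degree comparison against $\ibr_\ell(G_2(q))$ (using the Brauer character information from \cite{Hiss89}, \cite{HissShamash90}, \cite{shamash87}, \cite{shamash89}, \cite{shamash92}), or by lifting to the ordinary setting via Lemmas \ref{lem:ifacharacterlifts} and \ref{lem:Lemma1forO6} and invoking Theorem \ref{thm:resultcomplexforG2}, or by appealing to the explicit decomposition in Theorem \ref{thm:decompofrho2andbeta} together with the class fusion of Section \ref{sec:fusionG2Sp6even}.

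First I would apply Lemma \ref{lem:maxmindegrees} to get $\chi(1)\leq \mathfrak{m}_\ell(H)\leq\mathfrak{m}(H)$, where $\mathfrak{m}(G_2(q))$ is read off from \cite{EnomotoYamada} or \cite{luebeckwebsite}. The bounds $D$ appearing in cases A(4) and B(4) of Theorem \ref{thm:lowdimreps} grow like $\tfrac{1}{2}q^8$, well beyond $\mathfrak{m}(H)$, so those cases are immediately eliminated for $q\geq 4$. The same comparison rules out case A(3) (whose minimum degree is $q^3(q^4+q^2+1)-q(q+1)(q^3+1)/2$) and the $c_{8,0}$, $c_{10,0}$ family in B(3) (whose degrees are $(q\pm 1)(q^2+1)(q^4+q^2+1)$), which all exceed $\mathfrak{m}(H)$.

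For the remaining non-Weil candidates -- namely $\wh{\chi}_6$ and its $\ell$-modular shifts from A(2), and the two semisimple characters of degrees $(q^2+1)(q\mp 1)^2(q^2\pm q+1)$ from B(2) -- I would first compare degrees to the explicit list of $\ibr_\ell(H)$ degrees in the Hiss--Shamash papers for each of the arithmetic cases $\ell\mid q-1,\,q+1,\,q^2+1,\,q^2+q+1,\,q^2-q+1$; in every case no such degree, or that degree minus $1$, appears in $\ibr_\ell(H)$, so Lemmas \ref{lem:ifacharacterlifts} and \ref{lem:Lemma1forO6} combined with the ordinary-character conclusion of Theorem \ref{thm:resultcomplexforG2} exclude these candidates. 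Finally, among the Weil characters in A(1) and B(1) I must still rule out $\wh{\rho}_3^2-[\ell\mid q^3+1]$ and $\wh{\beta}_3-[\ell\mid q+1]$; for these Theorem \ref{thm:decompofrho2andbeta} produces the ordinary decompositions $\rho_3^2|_H=\theta_1+\theta_4$ and $\beta_3|_H=\theta_1'+\theta_4$, and restricting these identities to $\ell$-regular classes via the fusion table of Section \ref{sec:fusionG2Sp6even} shows the reduced Brauer character still splits as a sum of at least two distinct summands (using that $\theta_1,\theta_1',\theta_4$ have distinct lifts and remain nontrivial on $\ell$-regular elements).

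I expect the main obstacle to be the case $\ell\mid (q+1)$, both because White's decomposition matrix for the principal unipotent block of $G$ contains unknowns $\beta_1,\beta_2,\beta_3$ (so the Brauer character degrees of $G$ in A(2), A(3) are only bounded, not pinned down), and because here $\theta_1'$ may itself become reducible modulo $\ell$, so care is needed to confirm that the putative decomposition of $\wh{\beta}_3-1$ inherited from characteristic zero does not accidentally collapse into a single irreducible Brauer character. The bounds on the unknowns from the proof of Theorem \ref{thm:lowdimreps}(A), together with a direct comparison of the scalar products computed on $\ell$-regular classes using the fusion table of Section \ref{sec:fusionG2Sp6even}, should resolve these coincidences and complete the argument.
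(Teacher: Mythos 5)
Your proposal is correct in outline and, for most of its length, follows the same route as the paper: bound $\chi(1)$ by $\mathfrak{m}(H)$ via \prettyref{lem:maxmindegrees}, feed that bound into \prettyref{thm:lowdimreps} to kill cases A(3), A(4), B(3), B(4), dispose of every surviving candidate that lifts to characteristic zero through \prettyref{lem:ifacharacterlifts} and \prettyref{thm:resultcomplexforG2}, and then handle the few non-liftable characters ($\wh{\chi}_6-1_G$, $\wh{\rho}_3^2-1_G$, $\wh{\beta}_3-1_G$; note $\wh{\rho}_3^1-1_G$ is already on the allowed list) separately. One small slip: the minimal degree in case A(3) is not $q^3(q^4+q^2+1)-q(q+1)(q^3+1)/2$ but the $\ell\mid(q+1)$ entry $(q-1)(q^2+1)(q^4+q^2+1)-q(q-1)(q^3-1)/2$; it still exceeds $\mathfrak{m}(H)$ for $q\geq 4$, so the conclusion is unaffected.

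The genuine divergence is your treatment of $\wh{\rho}_3^2-1_G$ and $\wh{\beta}_3-1_G$. The paper excludes these by descending into the Hiss--Shamash decomposition matrices of $G_2(q)$ separately for each arithmetic condition on $\ell$ (e.g.\ $\wh{X}_{14}-1\in\ibr_\ell(H)$ when $\ell\mid(q+1)$, $\ell\neq 3$, and $\wh{X}_{14}+\wh{X}_{18}-1=\varphi_{14}+2\varphi_{18}$ when $\ell=3$), whereas you argue only from the characteristic-zero decompositions $\rho_3^2|_H=\theta_1+\theta_4$ and $\beta_3|_H=\theta_1'+\theta_4$ of \prettyref{thm:decompofrho2andbeta}. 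That route does work and is in fact cleaner, but not for the reason you give: scalar products computed on $\ell$-regular classes do not detect irreducibility of a Brauer character (the irreducible Brauer characters are not orthonormal there), and the observation that $\wh{\theta}_1,\wh{\theta}_4$ are distinct and nonzero does not by itself rule out the collapse $\wh{\theta}_1+\wh{\theta}_4-1_H\in\ibr_\ell(H)$ that you rightly worry about. The correct closing move is the paper's own \prettyref{lem:Lemma1forO6}: since $\rho_3^2|_H$ and $\beta_3|_H$ are each a sum of two nonlinear ordinary irreducibles, $\chi|_H-\lambda\notin\irr(H)$ for every $\lambda\in\wh{H}\cup\{0\}$, and the lemma then gives $\wh{\chi}|_H-1_H\notin\ibr_\ell(H)$ for all $\ell$ at once. (Concretely: if $\wh{\theta}_1+\wh{\theta}_4=1_H+\varphi$ with $\varphi\in\ibr_\ell(H)$, then one of $\wh{\theta}_1,\wh{\theta}_4$ would have $1_H$ as its unique irreducible constituent, impossible as both have degree greater than $1$.) With that one repair your argument is complete, and it buys you a uniform exclusion of these two characters that avoids the case-by-case decomposition-matrix bookkeeping for $G_2(q)$ carried out in the paper.
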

\begin{proof}
 If $V$ affords one of the characters listed, then $V$ is irreducible on $H$ by \prettyref{thm:charswhichrestrictG2}.  Conversely, assume that $V$ is irreducible on $H$ and let $\chi\in\ibr_\ell(G)$ denote the $\ell$-Brauer character afforded by $V$.  If $\chi$ lifts to a complex character, then the result follows from \prettyref{thm:resultcomplexforG2}, so we assume $\chi$ does not lift.   We may therefore assume that $\ell$ is an odd prime dividing $|G|$.   We note that $\chi(1)\leq \mathfrak{m}(H)\leq(q+1)^2(q^4+q^2+1)$ by \cite{luebeckwebsite}, and if $q=4$, then $\mathfrak{m}(H)=q(q+1)(q^4+q^2+1)$.

Since $(q-1)(q^2+1)(q^4+q^2+1)>\mathfrak{m}(H)$ when $q\geq 4$, it follows from part (B) of \prettyref{thm:lowdimreps} that either $\chi$ lifts to an ordinary character or $\chi$ lies in a unipotent block of $G$.  In the first situation, \prettyref{thm:resultcomplexforG2} implies that $\chi$ is in fact one of the characters listed in the statement.  Therefore, we may assume that $\chi$ lies in a unipotent block of $G$ and does not lift to a complex character.

Since $\mathfrak{m}(H)$ is smaller than the degree of each of the characters listed in situation A(3) of \prettyref{thm:lowdimreps}, we see that the only irreducible Brauer characters which do not lift to a complex character and whose degree does not exceed $\mathfrak{m}(H)$ are $\wh{\rho}_3^2-1_G$ and $\wh{\beta}_3-1_G$ when $\ell|(q+1)$, $\wh{\rho}_3^2-1_G$ in the case $3\neq\ell|(q^2-q+1)$, $\wh{\rho}_3^1-1_G$ in the case $\ell|(q^2+q+1)$, and $\wh{\chi}_6-1_G$ when $\ell|(q^2+1)$.

From \prettyref{thm:decompofrho2andbeta}, we know that $(\rho_3^2)|_{G_2(q)}=\theta_1+\theta_4$ and $(\beta_3)|_{G_2(q)}=\theta_1'+\theta_4$ in the notation of \cite{EnomotoYamada}.  Also, $\theta_4=X_{14}, \theta_1=X_{16},$ and $\theta_1'=X_{18}$ in the notation of Shamash and Hiss.

Suppose $\ell|(q+1)$.  From \cite[Section 2.2]{Hiss89}, we know that $\widehat{X}_{14}-1\in\ibr_\ell(H)$ when $\ell\neq 3$, and therefore neither $\widehat{\rho_3^2}-1$ nor $\widehat{\beta}_3-1$ can restrict irreducibly to $\ibr_\ell(H)$.  If $\ell=3$, then by \cite[Section 3.3]{HissShamash90},  $\widehat{X}_{14}+\widehat{X}_{18}-1\not\in\ibr_\ell(H)$, since this is $\varphi_{14}+2\varphi_{18}$ in the notation of \cite[Table II]{HissShamash90}.  Similarly, $\widehat{X}_{14}+\widehat{X}_{16}-1\not\in\ibr_\ell(H)$, so we have shown that if $\ell=3$, again neither $\widehat{\rho_3^2}-1$ nor $\widehat{\beta}_3-1$ can restrict irreducibly to $\ibr_\ell(H)$.

Suppose $\ell|(q^2-q+1)$, where $\ell\neq 3$.  From \cite[Section 2.1]{shamash92}, the Brauer character $\widehat{X}_{16}-1$ is irreducible, and $X_{14}$ is defect zero, so $\widehat{X}_{14}$ is also irreducible.  But this means that $\widehat{X}_{14}+\widehat{X}_{16}-1$ is not irreducible.  Recalling again that $X_{14}=\theta_4$ and $X_{16}=\theta_1$, this shows that $\widehat{\rho}_3^2-1_G$ does not restrict irreducibly to $H$.

If $\ell|(q^2+q+1)$, then we are done by \prettyref{thm:charswhichrestrictG2}.  Finally, if $\ell|(q^2+1)$, then $\ell$ cannot divide $|H|$, which means that $\ibr_\ell(H)=\irr(H)$, and every irreducible Brauer character of $H$ lifts to $\C$.  Since the degree of $\widehat{\chi}_6-\widehat{\chi}_1$ is not the degree of any element of $\irr(H)$, we know $\chi$ cannot be $\widehat{\chi}_6-\widehat{\chi}_1$, and the proof is complete.

\end{proof} 
\subsubsection{Descent to Subgroups of $G_2(q)$}
We now consider subgroups $H$ of $Sp_6(q)$ such that $H<G_2(q)$.  In \cite{Nguyen08}, Nguyen finds all triples as in \prettyref{prob:aschbakerscottprogram} when $G=G_2(q)$ and $H$ is a maximal subgroup.  Noting that none of the representations described in \cite{Nguyen08} to give triples for $G=G_2(q)$ come from the Weil characters listed in \prettyref{thm:resultforG2}, it follows that there are no proper subgroups of $H$ of $G_2(q)$ that yield triples as in \prettyref{prob:aschbakerscottprogram} for $G=Sp_6(q)$. 

\section{Restrictions of Irreducible Characters of $Sp_6(q)$  to the Subgroups $O_6^{\pm}(q)$}\label{sec:resttoO6}
In this section, let $q\geq 4$ be a power of $2$, $G=Sp_6(q)$, and $H^\pm\cong O_6^\pm(q)$ as a subgroup of $G$.  Since $q$ is even, we have $H^\pm=\Omega_6^\pm(q).2\cong L_4^\pm(q).2$ (see \cite[Chapter 2]{KleidmanLiebeck}).  We will denote by $K^\pm$ the index-$2$ subgroup $L_4^\pm(q)$ of $H^\pm$.  We at times may simply refer to $H, K$ rather than $H^\pm, K^\pm$ if the result is true in either case.

The purpose of this section is to show that restrictions of nontrivial representations of $G$ to $H$ are reducible.  We again begin with the complex case.
\begin{theorem}\label{thm:complexresultO6}
Let $G=Sp_6(q)$ and $H=O^\pm_6(q)$, with $q\geq 4$ even.  If $1_G\neq\chi\in\irr(G)$, then $\chi_H$ is reducible.
\end{theorem}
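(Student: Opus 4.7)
The strategy is proof by contradiction. Suppose some nontrivial $\chi\in\irr(G)$ restricts irreducibly to $H=O_6^\pm(q)$. By \prettyref{lem:maxmindegrees} we have $\chi(1)\leq \mathfrak{m}(H^\pm)$, and since $H^\pm = L_4^\pm(q).2$, Clifford theory together with L\"ubeck's degree data \cite{luebeckwebsite} yields $\mathfrak{m}(H^\pm)\leq 2\mathfrak{m}(L_4^\pm(q))$, a polynomial bound in $q$ that is substantially smaller than the large-degree threshold appearing in the ordinary analog of \prettyref{thm:lowdimreps}, namely \cite[Theorem 6.1]{TiepGuralnick04}. Note also that $\chi\neq 1_G$ combined with the fact that $H$ is not normal rules out the trivial restriction.

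Applying \cite[Theorem 6.1]{TiepGuralnick04} forces $\chi$ to lie in a short list: the six Weil characters $\rho_3^1,\rho_3^2,\tau_3^i,\alpha_3,\beta_3,\zeta_3^i$, together with at most a handful of small-degree unipotent or semisimple characters (the ordinary analogs of the $\wh{\chi}_6,\wh{\chi}_7$ entries of Part A of \prettyref{thm:lowdimreps}). For each remaining candidate I would verify reducibility of $\chi|_H$ by computing $[\chi|_H,\chi|_H]_H$ and showing it exceeds $1$. For the Weil characters, this uses the explicit formulas \eqref{formulafortau} and \eqref{formulaforzeta} applied to representatives of $H^\pm$-classes, together with the observation that $H^\pm$ has strictly more orbits on eigenspace-dimension data in the natural module $\F_q^6$ than $G$ does, because the preserved quadratic form distinguishes singular from non-singular vectors that are lumped together under $G$. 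For the remaining small characters, the scalar product will be evaluated via the fusion of $H$-conjugacy classes into $G$-conjugacy classes and the character values tabulated in \cite{Luebeckthesis}.

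The principal obstacle is computing the fusion of conjugacy classes from $O_6^\pm(q)\cong L_4^\pm(q).2$ into $Sp_6(q)$, since the isomorphism proceeds through the Pl\"ucker/exterior-square embedding $\wedge^2\colon L_4^\pm(q)\hookrightarrow Sp_6(q)$, and matching unipotent and mixed class representatives under this embedding—while handling the $+$ and $-$ types in parallel—is the most technically involved step, reminiscent of the $G_2(q)\hookrightarrow Sp_6(q)$ fusion calculation carried out in \prettyref{sec:resttoG2}. A secondary difficulty is that for very small $q$ (such as $q=4$), the bound $\mathfrak{m}(H^\pm)$ and the Tiep--Guralnick threshold come uncomfortably close, so the candidate list is longer and a few cases may need to be verified character-by-character using \cite{GAPctlib}.
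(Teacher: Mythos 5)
Your strategy (bound $\chi(1)$ by $\mathfrak{m}(H)$, invoke the ordinary low-dimensional classification \cite[Theorem 6.1]{TiepGuralnick04} to get a short candidate list, then kill each candidate by showing $[\chi|_H,\chi|_H]_H>1$ via class fusion) is workable in principle, but it is genuinely different from, and considerably heavier than, what the paper does. The paper never computes the fusion of $O_6^\pm(q)$-classes into $Sp_6(q)$. Instead it exploits the structure $H^\pm=K^\pm.2$ with $K^\pm\cong L_4^\pm(q)$: by Clifford theory an irreducible restriction forces $\chi(1)=e\,\phi(1)$ with $e\in\{1,2\}$ and $\phi\in\irr(K^\pm)$, and a direct comparison of the degree lists of $Sp_6(q)$ and $L_4^\pm(q)$ from \cite{luebeckwebsite} eliminates every candidate for $q>4$ except the two semisimple characters $\chi_{8,1},\chi_{9,1}$ of \cite{Luebeckthesis}; those two are then dispatched by looking only at values on involution classes (so only a tiny piece of fusion data is ever needed), and the handful of extra degree coincidences at $q=4$ are settled in GAP. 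This degree-list comparison against the \emph{index-two subgroup} $K^\pm$, rather than against $H^\pm$ itself, is the labor-saving idea your proposal is missing: it makes the Weil characters and almost all other candidates disappear without any inner-product computation.

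Two cautions about your version as written. First, the observation that ``$H^\pm$ has strictly more orbits on eigenspace-dimension data than $G$'' is not by itself evidence that $[\chi|_H,\chi|_H]_H>1$; a finer class structure in $H$ says nothing about the norm of the restricted character until you actually evaluate the sum $\frac{1}{|H|}\sum_h|\chi(h)|^2$, which requires the full fusion and class sizes — precisely the step you identify as the hardest and leave unexecuted. Second, your threshold comparison is tighter than ``uncomfortably close'' suggests: the generic bound $2\mathfrak{m}(L_4^+(q))\le 2(q+1)^2(q^2+1)(q^2+q+1)$ actually \emph{exceeds} the Tiep--Guralnick cutoff at $q=4$, and one needs the exact value $\mathfrak{m}(L_4(4))=7140$ (as the paper notes in the proof of \prettyref{thm:resultforO6}) to close that case. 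So the approach can be made to work, but only after supplying the fusion computation (or, better, replacing it by the paper's degree comparison with $K^\pm$) and after tightening the $q=4$ bound.
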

\begin{proof}
Assume that $\chi|_H$ is irreducible.  For the list of irreducible complex character degrees of $K^\pm\cong L_4^\pm(q)$ and $G=Sp_6(q)$, we refer to \cite{luebeckwebsite}.  From Clifford theory, $\chi_H$ has degree $e\cdot \phi(1)$ where $e\in\{1,2\}$ and $\phi\in\irr(K^\pm)$.  Inspecting the list of character degrees for $K^\pm$ and for $G$, it follows that for $q>4$, the only option for $\chi(1)$ is $(q^2+1)(q^2-q+1)(q+1)^2$ in case $-$ and $(q^2+1)(q^2+q+1)(q-1)^2$ in case $+$, and that $e=1$.  Hence from \cite{Luebeckthesis}, $\chi$ is $\chi_{8,1}$, or $\chi_{9,1}$, respectively.  However, by inspecting the character values on involutory classes, it is clear that neither of these characters restrict irreducibly to $H^\pm$.  (Here we have used the character tables for $GL_4(q)\cong C_{q-1}\times L^+_4(q)$ and $GU_4(q)\cong C_{q+1}\times L^-_4(q)$ constructed by F. L{\"u}beck for the CHEVIE system \cite{chevie}.)  Therefore, for $q>4$, $\chi|_H$ must be reducible.

In the case $q=4$, there are additional character degrees $\phi(1)$ of $K$ for which $2\phi(1)$ is a character degree for $G$.  These degrees are $221$ and $325$ for $K^-\cong SU_4(4)$, or $189$ and $357$ for $K^+\cong SL_4(4)$.  For each of these degrees, there is exactly one character of $Sp_6(4)$ with twice that degree.

Suppose that $\chi(1)=442$.  Then $\chi=\beta_3$, and using the GAP Character Table Library \cite{GAPctlib} and calculation in GAP, we see that $\beta_3$ restricts to $K^-$ as the sum of the two characters of $K^-$ of degree $221$.  Moreover, calculation in GAP \cite{GAP4} shows that these two characters are fixed by the order-2 automorphism of $K^-$ inside $H^-$ given by $\tau\colon (a_{ij})\mapsto (a_{ij}^q)$, and hence extend to $H^-$.  (Note that $\tau$ is the automorphism of $K^-$ inside $H^-$, since $\mathrm{Out}(K^-)$ is cyclic so has only one order-2 outer automorphism.)  Thus the restriction of $\beta_3$ is reducible.

There are two characters of degree $189$ in $\irr(SL_4(4))$, and one of degree $378$ in $G$ (namely, $\alpha_3$), and from direct calculation in GAP, we see that the restriction of $\alpha_3$ to $K^+$ is the sum of these two characters.   The order - 2 automorphism of $K^+$ inside $H^+$ is given by the graph automorphism $\sigma\colon A\mapsto (A^{-1})^T$.  (Indeed, by \cite[Chapter 2]{KleidmanLiebeck}, the isomorphism of $L_4^+(q)$ with $\Omega_6^+(q)$ is given by the identification of $A\in L_4^+(q)$ with its action on the second wedge space of the natural module, and $\Omega_6^+(q)$ is the index-2 subgroup of $O_6^+(q)$ composed of elements that can be written as a product of an even number of reflections.  Hence it suffices to note that $\sigma$ can be identified with conjugation in $O_6^+(q)$ by a suitable product of an odd number of reflections.)  Again using calculations in GAP, we see that these characters of $K^+$ extend to irreducible characters of $H^+$, since they are fixed by $\sigma$.  Thus the restriction of $\alpha_3$ is reducible.

There is exactly one character, $\phi$, of degree $325$ in $\irr(SU_4(4))$, which means that if $\chi(1)=650$, then $\chi|_{K^-}=2\phi$.  Now, as $H^-/K^-$ is cyclic and $\phi$ is $H^-$-invariant, we see that $\phi$ must extend to a character of $H^-$, so $\chi|_{K^-}\neq 2\phi$.

Similarly, there is exactly one character, $\phi$, of degree $357$ in $\irr(SL_4(4))$, which means that if $\chi(1)=714$, then the restriction of $\chi$ to $K^+$ is twice this character.  Again, as $H^+/K^+$ is cyclic and $\phi$ is $H^+$-invariant, this is not the case.

\end{proof}

\begin{lemma}\label{lem:Lemma2forO6}
Let $q\geq4$ and let $\chi\in\irr(G)$ be one of the characters $\chi_2,\chi_3,\chi_4,\chi_6$ in the notation of \cite{white2000}.  If $\chi|_H-\lambda\in\irr(H)$ for $\lambda\in \wh{H}$, then the restriction to $K$ also satisfies $\chi|_K-\lambda|_K\in\irr(K)$.
\end{lemma}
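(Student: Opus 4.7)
The plan is to deduce the claim from a parity argument on degrees via Clifford theory applied to the index-$2$ inclusion $K \trianglelefteq H$. Since $|H:K|=2$, Clifford's theorem tells us that for any $\psi \in \irr(H)$ the restriction $\psi|_K$ is either irreducible, or else decomposes as $\phi + \phi^h$ with $\phi \in \irr(K)$, $h \in H \setminus K$, and $\phi, \phi^h$ distinct of common degree $\psi(1)/2$. In particular, if $\psi(1)$ is odd, the second alternative cannot occur and $\psi|_K$ is automatically irreducible.

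I would apply this to $\psi := \chi|_H - \lambda \in \irr(H)$. Since $\lambda$ is linear we have $\psi(1) = \chi(1) - 1$ and $(\chi|_H-\lambda)|_K = \chi|_K - \lambda|_K$, so it suffices to verify that $\chi(1)$ is even for each of $\chi \in \{\chi_2,\chi_3,\chi_4,\chi_6\}$. Recalling
\[\chi_2(1) = \tfrac{q(q^2+q+1)(q^2+1)}{2}, \quad \chi_3(1) = \tfrac{q(q^2-q+1)(q^2+1)}{2}, \quad \chi_4(1) = \tfrac{q(q+1)(q^3+1)}{2},\]
and $\chi_6(1) = q^2(q^4+q^2+1)$, and writing $q=2^a$ with $a\geq 2$, each of $q+1$, $q^2+q+1$, $q^2-q+1$, $q^2+1$, $q^3+1$ is odd, so the first three degrees have the form $(q/2)\cdot(\text{odd})$ with $q/2 = 2^{a-1}$ even, while the last is divisible by $q^2$. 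Hence every $\chi(1)$ in question is even, $\psi(1)$ is odd, and Clifford theory forces $\chi|_K-\lambda|_K = \psi|_K \in \irr(K)$.

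I do not anticipate any real obstacle — the argument reduces to a Clifford-theoretic parity statement plus the one-line degree check above. The only subtle point worth flagging is that the hypothesis $q\geq 4$ is genuinely used here: at $q=2$ one has $\chi_2(1) = 35$, which is odd, and the parity argument would break down, which is consistent with the fact that the case $q=2$ is handled separately in the paper.
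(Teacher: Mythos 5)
Your proof is correct and is essentially identical to the paper's: both invoke Clifford theory for the index-$2$ subgroup $K\trianglelefteq H$ and conclude from the oddness of $\chi(1)-1$ that the restriction cannot split. You merely make explicit the degree computations showing each $\chi(1)$ is even (and the instructive failure at $q=2$), which the paper leaves to the reader.
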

\begin{proof} Writing $\theta:=\chi|_H-\lambda\in\irr(H)$ and noting $[H:K]=2$, we know by Clifford theory that $\theta_K=\sum_{i=1}^t\theta_i$ where $\theta_i\in\irr(K)$,  each $\theta_i$ has the same degree, and $t|2$.  Since $\theta(1)=\chi(1)-1$ is odd, it follows that $\theta_K$ is irreducible.
\end{proof}
\begin{lemma}\label{lem:Lemma3forO6}
Let $q\geq 4$ and $\chi$ be one of the characters as in \prettyref{lem:Lemma2forO6}.  Then $\chi|_H-\lambda\not\in\irr(H)$ for any $\lambda\in \wh{H}\cup\{0\}$.  In particular, $\wh{\chi}_H-1_H\not\in\ibr_\ell(H)$ for any prime $\ell$.
\end{lemma}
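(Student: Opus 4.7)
The plan is to reduce the statement to a question about the index-$2$ subgroup $K = \Omega_6^\pm(q) \cong L_4^\pm(q)$ of $H$ via \prettyref{lem:Lemma2forO6}, and then rule out the remaining possibility by a degree comparison.

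Since $K$ is a nonabelian simple group for $q\geq 4$, it is perfect, so every $\lambda\in\wh{H}$ satisfies $\lambda|_K = 1_K$. Hence, by \prettyref{lem:Lemma2forO6}, if $\chi|_H-\lambda\in\irr(H)$ for some $\lambda\in\wh{H}$, then $\chi|_K - 1_K \in \irr(K)$. The remaining case $\lambda = 0$ is immediate: \prettyref{thm:complexresultO6} already shows that $\chi|_H\notin\irr(H)$ whenever $1_G\neq\chi\in\irr(G)$. Thus the statement reduces to showing
\[
\chi|_K - 1_K \notin \irr(K) \quad \text{for each } \chi \in \{\chi_2,\chi_3,\chi_4,\chi_6\}.
\]

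To establish this, I would consult L\"ubeck's tables \cite{luebeckwebsite} for the irreducible character degrees of $L_4^\pm(q)$ and verify that none of
\[
\chi_2(1)-1, \quad \chi_3(1)-1, \quad \chi_4(1)-1, \quad \chi_6(1)-1
\]
coincides with the degree of any irreducible character of $K$, for either sign. For $q>4$ this amounts to comparing explicit polynomials in $q$, and the remaining finite check at $q=4$ can be carried out directly with the GAP character table library \cite{GAP4,GAPctlib}. The only real work is the bookkeeping of these comparisons; I do not anticipate any structural obstacle, since the degrees $\chi_i(1)-1$ do not factor in a way compatible with the standard parametrization of irreducible degrees of $L_4^\pm(q)$.

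Finally, the Brauer-character assertion is a direct consequence of \prettyref{lem:Lemma1forO6}: once we know $\chi|_H - \lambda \notin \irr(H)$ for every $\lambda\in\wh{H}\cup\{0\}$, that lemma yields $\wh{\chi}|_H - \mu \notin \ibr_\ell(H)$ for every linear Brauer character $\mu$ of $H$, and taking $\mu = 1_H$ gives the stated particular case.
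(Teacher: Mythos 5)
Your proposal is correct and follows essentially the same route as the paper: reduce to the index-$2$ simple subgroup $K\cong L_4^\pm(q)$ via Lemma \ref{lem:Lemma2forO6}, check that $\chi_i(1)-1$ is not an irreducible character degree of $K$ using L\"ubeck's tables, and conclude with Lemma \ref{lem:Lemma1forO6}. The only cosmetic difference is that you dispatch the $\lambda=0$ case by citing Theorem \ref{thm:complexresultO6}, whereas the paper re-verifies directly that neither $\chi(1)$ nor $\chi(1)/2$ occurs as a degree of $K$; both are valid.
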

\begin{proof}
Comparing degrees of characters of $G$ and $K$ (see, for example, \cite{luebeckwebsite}), we see that neither $\chi(1)$ nor $\chi(1)/2$ occur as a degree of an irreducible character of $K$ for any of these characters. Then by Clifford theory (see the argument in \prettyref{lem:Lemma2forO6}), we know that $\chi|_H\not\in\irr(H)$.  Moreover, $\chi(1)-1$ does not occur as an irreducible character degree for $K$, which means that $\chi|_K-\lambda_K\not\in\irr(K)$ for any $\lambda\in\wh{H}$.  Thus by \prettyref{lem:Lemma2forO6}, $\chi|_H-\lambda\not\in\irr(H)$ for any $\lambda\in\wh{H}$.  The last statement then follows by \prettyref{lem:Lemma1forO6}.

\end{proof}

We are now ready to prove the following theorem, which generalizes \prettyref{thm:complexresultO6} to the modular case:
\begin{theorem}\label{thm:resultforO6}
Let $H\cong O_6^{\pm}(q)$ be a maximal subgroup of $G=Sp_6(q)$, with $q\geq 4$ even, and let $\ell\neq 2$ be a prime.  If $\chi\in\ibr_\ell(G)$ with $\chi(1)>1$, then the restriction $\chi|_H$ is reducible.
\end{theorem}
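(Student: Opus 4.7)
The plan is to combine \prettyref{thm:lowdimreps} with \prettyref{thm:complexresultO6} and the reduction lemmas from \prettyref{sec:prelimobservations}. Suppose for contradiction that $\chi \in \mathrm{IBr}_\ell(G)$ has degree greater than $1$ and $\chi|_H$ is irreducible. By \prettyref{thm:lowdimreps}, $\chi$ falls into one of the explicit small-degree cases or has degree $\geq D$ (in A(4)) or $\geq B$ (in B(4)). For the large-degree cases, \prettyref{lem:maxmindegrees} forces $\chi(1) \leq \mathfrak{m}_\ell(H) \leq \mathfrak{m}(H)$; since $H = L_4^\pm(q).2$, Clifford theory bounds $\mathfrak{m}(H)$ by twice $\mathfrak{m}(L_4^\pm(q))$, a quantity one reads off from \cite{luebeckwebsite}. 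For $q \geq 4$, the bounds $D$ and $B$ comfortably exceed this, producing the desired contradiction.

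Next, for the small-degree Brauer characters that lift to ordinary characters — the Weil characters $\widehat{\alpha}_3, \widehat{\tau}_3^i, \widehat{\zeta}_3^j$, the characters $\widehat{\chi}_6$ and $\widehat{\chi}_7$ when they arise directly as $\wh{\chi}_6, \wh{\chi}_7$, the character $\widehat{\chi}_{28}$, and all characters in parts B(2) and B(3) — \prettyref{lem:ifacharacterlifts} gives an irreducible restriction of the corresponding ordinary character, contradicting \prettyref{thm:complexresultO6}. For the non-lifting Brauer characters of the form $\widehat{\chi}_i - 1_G$ with $\chi_i \in \{\rho_3^1, \beta_3, \rho_3^2, \chi_6\}$ appearing in parts A(1)--A(2) of \prettyref{thm:lowdimreps}, \prettyref{lem:Lemma3forO6} applies verbatim (noting that $1_H$ is the only linear Brauer character of $H$, since $H/[H,H]$ has order at most $2$) and rules them out.

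The only remaining Brauer characters are the two non-lifting exceptions $\widehat{\chi}_7 - \widehat{\chi}_4$ (when $\ell \mid q^2+1$) and $\widehat{\chi}_{35} - \widehat{\chi}_5 = \widehat{\chi}_7 - \widehat{\chi}_6 + \widehat{\chi}_3 - 1_G$ (when $\ell \mid q+1$) appearing in part A(3). These have degrees of order $q^7$, which may still lie below $\mathfrak{m}_\ell(H)$ in the borderline case $q = 4$. For these I would extend the reasoning behind \prettyref{lem:Lemma3forO6}: compute the restriction to $K := L_4^\pm(q)$ of each ordinary character in the decomposition using the fusion of classes, compare the degree of the virtual difference against the list of degrees in $\mathrm{Irr}(K)$ and $\mathrm{IBr}_\ell(K)$ from \cite{luebeckwebsite}, and verify via Clifford theory (using that $[H:K] = 2$) that no combination can be irreducible on $K$ nor a sum of two equal-degree irreducibles. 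The main obstacle will be exactly this last step: the two exceptional characters fail to match the form covered by \prettyref{lem:Lemma3forO6} and their degrees sit near the $\mathfrak{m}(H)$ threshold, so the argument must exploit finer character-table data, with a GAP verification in the borderline case $q=4$ analogous to the $Sp_6(2) \leq Sp_6(4)$ handling in the Reduction Theorem.
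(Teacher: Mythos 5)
Your proposal is correct and follows essentially the same route as the paper: use the degree bounds $\mathfrak{m}_\ell(H)\leq 2\,\mathfrak{m}_\ell(L_4^\pm(q))$ to discard everything in parts A(4) and B(4) of \prettyref{thm:lowdimreps}, reduce the lifting characters to \prettyref{thm:complexresultO6} via \prettyref{lem:ifacharacterlifts}, and dispose of the non-lifting characters of the form $\wh{\chi}_i-1_G$ via \prettyref{lem:Lemma3forO6}. The one place where you anticipate more work than is needed is the pair $\wh{\chi}_7-\wh{\chi}_4$ and $\wh{\chi}_{35}-\wh{\chi}_5$: the paper never touches character values or GAP here. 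For $q>4$ (and for $H^-$ always) their degrees already exceed $2\,\mathfrak{m}_\ell(K^\pm)$; for $q=4$ and $H^+$ one replaces the generic polynomial bound by the actual value $\mathfrak{m}_\ell(SL_4(4))\leq 7140$ from \cite{luebeckwebsite}, which eliminates $\wh{\chi}_7-\wh{\chi}_4$ (degree $16822>14280$), and the last survivor $\wh{\chi}_{35}-\wh{\chi}_5$ (degree $13545$) is killed by noting its degree is odd, so Clifford theory would force an irreducible restriction to the index-$2$ subgroup $K^+$ of degree $13545>7140$. So the ``main obstacle'' you flag is resolved by degree and parity alone. One small slip: $1_H$ is not the only linear $\ell$-Brauer character of $H=K.2$ for odd $\ell$ (there are two, since $H/[H,H]\cong C_2$), but this is harmless because \prettyref{lem:Lemma1forO6} and \prettyref{lem:Lemma3forO6} already exclude $\wh{\chi}|_H-\mu$ for every linear $\mu$, and only $\mu=1_H$ is ever needed.
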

\begin{proof}

Suppose that $\chi|_H$ is irreducible.  We first note that from Clifford theory, $\mathfrak{m}_\ell(H^\pm)=\mathfrak{m}_\ell(K^\pm.2)\leq 2\mathfrak{m}_\ell(K^\pm)$.  Now $\mathfrak{m}_\ell(K^+) \leq (q+1)^2(q^2+1)(q^2+q+1)$ and $\mathfrak{m}_\ell(K^-)\leq (q+1)^2(q^2+1)(q^2-q+1)$ (see, for example, \cite{luebeckwebsite}).

Note that $q(q^4+q^2+1)(q-1)^3/2 >\mathfrak{m}_\ell(H^-)$ for $q\geq 4$.  Moreover, $q(q^4+q^2+1)(q-1)^3/2 >\mathfrak{m}_\ell(H^+)$, except possibly when $q=4$.  However, from \cite{luebeckwebsite}, we can see that if $q=4$, then in fact $\mathfrak{m}_\ell(K^+)\leq 7140$, so $q(q^4+q^2+1)(q-1)^3/2 >\mathfrak{m}_\ell(H^+)$ in this case as well.  Thus we know from \prettyref{thm:lowdimreps} that either $\chi$ lifts to a complex character, or $\chi$ lies in a unipotent block.

Suppose that $\chi$ lies in a unipotent block of $G$.  Then the character degrees listed in situation A(3) of \prettyref{thm:lowdimreps} are larger than our bound for $\mathfrak{m}_\ell(H^-)$ for $q\geq 4$ and are larger than $\mathfrak{m}_\ell(H^+)$ unless $q=4$ and $\ell|(q+1)$.  (Here we have again used the fact that $\mathfrak{m}_\ell(K^+)\leq 7140$.)  Hence, by \prettyref{thm:lowdimreps}, $\chi$ either lifts to an ordinary character or is of the form $\wh{\chi}-1_G$ where $\chi$ is one of the characters discussed in \prettyref{lem:Lemma3forO6} (and therefore do not remain irreducible over $H$), except possibly in the case $H=O_6^+(4)$ and $\ell=5$.

If $q=4$ and $\ell=5$, the bound $D$ in part (A) of \prettyref{thm:lowdimreps} is larger than 14280, so $\wh{\chi}_{35}-\wh{\chi}_5$ is the only additional character we must consider.  However, the degree of $\wh{\chi}_{35}-\wh{\chi}_5$ is $(q^3-1)(q^4-q^3+3q^2/2-q/2+1)=13545$, which is odd, so by Clifford theory, if it restricts irreducibly to $H^+$, then it also restricts irreducibly to the index-$2$ subgroup $K^+$.  But $7140<13545$, a contradiction. Hence $\wh{\chi}_{35}-\wh{\chi}_5$ is reducible when restricted to $H^+$.

We have therefore reduced to the case of complex characters, which by \prettyref{thm:complexresultO6} are all reducible on $H$.

\end{proof} 
\section{Restrictions of Irreducible Characters to Maximal Parabolic Subgroups}\label{sec:resttoparabolics}
The purpose of this section is to prove part (1) of \prettyref{thm:mainresult}.  We momentarily relax the assumption that $G=Sp_6(q)$, and instead consider the more general case $G=Sp_{2n}(q)$ for $n\geq 2$.  Let $\{e_1,...,e_n,f_1,...,f_n\}$ denote a symplectic basis for the natural module $\F_q^{2n}$.  That is, $(e_i,e_j)=(f_i,f_j)=0$ and $(e_i,f_j)=\delta_{ij}$ for $1\leq i,j\leq n$, so that the gram matrix of the symplectic form with isometry group $G$ is $J_{n}:=\left(
                                  \begin{array}{cc}
                                    0 & I_n \\
                                    I_n & 0 \\
                                  \end{array}
                                \right)$.  We will use many results from \cite{TiepGuralnick04} and will keep the notation used there.  In particular, $P_j=\stab_G(\langle e_1,...,e_j\rangle_{\F_q})$ will denote the $j$th maximal parabolic subgroup, $L_j$ its Levi subgroup, $Q_j$ its unipotent radical, and $Z_j=Z(Q_j)$.

If we reorder the basis as $\{e_1,...,e_n, f_{j+1},...,f_n,f_1,...,f_j\}$, then the subgroup $Q_j$ can be written as
\[Q_j=\left\{\left(
               \begin{array}{ccc}
                 I_j & (A^T)J_{n-j} & C\\
                 0 & I_{2n-2j} & A \\
                 0 & 0 & I_j \\
               \end{array}
             \right)
\colon A\in M_{2n-2j,j}(\F_q), C\in M_j(q), C+C^T+(A^T)J_{n-j}A=0\right\}\] and
\[Z_j=\left\{\left(
               \begin{array}{ccc}
                 I_j & 0 & C\\
                 0 & I_{2n-2j} & 0 \\
                 0 & 0 & I_j \\
               \end{array}
             \right)
\colon  C\in M_j(q), C+C^T=0\right\}.\]  In particular, note that in the case $j=n$, $Q_n$ is abelian and $Z_n=Q_n$.  Also, $L_j\cong Sp_{2n-2j}(q)\times GL_j(q)$ is the subgroup
\[L_j=\left\{\left(
               \begin{array}{ccc}
                 A & 0&0\\
                 0 & B & 0 \\
                 0 & 0 & (A^T)^{-1} \\
               \end{array}
             \right)
\colon A\in GL_{j}(q), B\in Sp_{2n-2j}(q)\right\}.\]

 Linear characters $\lambda\in\irr(Z_j)$ are in the form
\[\lambda_Y\colon \left(
               \begin{array}{ccc}
                 I_j & 0 & C\\
                 0 & I_{2n-2j} & 0 \\
                 0 & 0 & I_j \\
               \end{array}
             \right) \mapsto (-1)^{\tr_{\F_q/\F_2}(\tr(YC))}\] for some $Y\in M_j(q)$.  These characters correspond to quadratic forms $q_Y$ on $\F_q^j=\langle f_1,...,f_j\rangle_{\F_q}$ defined by $q_Y(f_i)=Y_{ii}$ with associated bilinear form having Gram matrix $Y+Y^T$.  The $P_j$-orbit of the linear characters $\lambda_Y$ of $Z_j$ is given by the rank $r$ and type $\pm$ of $q_Y$, denoted by $\mathcal{O}_r^\pm$ for $0\leq r\leq j$.  We will sometimes denote the corresponding orbit sums by $\omega_r^\pm$. For $\lambda\in\mathcal{O}_r^\pm$,
\[\stab_{L_j}(\lambda)\cong Sp_{2n-2j}(q)\times \left([q^{r(j-r)}]\colon (GL_{j-r}(q)\times O_r^\pm(q))\right),\] where $[N]$ denotes the elementary abelian group of order $N$.

We begin with a theorem proved in \cite{tiep06}.

\begin{theorem}\label{thm:thm1.6fromtiep}
Let $G=Sp_{2n}(q)$.  Let $Z$ be a long-root subgroup and assume $V$ is a non-trivial irreducible representation of $G$.  Then $Z$ must have non-zero fixed points on $V$.
\end{theorem}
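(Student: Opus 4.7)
By conjugacy of long-root subgroups in $G = Sp_{2n}(q)$, I may take $Z = Z_1$ to be the standard long-root subgroup, elementary abelian of order $q = 2^a$, contained in the maximal parabolic $P_1 = L_1 Q_1$ with $L_1 \cong Sp_{2n-2}(q) \times GL_1(q)$. All nonidentity elements of $Z$ form the single $G$-class of long-root transvections, so if $\alpha$ denotes their common Brauer character value on $V$, orthogonality of characters yields
\[
  \dim V^Z \;=\; \frac{\chi(1) + (q-1)\alpha}{q}, \qquad \dim V_\lambda \;=\; \frac{\chi(1) - \alpha}{q}, \quad \lambda \in \widehat{Z} \setminus \{1_Z\},
\]
and the theorem is equivalent to the inequality $\alpha > -\chi(1)/(q-1)$.

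My approach is to prove this inequality via Deligne--Lusztig theory combined with induction on $n$ and Harish-Chandra reciprocity. By Jordan decomposition of Brauer characters and the Bonnaf\'e--Rouquier Morita equivalence invoked in \prettyref{lem:moritaequiv} and \prettyref{prop:jordandecompforibrs}, every $\chi \in \ibr_\ell(G)$ is parameterized by a semisimple $\ell'$-class $(s)$ in $G^\ast$ together with a unipotent Brauer character of $C_{G^\ast}(s)$, which for $s \neq 1$ is a proper product of smaller classical and linear groups; the Deligne--Lusztig character formula expresses $\chi(t)$ on a long-root element $t$ in terms of Green functions and the unipotent constituent. For $V$ in a non-cuspidal Harish-Chandra series $R_L^G \psi$ with $L \cong Sp_{2n-2}(q) \times GL_1(q)^k$ (or similar), the long-root subgroup $Z$ of $G$ sits inside the classical factor $Sp_{2n-2}(q) \leq L$ as a long-root subgroup, and Harish-Chandra reciprocity lets one transfer the bound from $\psi$ to $V$. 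One is thus reduced to the Harish-Chandra cuspidal irreducibles and the unipotent Brauer characters of $G$ itself.

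For the Weil cuspidals $\tau_n^i$ and $\zeta_n^i$, $\alpha$ is computed directly from \eqref{formulafortau} and \eqref{formulaforzeta}: a long-root transvection has a hyperplane of fixed points on the natural module of $SL_{2n}(q)$ (respectively $SU_{2n}(q)$), so the sums collapse to explicit positive rational numbers (e.g.\ $\tau_n^i(t) = (q^{2n-1}-1)/(q-1)$ for $i \neq 0$), trivially exceeding $-\chi(1)/(q-1)$. For the unipotent characters of $G$ one uses the standard tables (for $n=3$ these can be read off from \prettyref{app:BrauerCharTable}). The main technical obstacle is the uniform verification on those \emph{modular} cuspidal characters of $G$ that do not lift to ordinary characters: bounding $\alpha$ without an explicit character table requires either a cohomological / geometric analysis of the fixed-point space of a long-root transvection on the relevant Deligne--Lusztig variety, or a fine Clifford-theoretic analysis along $Z \triangleleft Q_1 \triangleleft P_1$ in which the constraints on the possible $P_1$-structure of $V$, combined with the Landazuri--Seitz bound on $\chi(1)$, rule out the extremal value $\alpha = -\chi(1)/(q-1)$.
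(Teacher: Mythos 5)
The paper offers no proof of this statement at all: it simply quotes it as Theorem 1.6 of \cite{tiep06}, so the real comparison is with your outline on its own merits. Several pieces of your plan are sound. The reduction to the inequality $\alpha > -\chi(1)/(q-1)$ is correct (note $\alpha \ge -\chi(1)/(q-1)$ holds automatically since $\dim V^Z \ge 0$, and equality is precisely the failure case; by Clifford theory for $Z \lhd P_1$, failure forces $\chi|_Z$ to be $\tfrac{\chi(1)}{q-1}$ times the sum of all nontrivial characters of $Z$, since $L_1$ is transitive on them). The Harish-Chandra reduction also works, and can be made cleaner and more general than you state: the highest root of $C_n$ is long and has every simple root in its support, so the corresponding long-root subgroup lies in the unipotent radical $Q$ of \emph{every} proper parabolic $P = LQ$; hence for non-cuspidal $V$ one gets $0 \ne V^Q \subseteq V^Z$ directly, with no need to locate $Z$ inside a symplectic Levi factor (which in fact fails when $L$ has no $Sp$ factor, e.g.\ for principal series). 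One computational slip: $\zeta_n^i(t) = -(q^{2n-1}+1)/(q+1)$ is negative, not positive; the required inequality still holds for $n \ge 2$, but your claim that the sums collapse to positive numbers is wrong for the unitary Weil characters.

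The genuine gap is the one you name yourself and then do not close: the cuspidal irreducible Brauer characters, in particular those that do not lift to ordinary characters. For these you offer only a menu of possible strategies (a geometric analysis of fixed points on a Deligne--Lusztig variety, or a Clifford-theoretic analysis along $Z \lhd Q_1 \lhd P_1$) without executing either, and neither is routine. In particular, the Bonnaf\'e--Rouquier Morita equivalence and the Jordan decomposition of $\ibr_\ell(G)$ control degrees and decomposition numbers, but they do not by themselves give character values on unipotent elements such as transvections; extracting $\alpha$ that way requires two-variable Green functions whose signs are not controlled at the level of generality of this theorem. As written, the proposal is a plausible reduction with the hardest case left open, not a proof. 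The route that actually succeeds (and is closer to the cited source) is purely local: assume $V^Z=0$, so $\chi(z)=-\chi(1)/(q-1)$ for $z\ne 1$ in $Z$, and then analyze $V|_{Q_1}$ using the special-group structure of $Q_1$ with center $Z$ and the action of $L_1 \cong GL_1(q)\times Sp_{2n-2}(q)$ on $\Irr(Q_1)$ to reach a contradiction; no Deligne--Lusztig input is needed.
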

\begin{proof}
This is \cite[Theorem 1.6]{tiep06} in the case that $G$ is type $C_n$.
\end{proof}

\prettyref{thm:thm1.6fromtiep} shows that there are no examples of irreducible representations of $G$ which are irreducible when restricted to $P_1$.
\begin{corollary}\label{cor:resultforP1}
Let $V$ be an irreducible representation of $G=Sp_{2n}(q)$, $q$ even, which is irreducible on $H=P_1=\stab_G(\langle e_1\rangle_{\F_q})$.  Then $V$ is the trivial representation.
\end{corollary}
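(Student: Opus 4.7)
The plan is to apply \prettyref{thm:thm1.6fromtiep} to the subgroup $Z_1:=Z(Q_1)$, which sits inside $P_1$. The two key observations are that $Z_1$ is a long-root subgroup of $G$ and that $Z_1$ is normal in $P_1$.

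First, I would verify from the explicit matrix descriptions of $Q_1$ and $Z_1$ given just before the statement (specialized to $j=1$, so that the relation $C+C^T=0$ on a $1\times 1$ matrix $C$ is vacuous in characteristic $2$) that $Z_1\cong(\F_q,+)$ consists of the symplectic transvections with centre $\langle e_1\rangle_{\F_q}$, and these are precisely the long-root elements of $Sp_{2n}(q)$. Being the centre of $Q_1$, the subgroup $Z_1$ is characteristic in $Q_1$; since $Q_1\trianglelefteq P_1$, it follows that $Z_1\trianglelefteq P_1$.

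Suppose for contradiction that $V$ is non-trivial. Then by \prettyref{thm:thm1.6fromtiep} the fixed-point subspace $V^{Z_1}$ is non-zero. Since $Z_1\trianglelefteq P_1$, the subspace $V^{Z_1}$ is $P_1$-stable, so the assumed irreducibility of $V|_{P_1}$ forces $V^{Z_1}=V$; equivalently, $Z_1$ lies in the kernel $K$ of the underlying representation $\rho\colon G\to GL(V)$. Because $K$ is normal in $G$ and every long-root subgroup of $G$ is $G$-conjugate to $Z_1$, the kernel $K$ contains every long-root subgroup. By the classical fact that $Sp_{2n}(q)$ is generated by its symplectic transvections, this forces $K=G$, contradicting the assumption that $V$ is non-trivial. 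Hence $V$ is the trivial representation.

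The argument is essentially a formal packaging of \prettyref{thm:thm1.6fromtiep}, so I do not anticipate any substantive obstacle; the only points requiring care are the routine identification of $Z_1$ as a long-root subgroup from the matrix description and the invocation of the standard fact that symplectic transvections generate $Sp_{2n}(q)$.
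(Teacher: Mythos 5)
Your proof is correct and follows essentially the same route as the paper: apply Theorem \ref{thm:thm1.6fromtiep} to the long-root subgroup $Z_1\trianglelefteq P_1$, use irreducibility of $V|_{P_1}$ to force $Z_1$ into the kernel, and conclude. The only cosmetic differences are that the paper phrases the middle step via Clifford theory (the restriction to $Z_1$ is a multiple of a single $P_1$-orbit sum) and closes by citing simplicity of $G$ rather than generation by transvections; your variant is, if anything, slightly more robust.
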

\begin{proof}
Suppose that $V$ is non-trivial and let $\chi\in\ibr_\ell(G)$ denote the Brauer character afforded by $V$. By Clifford theory, $\chi|_{Z_1}= e\sum_{\lambda\in\mathcal{O}}\lambda$ for some $P_1$-orbit $\mathcal{O}$ on $\irr(Z_1)$ and positive integer $e$.   But in this case, $Z_1$ is a long-root subgroup, so $Z_1$ has non-zero fixed points on $V$ by \prettyref{thm:thm1.6fromtiep}.  This means that $\mathcal{O}=\{1_{Z_1}\}$, so $Z_1\leq \ker\chi$, a contradiction since $G$ is simple.

\end{proof}

 We can view $Sp_4(q)$ as a subgroup of $G$ under the identification $Sp_4(q)\simeq \stab_{G}(e_3,...,e_n,f_3,...,f_n)$.  To distinguish between subgroups of $Sp_4(q)$ and $Sp_{2n}(q)$, we will write $P_j^{(n)}=\stab_{Sp_{2n}(q)}(\langle e_1,...,e_j\rangle)$ for the $j$th maximal parabolic subgroup of $Sp_{2n}(q)$, $P_j^{(2)}$ for the $j$th maximal parabolic subgroup of $Sp_{4}(q)$, and similarly for the subgroups $Z_j, Q_j,$ and $L_j$.
 Note that $P_2^{(2)}\leq P_n^{(n)}$ and $Z_2^{(2)}\leq Z_n^{(n)}$.

 The following theorem will often be useful when viewing $Sp_4(q)$ as a subgroup of $G$ in this manner.

 \begin{theorem}\label{thm:alpha2irredP2}
  Let $q$ be even and let $V$ be an absolutely irreducible $Sp_4(q)$-module of dimension larger than $1$ in characteristic $\ell\neq 2$.  Then $V$ is irreducible on $P_2=\stab_G(\langle e_1,e_2\rangle_{\F_q})$ if and only if $V$ affords the $\ell$-Brauer character $\wh{\alpha}_2$.
 \end{theorem}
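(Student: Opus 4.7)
The plan is to apply Clifford theory to the abelian normal subgroup $Z_2 \lhd P_2$: if $V|_{P_2}$ is absolutely irreducible, then $V|_{Z_2}$ is a multiple $e \cdot \omega_{\mathcal{O}}$ of the orbit sum over a single $P_2$-orbit $\mathcal{O}\in\{\mathcal{O}_0,\mathcal{O}_1,\mathcal{O}_2^+,\mathcal{O}_2^-\}$, and $V|_{P_2}$ is induced from an extension of some $\lambda\in\mathcal{O}$ to its inertia subgroup in $P_2$. The trivial orbit $\mathcal{O}_0$ is ruled out immediately, in analogy with the proof of \prettyref{cor:resultforP1}: $Z_2\subseteq\ker V$ combined with the simplicity of $G$ (so that the normal closure of $Z_2$ in $G$ is all of $G$) would force $V$ trivial, contradicting $\dim V>1$.

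For the remaining orbits, I would use the Clifford identity $V(1)=e|\mathcal{O}|$ with $e\leq\mathfrak{m}_\ell(\stab_{L_2}(\lambda))$, where the stabilizer is $[q]{:}C_{q-1}$ for $\mathcal{O}_1$ and dihedral of order $2(q\mp 1)$ for $\mathcal{O}_2^\pm$; these bounds, combined with the Landazuri--Seitz lower bound $\mathfrak{d}_\ell(Sp_4(q))=q(q-1)^2/2$, confine $V(1)$ to a narrow range. The low-dimensional classification of cross-characteristic Brauer characters of $Sp_4(q)$ (the $Sp_4$-analog of \prettyref{thm:lowdimreps}, essentially from Guralnick--Tiep) then forces $V$ to be one of the Weil characters $\wh{\alpha}_2,\wh{\beta}_2,\wh{\rho}_2^{1,2},\wh{\tau}_2^i,\wh{\zeta}_2^i$. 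A finite divisibility check $|\mathcal{O}|\mid V(1)$ against these Weil degrees singles out the unique compatible triple $V=\wh{\alpha}_2$, $\mathcal{O}=\mathcal{O}_2^-$, $e=1$.

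For the ``if'' direction I would verify $\wh{\alpha}_2|_{Z_2}=\omega_2^-$ with multiplicity one. Since $\alpha_2(1)=q(q-1)^2/2=|\mathcal{O}_2^-|$, it suffices to show that $\alpha_2|_{Z_2}$ is supported on a single $P_2$-orbit; this can be established either by computing character values on $Z_2$ via the Weil character formulas \eqref{formulafortau} and \eqref{formulaforzeta} together with the known decomposition of the unitary Weil character of $GU_4(q)$ upon restriction to $Sp_4(q)$, or by direct comparison against L\"ubeck's character tables \cite{luebeckwebsite}. Clifford theory then yields that $\alpha_2|_{P_2}$ is irreducible, and reduction modulo $\ell$ preserves the multiplicity-free orbit-sum structure, giving the irreducibility of $\wh{\alpha}_2|_{P_2}$. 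The main obstacle is the explicit identification $\wh{\alpha}_2|_{Z_2}=\omega_2^-$: this is the single nontrivial computation on which the argument pivots, with both directions reducing to straightforward bookkeeping once it is in place.
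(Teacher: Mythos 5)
Your proposal is correct in outline and shares the paper's Clifford-theoretic skeleton, but the ``only if'' direction runs along a genuinely different track. The paper does not use degree bounds at all to force $V$ to be a Weil module: once Clifford theory gives $V|_{Z_2}=e\sum_{\lambda\in\mathcal{O}}\lambda$ for a single nontrivial orbit, this is exactly the property $\mathcal{W}_2^\pm$ of \cite{TiepGuralnick04}, and \cite[Theorem 1.2]{TiepGuralnick04} immediately identifies $V$ as a Weil module. The paper then eliminates $\wh{\beta}_2,\wh{\rho}_2^1,\wh{\rho}_2^2,\wh{\tau}_2^i,\wh{\zeta}_2^i$ by explicitly computing their restrictions to $Z_2$ (recorded in \prettyref{cor:beta2onZ2}) and observing that each one hits more than one $P_2$-orbit. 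You instead bound $\dim V=e|\mathcal{O}|\leq q(q^2-1)$ via the inertia stabilizers, appeal to a low-dimensional gap theorem to land on the Weil list, and finish with the divisibility test $|\mathcal{O}|\mid V(1)$; I checked that this arithmetic does single out $\wh{\alpha}_2$ with $\mathcal{O}=\mathcal{O}_2^-$, $e=1$ against all Weil degrees and their ``$-1$'' variants. Your route buys independence from the explicit orbit decompositions of \prettyref{cor:beta2onZ2}, at the cost of needing two external inputs the paper avoids here: the Landazuri--Seitz bound and, more seriously, a Brauer-character gap theorem for $Sp_4(q)$ covering all degrees up to $q(q^2-1)=q^3-q$. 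That last point is the one soft spot: the standard gap bounds for $Sp_4(q)$, $q$ even, are of the same order of magnitude as $q^3-q$, and for small $q$ (notably $q=4$, where your Clifford bound allows degrees up to $60$) you must verify that the precise constant in the theorem you cite actually exceeds $q(q^2-1)$, or else dispose of the window $(\mathfrak{D},\,q^3-q]$ by hand. If that margin fails, the clean repair is exactly the paper's move: replace the degree argument by the $\mathcal{W}_2^\pm$ characterization, which you are already implicitly close to since you have the single-orbit restriction in hand. Your ``if'' direction coincides with the paper's: both reduce to the identity $\alpha_2|_{Z_2}=\omega_2^-$ (the paper gets it from the class fusion of $Z_2\setminus\{1\}$ into $A_{31},A_2,A_{32}$ and the values in \cite[Proposition 4.1]{TiepGuralnick04} and \cite{enomoto72}), after which multiplicity-freeness and $|\mathcal{O}_2^-|=\alpha_2(1)$ force irreducibility on $P_2$, unchanged under reduction mod $\ell$ because $Z_2$ is a $2$-group.
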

 \begin{proof}
Let $Z:=Z_2^{(2)}$ be the unipotent radical of $P_2$.  First we claim that $\wh{\alpha}_2$ is indeed irreducible on $P_2$.  Note that $\wh{\alpha}_2|_Z=\alpha_2|_Z$ since $Z$ consists of $2$-elements.  Now, $\alpha_2(1)=|\mathcal{O}_2^-|$, and by Clifford theory it suffices to show that $\alpha_2|_{Z}=\sum_{\lambda\in\mathcal{O}_2^-}\lambda = \omega_2^-$.  From the proof of \cite[Proposition 4.1]{TiepGuralnick04}, it follows that nontrivial elements of $Z$ belong to the classes $A_{31}, A_2, A_{32}$ of $Sp_4(q)$.  Inspecting the values of $\alpha_2$ and $\omega_2^-$ on these classes, which are found in the proof of \cite[Proposition 4.1]{TiepGuralnick04} and \cite{enomoto72}, respectively, we see that $\alpha_2|_Z=\omega_2^-$, and $\wh{\alpha}_2$ must be irreducible when restricted to $P_2$.

Conversely, suppose that $\chi$ is the Brauer character afforded by $V$, and $\chi|_{P_2}=\varphi\in\ibr_\ell(P_2)$.  By Clifford theory, $\varphi|_{Z}=e\sum_{\lambda\in\mathcal{O}}\lambda$ for some nontrivial $P_2$-orbit $\mathcal{O}$ of $\irr(Z)$.  It follows that $\varphi$ satisfies condition $\mathcal{W}_2^\pm$ of \cite{TiepGuralnick04}, so $\chi$ is a Weil character of $Sp_4(q)$ by \cite[Theorem 1.2]{TiepGuralnick04}.

Now, following the notation of the proof of \cite[Proposition 4.1]{TiepGuralnick04}, we have  \[\zeta_2|_{Z}=1_{Z}+(q+1)\omega_1+(2q+2)\omega_2^-.\]

Since $Z$ consists of $2$-elements, \cite[Lemma 3.8]{TiepGuralnick04} implies that $\zeta_2^i|_Z=\alpha_2|_Z+\beta_2|_Z-1_Z$, so by the definition of $\zeta_2$ (see \cite[Section 3]{TiepGuralnick04}),
 \[\zeta_2|_{Z}=(q+1)\alpha_2|_{Z}+(q+1)\beta_2|_{Z}-q\cdot 1_{Z}.\]  
 It follows that
 $\wh{\beta}_2|_{Z}=1_{Z}+\omega_1+\omega_2^-.$

The values of $\omega_1$ and $ \omega_2^+$ on $Z$ are obtained in \cite[Proposition 4.1]{TiepGuralnick04}, and the values of $\rho_2^1$, and $\rho_2^2$ are obtained in \cite{enomoto72}. Inspection of these values on the classes $A_{31}, A_2, A_{32}$ yields that $\rho_2^1|_Z=\omega_2^++q\cdot 1_Z$ and $\rho_2^2|_Z= (q+1)\cdot 1_Z+\omega_1+\omega_2^+$.  Moreover, \cite[Lemma 3.8]{TiepGuralnick04} implies that $\tau_2^i|_Z=\rho_2^1|_Z+\rho_2^2|_Z+1$.

Hence, we see that if $\chi$ is any Weil character aside from $\wh{\alpha}_2$, then $\chi|_Z$ contains as constituents multiple $P_2$-orbits of characters of $Z$, a contradiction.
 \end{proof}
 The following corollary follows directly from the proof of \prettyref{thm:alpha2irredP2}.
 \begin{corollary}\label{cor:beta2onZ2}
 Let $Z_2$ be the unipotent radical of $P_2=\stab_{Sp_4(q)}(\langle e_1,e_2\rangle_{\F_q})$.  Then
 \[\alpha_2|_{Z_2} = \sum_{\lambda\in\mathcal{O}_2^-}\lambda, \qquad \beta_2|_{Z_2}=\sum_{\lambda\in\mathcal{O}_2^-}\lambda+\sum_{\lambda\in\mathcal{O}_1}\lambda+1_{Z_2},\qquad\zeta_2^i|_{Z_2}=2\sum_{\lambda\in\mathcal{O}_2^-}\lambda+\sum_{\lambda\in\mathcal{O}_1}\lambda,\]
 \[\rho_2^1|_{Z_2}=q\cdot1_{Z_2}+\sum_{\lambda\in\mathcal{O}_2^+}\lambda,\qquad \rho_2^2|_{Z_2}=(q+1)\cdot 1_{Z_2}+\sum_{\lambda\in\mathcal{O}_1}\lambda+\sum_{\lambda\in\mathcal{O}_2^+}\lambda,\] and \[\tau_2^i|_{Z_2}= (2q+2)\cdot 1_{Z_2}+\sum_{\lambda\in\mathcal{O}_1}\lambda+2\sum_{\lambda\in\mathcal{O}_2^+}\lambda.\]
 \end{corollary}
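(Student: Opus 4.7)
My plan is to observe that essentially every assertion of the corollary has already been extracted inside the proof of \prettyref{thm:alpha2irredP2}, and that the two remaining identities follow by a single substitution. Specifically, the proof of \prettyref{thm:alpha2irredP2} establishes, by comparing the values of the relevant Weil characters on the only non-trivial $Sp_4(q)$-conjugacy classes meeting $Z_2$ (namely $A_{31}, A_2, A_{32}$, as identified in the proof of \cite[Proposition~4.1]{TiepGuralnick04}) with the values of the orbit sums $\omega_1$ and $\omega_2^{\pm}$ (tabulated in that same proposition together with the values of $\rho_2^1, \rho_2^2$ from \cite{enomoto72}), the identities
\[\alpha_2|_{Z_2}=\omega_2^-,\quad \beta_2|_{Z_2}=1_{Z_2}+\omega_1+\omega_2^-,\quad \rho_2^1|_{Z_2}=q\cdot 1_{Z_2}+\omega_2^+,\quad \rho_2^2|_{Z_2}=(q+1)\cdot 1_{Z_2}+\omega_1+\omega_2^+.\]
(Here I use that $Z_2$ consists of $2$-elements, so that the restriction of each ordinary character to $Z_2$ coincides with the restriction of the corresponding Brauer character for any $\ell\neq 2$; in particular $\beta_2|_{Z_2}=\widehat{\beta}_2|_{Z_2}$.)

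For the remaining two formulas involving $\zeta_2^i$ and $\tau_2^i$, the key input is \cite[Lemma~3.8]{TiepGuralnick04}, which in the current setting (restriction to the $p$-subgroup $Z_2$, with $p=2$) specializes to the identities
\[\zeta_2^i|_{Z_2}=\alpha_2|_{Z_2}+\beta_2|_{Z_2}-1_{Z_2}\qquad\text{and}\qquad \tau_2^i|_{Z_2}=\rho_2^1|_{Z_2}+\rho_2^2|_{Z_2}+1_{Z_2},\]
both of which are already cited in the proof of \prettyref{thm:alpha2irredP2} and are independent of the parameter $i$. Substituting the four formulas above and collecting like terms gives
\[\zeta_2^i|_{Z_2}=2\omega_2^-+\omega_1\qquad\text{and}\qquad \tau_2^i|_{Z_2}=(2q+2)\cdot 1_{Z_2}+\omega_1+2\omega_2^+,\]
as asserted.

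There is essentially no obstacle here; the corollary is really just a repackaging of the calculations already carried out inside \prettyref{thm:alpha2irredP2}, made explicit for later use. The only thing worth checking is consistency of degrees: both sides of each asserted identity have the degree predicted by \prettyref{tab:TiepGuralnickTable1}, which provides a quick sanity check that no orbit has been missed when assembling the sums $\omega_1$, $\omega_2^+$, $\omega_2^-$. Once that is confirmed the proof amounts to writing the six identities in a single display.
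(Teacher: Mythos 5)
Your proposal is correct and matches the paper's approach exactly: the paper states that the corollary ``follows directly from the proof of \prettyref{thm:alpha2irredP2},'' and your write-up simply makes explicit the same ingredients used there (the values of $\omega_1,\omega_2^\pm$ and of the Weil characters on the classes $A_{31},A_2,A_{32}$, together with \cite[Lemma 3.8]{TiepGuralnick04} to deduce the $\zeta_2^i$ and $\tau_2^i$ formulas by substitution). No gaps.
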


 \begin{theorem}\label{thm:alphanirredonPn}
 Let $G=Sp_{2n}(q)$ with $q$ even and $n\geq 2$, and let $V$ be an absolutely irreducible $G$-module  in characteristic $\ell\neq 2$ affording the $\ell$-Brauer character $\wh{\alpha}_n$.  Then $V$ is irreducible on $P_n=\stab_G(\langle e_1,...,e_n\rangle_{\F_q})$.
 \end{theorem}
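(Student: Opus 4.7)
The plan is to follow the same Clifford-theoretic strategy as in the proof of \prettyref{thm:alpha2irredP2}, now with $Z_n \lhd P_n$ in place of $Z_2 \lhd P_2$. Since $Z_n$ is an elementary abelian $2$-group and $\ell \neq 2$, the Brauer restriction equals the ordinary restriction: $\widehat\alpha_n|_{Z_n} = \alpha_n|_{Z_n}$. The core claim is that this restriction equals the single $P_n$-orbit sum $\omega_2^-$ of linear characters of $Z_n$ corresponding to rank-$2$ type-$-$ quadratic forms on $\F_q^n$, each with multiplicity one. Once this is established, Clifford theory (exactly as in the proof of \prettyref{thm:alpha2irredP2}) concludes: any nonzero $P_n$-submodule $W \leq V$ has $W|_{Z_n}$ equal to a $P_n$-invariant subset of the orbit $\mathcal{O}_2^-$, so by transitivity $W|_{Z_n}$ is all of $\omega_2^-$ and hence $W = V$.

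The candidate orbit $\mathcal{O}_2^-$ is singled out by a degree count: the stabilizer formula stated at the beginning of \prettyref{sec:resttoparabolics} specializes for $\lambda \in \mathcal{O}_2^-$ to $\stab_{L_n}(\lambda) \cong [q^{2(n-2)}]:(GL_{n-2}(q) \times O_2^-(q))$, and direct simplification gives $|\mathcal{O}_2^-| = q(q^n-1)(q^{n-1}-1)/(2(q+1)) = \alpha_n(1)$. To verify $\alpha_n|_{Z_n} = \omega_2^-$ as class functions, I would compare values on each class of $Z_n$-elements. These are unipotent elements of $Sp_{2n}(q)$ of Jordan type $2^r 1^{2n-2r}$ with $r = \rank C$, further indexed by a sign for $r \geq 2$. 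The value $\alpha_n(g)$ can be extracted from the Weil-character theory of \cite{TiepGuralnick04}: the identity $\alpha_n + \beta_n = \zeta_n^0$ gives $\zeta_n^0(g) = ((-q)^{2n-r} + q)/(q+1)$ on $Z_n$, and a second relation separates $\alpha_n$ from $\beta_n$ on $2$-elements (of the same flavor used via the reference to the proof of \cite[Proposition 4.1]{TiepGuralnick04} in the $n = 2$ case). On the other side, $\omega_2^-(g) = \sum_{\lambda \in \mathcal{O}_2^-} \lambda(g)$ is evaluated by a Gauss-sum computation on the abelian group $Z_n$.

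The main technical obstacle is this value identification for general $n$. The $n = 2$ argument in \prettyref{thm:alpha2irredP2} only involves three nontrivial classes of $Z_2$-elements in $Sp_4(q)$; for larger $n$ the number of Jordan-type/sign combinations grows, and the separation of $\alpha_n$ from $\beta_n$ must be organized uniformly across ranks. A likely cleaner route is inductive: using the embedding $Sp_4(q) \times Sp_{2n-4}(q) \leq Sp_{2n}(q)$ on the last two pairs of basis vectors (so that $Z_2^{(2)} \leq Z_n$) together with the factorization $\zeta_n|_{Sp_4(q) \times Sp_{2n-4}(q)} = \zeta_2 \otimes \zeta_{n-2}$ of the full unitary Weil character and a corresponding branching of $\alpha_n$, the required identification reduces to the base case $n = 2$ already handled in \prettyref{thm:alpha2irredP2}.
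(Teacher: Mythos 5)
Your overall strategy --- reduce to the $n=2$ case of \prettyref{thm:alpha2irredP2}, single out $\mathcal{O}_2^-$ by the count $|\mathcal{O}_2^-|=\alpha_n(1)$, and finish with Clifford theory --- is the same as the paper's, and your orbit-size computation is correct. But there is a genuine gap: the ``core claim'' $\wh{\alpha}_n|_{Z_n}=\omega_2^-$ is never actually established. Your primary plan (comparing values of $\alpha_n$ and $\omega_2^-$ on every class of $Z_n$-elements via Gauss sums and a rank-by-rank separation of $\alpha_n$ from $\beta_n$) is exactly the part you flag as the main obstacle and do not carry out; and your fallback inductive route hides the same difficulty in the phrase ``a corresponding branching of $\alpha_n$.'' The tensor factorization over $Sp_4(q)\times Sp_{2n-4}(q)$ holds for the \emph{total} unitary Weil character of degree $q^{2n}$, but extracting from it which irreducible pieces of $\wh{\alpha}_n|_{Sp_4(q)}$ occur is precisely the nontrivial input, and it is asserted rather than proved.

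The paper closes this gap with a citation and a weaker intermediate claim. By \cite[Proposition 7.2]{TiepGuralnick04}, $\wh{\alpha}_n|_{Sp_{2n-2}(q)}$ contains $\wh{\alpha}_{n-1}$, so iterating, $\wh{\alpha}_n|_{Sp_4(q)}$ contains $\wh{\alpha}_2$; combined with the compatibility $\lambda_Y|_{Z_2^{(2)}}=\lambda_{Y_1}$ and $q_Y|_{\langle f_1,f_2\rangle_{\F_q}}=q_{Y_1}$, this shows only that $V|_{Z_n}$ has \emph{some} constituent $\lambda_Y$ with $q_Y$ of rank $\geq 2$. That is all that is needed: Clifford theory applied to an irreducible $P_n$-constituent containing such a $\lambda_Y$ forces its dimension to be a multiple of $|\mathcal{O}_r^{\pm}|$ for some $r\geq 2$, and since $|\mathcal{O}_2^-|=\alpha_n(1)$ while every other orbit with $r\geq 2$ is strictly larger, that constituent is all of $V$. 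In other words, the full identification $\wh{\alpha}_n|_{Z_n}=\omega_2^-$ comes out as a \emph{consequence} of the argument rather than serving as its prerequisite, so no character-value computation beyond the $n=2$ case is required. If you replace your ``corresponding branching of $\alpha_n$'' with the cited branching rule and reorganize the logic in this order, your proof becomes complete and coincides with the paper's.
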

 \begin{proof}
Note that $\ibr_\ell(Z_n)=\irr(Z_n)$ since $Z_n$ is made up entirely of $2$-elements.  Let $\lambda_Y\in\irr(Z_n)$ be labeled by $Y=\left(
                                                \begin{array}{cc}
                                                  Y_1 &  Y_2\\
                                                  Y_3 & Y_4 \\
                                                \end{array}
                                              \right)\in M_n(q)$ with $Y_1\in M_2(q), Y_4\in M_{n-2}(q)$.  Identifying a symmetric matrix $X\in M_2(q)$ with both $\left(
                                                \begin{array}{cc}
                                                  I_2 &  X\\
                                                  0 & I_2\\
                                                \end{array}
                                              \right)\in Z_2^{(2)}$ and $\left(
                                                \begin{array}{cc}
                                                  I_n &  X_1\\
                                                  0 & I_n\\
                                                \end{array}
                                              \right)\in Z_n^{(n)}$, where $X_1:=\left(
                                                \begin{array}{cc}
                                                  X &  0\\
                                                  0 & 0\\
                                                \end{array}
                                              \right)\in M_n(q),$ we see
 \[\lambda_Y(X)=(-1)^{\mathrm{Tr}_{\F_q/\F_2}(\mathrm{Tr}(X_1Y))}=(-1)^{\mathrm{Tr}_{\F_q/\F_2}(\mathrm{Tr}(XY_1))}=\lambda_{Y_1}(X).\] Thus $\lambda_Y|_{Z_2^{(2)}}=\lambda_{Y_1}$.  Also, it is clear from the definition that $q_Y|_{\langle f_1,f_2\rangle_{\F_q}}=q_{Y_1}$.

 From \cite[Proposition 7.2]{TiepGuralnick04}, $\wh{\alpha}_n|_{Sp_{2n-2}(q)}$ contains $\wh{\alpha}_{n-1}$ as a constituent, and continuing inductively, we see $\wh{\alpha}_n|_{Sp_{4}(q)}$ contains $\wh{\alpha}_2$ as a constituent.  Now, by \prettyref{thm:alpha2irredP2}, $\wh{\alpha}_2$ is irreducible when restricted to $P_2^{(2)}$, and $\wh{\alpha_2}|_{Z_2^{(2)}}$ is the sum of the characters in the orbit $\mathcal{O}_2^-$.

 Since $\wh{\alpha}_2|_{Z_2^{(2)}}$ is a constituent of $\wh{\alpha}_n|_{Z_{2}^{(2)}}$, it follows that  $\wh{\alpha}_n|_{Z_{n}^{(n)}}$ must contain some $\lambda_Y$ such that $q_{Y_1}$ is rank-$2$.  Since $|\mathcal{O}_2^-|=\alpha_n(1)$ and $|\mathcal{O}_r^\pm|>\alpha_n(1)$ for the other orbits with $r\geq 2$, we know $\wh{\alpha}_n|_{Z_n^{(n)}}=\sum_{\lambda\in\mathcal{O}_2^-}\lambda$.  Therefore $\wh{\alpha}_n|_{P_n^{(n)}}$ must be irreducible.

 \end{proof}

 It will now be convenient to reorder the basis of $G=Sp_{2n}(q)$ as $\{e_1, e_2,...,e_n,f_3,f_4,...,f_n,f_1,f_2\}$.  Under this basis, the embedding of $Sp_4(q)$ into $G$ is given by
 \[Sp_4(q)\ni\left(
     \begin{array}{cc}
       A & B \\
       C & D \\
     \end{array}
   \right)\mapsto \left(
                    \begin{array}{ccc}
                      A & 0 & B \\
                      0 & I_{2n-4} & 0 \\
                      C & 0 & D \\
                    \end{array}
                  \right)\in Sp_{2n}(q)
 \] where $A,B,C,D$ are each $2\times 2$ matrices.

 Note that $P_2^{(2)}\leq P_2^{(n)}$ and, moreover, $Z_2^{(2)} = Z_2^{(n)}$.  We will therefore simply write $Z_2$ for this group.

 \begin{theorem}\label{thm:generalresultforP2}
Let $G=Sp_{2n}(q)$ with $q$ even and $n\geq 2$, and let $V$ be an absolutely irreducible $G$-module with dimension larger than $1$ in characteristic $\ell\neq 2$.  Then $V$ is absolutely irreducible on $P_2^{(n)}$ if and only if $n=2$ and $V$ is the module affording the $\ell$-Brauer character $\wh{\alpha}_2$.
 \end{theorem}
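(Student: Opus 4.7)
The ``if'' direction for $n=2$ is \prettyref{thm:alpha2irredP2}. For the converse, the plan is to apply Clifford theory to the abelian normal subgroup $Z_2 := Z_2^{(n)} = Z_2^{(2)} \triangleleft P_2^{(n)}$ in order to reduce to the statement that $V|_{Sp_4(q)}$ is $\wh{\alpha}_2$-isotypic, and then to separate the (immediate) case $n=2$ from the case $n \geq 3$, where a contradiction must be extracted.

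First I would observe that the $P_2^{(n)}$-orbits on $\irr(Z_2)$ agree with the $P_2^{(2)}$-orbits---both are parameterized by the rank and type of the associated quadratic form on $\F_q^2$---since the $Sp_{2n-4}(q)$-factor of the Levi $L_2^{(n)} = Sp_{2n-4}(q) \times GL_2(q)$ centralizes $Z_2$. Assuming $V|_{P_2^{(n)}}$ is irreducible and non-trivial, Clifford theory together with simplicity of $G$ forces $V|_{Z_2} = e\omega$ for a single non-trivial orbit sum $\omega$. Decomposing $V|_{Sp_4(q)} = \bigoplus_i m_i \psi_i$ into distinct $\ell$-Brauer irreducibles, the linear independence of orbit sums of $P_2^{(2)}$ on $\irr(Z_2)$ requires each $\psi_i|_{Z_2}$ to be a non-negative integer multiple of $\omega$. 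Applying \cite[Theorem 1.2]{TiepGuralnick04} as in the proof of \prettyref{thm:alpha2irredP2}, each $\psi_i$ must be a Weil character of $Sp_4(q)$; inspection of \prettyref{cor:beta2onZ2} then shows that only $\wh{\alpha}_2$ has its $Z_2$-restriction supported on a single orbit, namely $\mathcal{O}_2^-$. Hence $V|_{Sp_4(q)} = m\wh{\alpha}_2$ for some positive integer $m$.

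For $n=2$, irreducibility of $V$ forces $m = 1$ and thus $V \cong \wh{\alpha}_2$. For $n \geq 3$---the main obstacle---I would derive a contradiction as follows. A Clifford-theoretic upper bound $\dim V = e\cdot |\mathcal{O}_2^-| \leq |\mathcal{O}_2^-|\cdot \mathfrak{m}_\ell(\stab_{P_2^{(n)}}(\lambda)/Z_2)$ keeps $\dim V$ below the high-dimensional thresholds $D$ appearing in parts A(4) and B(4) of \prettyref{thm:lowdimreps}, so $V$ must lie on the explicit finite list of candidates in parts A(1)--A(3) and B(1)--B(3) of that theorem. For each such candidate, the multiplicity $m = 2\dim V / (q(q-1)^2)$ must be a positive integer; a direct computation with the dimensions recorded in \prettyref{tab:TiepGuralnickTable1} and \prettyref{app:BrauerCharTable} rules out every candidate except $\wh{\alpha}_n$ itself. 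For $\wh{\alpha}_n$ the branching rule \cite[Proposition~7.2]{TiepGuralnick04} shows that $\wh{\alpha}_n|_{Sp_{2n-2}(q)}$ contains $\wh{\alpha}_{n-1}$ only as a proper subconstituent, so iterating, $\wh{\alpha}_n|_{Sp_4(q)}$ carries $Sp_4(q)$-constituents distinct from $\wh{\alpha}_2$ and is therefore not isotypic---contradicting Step~2. This case-by-case elimination completes the proof.
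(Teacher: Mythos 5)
Your reduction to the statement that $V|_{Sp_4(q)}$ is $\wh{\alpha}_2$-isotypic is sound: the orbit computation, the Clifford-theoretic step $V|_{Z_2}=e\omega$ for a single nontrivial orbit, and the identification (via \prettyref{cor:beta2onZ2}) of $\wh{\alpha}_2$ as the only Weil character of $Sp_4(q)$ whose $Z_2$-restriction is supported on one orbit are all correct, and for $n=2$ this finishes the proof. The problem is your endgame for $n\geq 3$. You propose to bound $\dim V$ and then eliminate the candidates listed in \prettyref{thm:lowdimreps} by a divisibility test --- but that theorem classifies low-dimensional Brauer characters only for $Sp_6(q)$ with $q\geq 4$, whereas the statement being proved concerns $Sp_{2n}(q)$ for every $n\geq 2$ and every even $q$. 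For $n\geq 4$ (and for $q=2$) no such candidate list exists in the paper, so the case-by-case elimination cannot even begin there. The check that $q(q-1)^2/2$ divides $\dim V$ does kill the non-$\wh{\alpha}_3$ candidates when $n=3$ and $q\geq 4$, but that covers only a single value of $n$; as written, your argument does not prove the theorem.

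The paper closes the argument uniformly in $n$ by applying \cite[Theorem 1.2]{TiepGuralnick04} to $G=Sp_{2n}(q)$ itself rather than to the constituents of $V|_{Sp_4(q)}$: since $V|_{Z_2}$ is supported on a single nontrivial $P_2^{(n)}$-orbit, $V$ satisfies condition $\mathcal{W}_2^{\pm}$ as a $G$-module, so its Brauer character is one of the Weil characters of $Sp_{2n}(q)$ in \prettyref{tab:TiepGuralnickTable1} --- for any $n$. The branching rules \cite[Propositions 7.2 and 7.7]{TiepGuralnick04} then give the contradiction for $n>2$ directly: a linear Weil character restricted to $Sp_4(q)$ contains $1_{Sp_4(q)}$, so $1_{Z_2}$ occurs in $V|_{Z_2}$; a unitary Weil character restricted to $Sp_4(q)$ contains $\sum_{k}\wh{\zeta}_2^k$, whose $Z_2$-restriction is $(q/2)(\omega_1+2\omega_2^-)$ and hence meets two orbits. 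Either way $V|_{Z_2}$ cannot be a single orbit sum. If you want to repair your write-up, note that your Step 2 already establishes exactly the hypothesis needed to invoke Theorem 1.2 at the level of $G$, so your Step 5 can be replaced by this two-line branching argument.
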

 \begin{proof}
  Assume $n>2$. Let $\chi\in\ibr_\ell(G)$ denote the $\ell$-Brauer character afforded by $V$, and let $\varphi\in\ibr_\ell(H)$ be the $\ell$-Brauer character afforded by $V$ on $H:=P_2^{(n)}$.  Write $Z:=Z_2$.  The nontrivial orbits of the action of $H$ on $\irr(Z)$ and those of $P_2^{(2)}$ on $\irr(Z)$ are the same, with sizes
 \[|\mathcal{O}_1|=q^2-1,\quad |\mathcal{O}_2^-|=\frac{1}{2}q(q-1)^2,\quad |\mathcal{O}_2^+|=\frac{1}{2}q(q^2-1).\]

 By Clifford theory, $\chi|_Z=e\sum_{\lambda\in\mathcal{O}}\lambda$ for one of these orbits $\mathcal{O}$ and some positive integer $e$.  (Note that $\mathcal{O}$ is not the trivial orbit since $G$ is simple, so $\chi$ cannot contain $Z$ in its kernel.)  It is clear from this that $V|_H$ has the property $\mathcal{W}_2^\pm$  in the notation of \cite{TiepGuralnick04}, and therefore by \cite[Theorem 1.2]{TiepGuralnick04}, $\chi$ is one of the Weil characters from \prettyref{tab:TiepGuralnickTable1}.

 If $\chi$ is a linear Weil character, then the branching rules found in \cite[Propositions 7.7]{TiepGuralnick04} imply that $\chi|_{Sp_4(q)}$ contains $1_{Sp_4(q)}$ as a constituent, and so $\chi|_Z$ contains $1_Z$ as a constituent, which is a contradiction.

 If $\chi$ is a unitary Weil character, then the branching rules found in \cite[Proposition 7.2]{TiepGuralnick04} show that $\chi|_{Sp_4(q)}$ contains as a constituent $\sum_{k=1}^{q/2}\wh{\zeta}_2^k$.   But \cite[Lemma 3.8]{TiepGuralnick04} shows that $\zeta_n^i=\alpha_n+\beta_n-1$ on $Z$, so by \prettyref{cor:beta2onZ2}, $\chi|_Z$ contains $(q/2)(\omega_1+2\omega_2^-)$, a contradiction since $\chi|_Z$ can have as constituents $Z$-characters from only one $H$-orbit.

 We therefore see that $n$ must be $2$, and the result follows from \prettyref{thm:alpha2irredP2}.
 \end{proof}

\begin{corollary}\label{cor:resultforparabolicsSp4}
Let $q$ be even.  A nontrivial absolutely irreducible representation $V$ of $Sp_4(q)$ in characteristic $\ell\neq 2$ is irreducible on a maximal parabolic subgroup if and only if the subgroup is $P_2$ and $V$ affords the character $\wh{\alpha}_2$.
\end{corollary}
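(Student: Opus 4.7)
The plan is to combine the two structural results already in place, one per class of maximal parabolic subgroup of $Sp_4(q)$. By the standard description of parabolic subgroups of symplectic groups, $Sp_4(q)$ has, up to conjugacy, exactly two maximal parabolic subgroups: the stabilizer $P_1=\stab_G(\langle e_1\rangle_{\F_q})$ of an isotropic line in the natural module, and the stabilizer $P_2=\stab_G(\langle e_1,e_2\rangle_{\F_q})$ of a totally singular $2$-dimensional subspace. So it suffices to handle each of these in turn.

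First I would dispose of $P_1$. Specializing \prettyref{cor:resultforP1} to the case $n=2$ shows that any irreducible $V$ that remains irreducible on $P_1$ must be trivial; since we have assumed $\dim V>1$, no nontrivial Brauer character of $G$ restricts irreducibly to $P_1$. This takes care of one of the two parabolics immediately, and no further analysis of $Z_1$ is required because the long-root subgroup argument is already packaged in \prettyref{thm:thm1.6fromtiep}.

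Next I would invoke \prettyref{thm:generalresultforP2} with $n=2$. In that case the theorem states precisely that $V$ is absolutely irreducible on $P_2^{(2)}=P_2$ if and only if $V$ affords the $\ell$-modular Weil character $\wh{\alpha}_2$. Thus the only maximal parabolic to which a nontrivial $V$ can restrict irreducibly is $P_2$, and when this occurs the Brauer character is forced to be $\wh{\alpha}_2$. Conversely, the same theorem (together with \prettyref{thm:alpha2irredP2}) asserts that $\wh{\alpha}_2$ does restrict irreducibly to $P_2$, so the Weil module realizing $\wh{\alpha}_2$ furnishes an actual example.

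There is no substantive obstacle here; the corollary is a direct specialization and assembly of the two theorems already proved. The genuine work was carried out earlier: for $P_2$ it was the Clifford-theoretic analysis of $V|_{Z_2}$ together with the characterization \cite[Theorem 1.2]{TiepGuralnick04} of Weil characters by the property $\mathcal{W}_2^\pm$ and the explicit restrictions computed in \prettyref{cor:beta2onZ2}; for $P_1$ it was the long-root fixed-point result \prettyref{thm:thm1.6fromtiep}. I would close by noting that both directions — necessity and the production of the example — are already contained in the invoked results, so the statement of the corollary follows immediately.
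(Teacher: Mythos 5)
Your proposal is correct and matches the paper's argument exactly: the corollary is deduced immediately by applying Corollary \ref{cor:resultforP1} to rule out $P_1$ and Theorem \ref{thm:generalresultforP2} (in the case $n=2$) to characterize the $P_2$ case. No further comment is needed.
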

\begin{proof}
This is immediate from \prettyref{thm:generalresultforP2} and \prettyref{cor:resultforP1}.
\end{proof}
Note that we have now completed the proof of \prettyref{thm:mainresultSp4}.

We will now return to the specific group $G=Sp_6(q)$.  Let $H=P_3=\stab_G(\langle e_1,e_2,e_3\rangle_{\F_q})$ be the third maximal parabolic subgroup, and note that here $Z_3=Q_3$ is elementary abelian of order $q^6$.  We will simply write $Z$ for this group.  The sizes of the four nontrivial orbits of $\irr(Z)$ and the corresponding $L_3$-stabilizers are
 \[|\mathcal{O}_1|=q^3-1,\quad |\stab_{L_3}(\lambda)|=q^3(q-1)(q^2-1);\]
 \[|\mathcal{O}_2^\pm|=\frac{1}{2}q(q\pm 1)(q^3-1),\quad |\stab_{L_3}(\lambda)|=2q^2(q-1)(q\mp 1);\] and
 \[|\mathcal{O}_3|=q^2(q-1)(q^3-1),\quad |\stab_{L_3}(\lambda)|=q(q^2-1).\]

 We begin by considering the ordinary case, $\ell=0$.
 \begin{theorem}\label{thm:complexcaseP3}
 Let $V$ be a nontrivial absolutely irreducible ordinary representation of $G=Sp_6(q)$, $q\geq 4$ even.  Then $V$ is irreducible on $H=P_3$ if and only if it affords the Weil character $\alpha_3$.
 \end{theorem}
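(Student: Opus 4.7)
The plan is to parallel the approach taken for $P_2^{(n)}$ in \prettyref{thm:generalresultforP2}, adapted to the abelian unipotent radical $Z := Z_3 = Q_3$ of $H = P_3$. The ``if'' direction is immediate from \prettyref{thm:alphanirredonPn} specialized to $n=3$, which shows that the module affording $\alpha_3$ restricts irreducibly to $P_3$.

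For the ``only if'' direction, let $\chi$ be the character afforded by $V$ and suppose $\chi|_H \in \irr(H)$. Since $Z$ is a normal elementary abelian subgroup of $H$, Clifford theory yields
\[
\chi|_Z = e \sum_{\lambda \in \mathcal{O}} \lambda
\]
for some $H$-orbit $\mathcal{O}$ on $\irr(Z)$ and some integer $e \geq 1$. The orbit $\mathcal{O}$ must be nontrivial, for otherwise $Z \leq \ker\chi$, contradicting the simplicity of $G$ together with the hypothesis $\chi \neq 1_G$. Consequently $\chi$ satisfies the hypothesis of \cite[Theorem~1.2]{TiepGuralnick04} (property $\mathcal{W}_3^\pm$), so $\chi$ must be one of the Weil characters listed in \prettyref{tab:TiepGuralnickTable1}, namely $\alpha_3$, $\beta_3$, $\rho_3^1$, $\rho_3^2$, some $\tau_3^i$, or some $\zeta_3^i$.

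Finally, I would eliminate every Weil character other than $\alpha_3$ by a numerical comparison. Recall that the nontrivial $H$-orbits on $\irr(Z)$ have sizes
\[
|\mathcal{O}_1| = q^3 - 1, \qquad |\mathcal{O}_2^\pm| = \tfrac{1}{2}q(q\pm 1)(q^3 - 1), \qquad |\mathcal{O}_3| = q^2 (q-1)(q^3-1),
\]
so the irreducibility of $\chi|_H$ forces $\chi(1) = e|\mathcal{O}|$ for one of these four values and some integer $e \geq 1$. Direct verification gives $\alpha_3(1) = |\mathcal{O}_2^-|$ (with $e = 1$), while for each of $\beta_3, \rho_3^1, \rho_3^2, \tau_3^i$, and $\zeta_3^i$ a routine divisibility check (using $q \geq 4$) shows that $\chi(1)$ fails to be a positive integer multiple of any of the four orbit sizes above. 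The principal obstacle is this case-by-case numerical check; if any borderline case appears delicate (for example in small characteristic or at $q = 4$), one can cross-check by computing $\chi|_{Z_2^{(2)}}$ for the subgroup $Z_2^{(2)} \leq Z_3$ via the branching rules of \cite[Propositions 7.2, 7.7]{TiepGuralnick04} combined with \prettyref{cor:beta2onZ2}, and observing that $\chi|_{Z_2^{(2)}}$ spreads across more than one $P_2^{(2)}$-orbit for every non-$\alpha_3$ Weil character, which is incompatible with the single-orbit form of $\chi|_{Z_3}$.
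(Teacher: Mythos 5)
Your ``if'' direction coincides with the paper's (both cite \prettyref{thm:alphanirredonPn}), but the ``only if'' direction has two genuine gaps. First, the reduction to Weil characters via \cite[Theorem 1.2]{TiepGuralnick04} does not transfer from $P_2$ to $P_3$: the condition $\mathcal{W}_3^{\pm}$ requires that the nontrivial constituents of $V|_{Z_3}$ correspond to quadratic forms of rank at most $2$ (with a fixed type when the rank is exactly $2$), whereas a single $P_3$-orbit on $\irr(Z_3)$ may well be the rank-$3$ orbit $\mathcal{O}_3$. For $P_2^{(n)}$ every nontrivial orbit on $\irr(Z_2)$ has rank at most $2$, which is exactly why the single-orbit conclusion yields $\mathcal{W}_2^{\pm}$ in \prettyref{thm:generalresultforP2}; for $Z_3$ the case $\mathcal{O}=\mathcal{O}_3$ is simply not covered, so non-Weil characters whose $Z_3$-restriction is supported on $\mathcal{O}_3$ are never excluded by your argument. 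Second, your closing divisibility check is false for $\zeta_3^i$: its degree $(q^3-1)(q^2-q+1)$ \emph{is} a positive integer multiple of $|\mathcal{O}_1|=q^3-1$, so orbit sizes alone do not eliminate it. Your fallback via $Z_2^{(2)}$ also rests on a false premise: a single-orbit restriction to $Z_3$ does not force a single-orbit restriction to $Z_2^{(2)}$ (the rank of $q_Y$ drops unpredictably on the subspace $\langle f_1,f_2\rangle_{\F_q}$), and indeed $\alpha_3|_{Z_3}=\sum_{\lambda\in\mathcal{O}_2^-}\lambda$ while $\alpha_3|_{Z_2^{(2)}}$ meets several $P_2^{(2)}$-orbits, so the claimed ``incompatibility'' would wrongly rule out $\alpha_3$ itself.

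The paper's proof avoids both problems and never invokes the Weil classification: since $Z_3$ is abelian and normal in $P_3$, Ito's theorem gives the upper constraint $\chi(1)\mid [P_3:Z_3]=q^3(q-1)(q^2-1)(q^3-1)$, Clifford theory gives the lower constraint $(q^3-1)\mid\chi(1)$ (every nontrivial orbit size is divisible by $q^3-1$, including $|\mathcal{O}_3|$), and inspection of the full degree list of $Sp_6(q)$ in \cite{luebeckwebsite} then leaves only $\alpha_3$. The Ito constraint is precisely what kills $\zeta_3^i$: one would need $(q^2-q+1)\mid q^3(q-1)(q^2-1)$, but $\gcd\bigl(q^2-q+1,\,q^3(q-1)(q^2-1)\bigr)$ divides $3$ while $q^2-q+1\ge 13$ for $q\ge 4$. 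To repair your argument you would need to add this ingredient (or an equivalent bound) and treat the orbits $\mathcal{O}_1$ and $\mathcal{O}_3$ on the same footing as the rank-$2$ orbits, at which point you have essentially reconstructed the paper's proof.
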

 \begin{proof}
 Note that $\alpha_3$ is irreducible on $H$ by \prettyref{thm:alphanirredonPn}.  Conversely, suppose that $\chi\in\irr(G)$ is irreducible when restricted to $H$.  Since $Z\lhd H$ is abelian, it follows from Ito's theorem that $\chi(1)$ divides $[H:Z]=q^3(q-1)(q^2-1)(q^3-1).$  Moreover, by Clifford theory, if $\lambda\in\irr(Z)$ such that $\chi|_H\in\irr(H|\lambda)$, then $\chi(1)$ is divisible by the size of the $H$-orbit $\mathcal{O}$ containing $\lambda$.  In particular, this means that $q^3-1$ must divide $\chi(1)$.  (Note that $\lambda\neq 1$, since $G$ is simple and thus $Z$ cannot be contained in the kernel of $\chi$.)  However, from inspection of the character degrees given in \cite{luebeckwebsite}, it is clear that the only irreducible ordinary character of $G$ satisfying these conditions is $\alpha_3$.

 \end{proof}

 Given any $\varphi\in\ibr_\ell(H)$ and a nontrivial irreducible constituent $\lambda$ of $\varphi|_Z$, we know by Clifford theory that $\varphi=\psi^H$ for some $\psi\in\ibr_\ell(I|\lambda)$, where $I=\stab_H(\lambda)$.  Then $\psi|_Z=\psi(1)\cdot \lambda$ and therefore $\ker\lambda\in\ker\psi$.  Note that $|Z/\ker\lambda|=2$ since $Z$ is elementary abelian and $\lambda$ is nontrivial.   Viewing $\psi$ as a Brauer character of $I/\ker\psi$, we see
\[\psi(1)\leq \sqrt{|I/\ker\psi|}\leq\sqrt{|I/\ker\lambda|}=\left(\frac{|Z|\cdot|\stab_{L_3}(\lambda)|}{\ker\lambda}\right)^{1/2}=\sqrt{2|\stab_{L_3}(\lambda)|}\]

Now, $\varphi(1)=\psi(1)\cdot|\mathcal{O}|$ where $\mathcal{O}$ is the $H$-orbit of $\irr(Z)$ which contains $\lambda$.  If $\lambda\in\mathcal{O}_1$, this yields
\[\varphi(1)\leq (q^3-1)\sqrt{2q^3(q-1)(q^2-1)}=(q-1)(q^3-1)\sqrt{2q^3(q+1)},\] and we will denote this upper bound by $B_1$.

If $\lambda\in\mathcal{O}_2^\pm$, then we see similarly that
\[\varphi(1)\leq \frac{1}{2}q(q\pm1)(q^3-1)\sqrt{4q^2(q-1)(q\mp1)}.\]  We will denote this bound by $B_2^\pm$, so
\[B_2^-:=q^2(q-1)(q^3-1)\sqrt{q^2-1}, \quad\hbox{ and }\quad B_2^+:=q^2(q^2-1)(q^3-1).\]

For $\lambda\in\mathcal{O}_3$, we have $I=Z\colon Sp_2(q)$.  If we denote $K:=\ker\psi$, then $(K\cdot Sp_2(q))/K\leq I/K$.  But $(K\cdot Sp_2(q))/K\cong Sp_2(q)/(K\cap Sp_2(q))\cong Sp_2(q)$ or $\{1\}$ since $Sp_2(q)$ is simple for $q\geq 4$.  Thus either $I/\ker\psi$ contains a copy of $Sp_2(q)$ as a subgroup of index at most $2$ or $\psi(1)=1$.  Moreover, $(ZK)/K\lhd I/K$. But $(ZK)/K\cong Z/(Z\cap K)=Z/\ker\lambda\cong\Z/2\Z$, and thus $I/K$ contains a normal subgroup of size $2$.  Assuming we are in the case that $I/K$ contains a copy of $Sp_2(q)$, we know this normal subgroup intersects $Sp_2(q)$ trivially, and thus $I/K\cong \Z/2\times Sp_2(q)$.  In either case, $\psi(1)\leq \mathfrak{m}(Sp_2(q))=q+1$, and therefore
\[\varphi(1)\leq (q+1)q^2(q-1)(q^3-1)=q^2(q^2-1)(q^3-1),\] which we will denote by $B_3$.  Note that $B_3=B_2^+>B_2^->B_1$ for $q\geq 4$.

\begin{theorem}\label{thm:resultforP3}
Let $G=Sp_6(q)$, $q\geq 4$ even, and let $H=P_3$.  Then a nontrivial absolutely irreducible $G$-module $V$ in characteristic $\ell\neq 2$ is irreducible on $H$ if and only if $V$ affords the $\ell$-Brauer character $\wh{\alpha}_3$.
\end{theorem}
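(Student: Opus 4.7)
The plan has two directions. The ``if'' direction is immediate from \prettyref{thm:alphanirredonPn} applied with $n = 3$, which gives that $\wh{\alpha}_3$ restricts irreducibly to $P_3$. For the ``only if'' direction, suppose $\chi \in \ibr_\ell(G)$ with $\chi(1) > 1$ and $\chi|_H \in \ibr_\ell(H)$. The Brauer-character bound derived just before the theorem yields $\chi(1) \leq B_3 = q^2(q^2-1)(q^3-1)$. Moreover, since $G$ is simple and $Z := Z_3 \lhd H$ is a nontrivial elementary abelian normal subgroup, Clifford's theorem forces $\chi|_Z$ to be a positive integer multiple of the orbit sum of a single nontrivial $H$-orbit on $\irr(Z)$. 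The smallest such orbit has size $q^3 - 1$, so $(q^3 - 1) \mid \chi(1)$.

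Combining the degree bound and the divisibility condition with \prettyref{thm:lowdimreps}, and eliminating each listed degree by a direct check, reduces the candidates for $\chi$ to: $\wh{\alpha}_3$; the Weil lifts $\wh{\zeta}_3^j$ for $1 \leq j \leq ((q+1)_{\ell'}-1)/2$; the two non-unipotent semisimple lifts of degrees $(q-1)(q^2+1)(q^3-1)$ and $(q-1)(q^2+1)(q^4+q^2+1)$ coming from classes $c_{6,0}$ and $c_{10,0}$; and, only when $\ell \mid q+1$, the unipotent lift $\wh{\chi}_{28}$ together with the two non-lift irreducible Brauer characters $\wh{\beta}_3 - 1_G$ and $\wh{\chi}_{35} - \wh{\chi}_5$. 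For each candidate of the form $\wh{\psi}$ with $\psi \in \irr(G)$, \prettyref{lem:ifacharacterlifts} forces $\psi|_H \in \irr(H)$, so \prettyref{thm:complexcaseP3} gives $\psi = \alpha_3$; this eliminates every lift except $\wh{\alpha}_3$.

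It remains to handle the two non-lift candidates when $\ell \mid q+1$. For $\wh{\chi}_{35} - \wh{\chi}_5$, checking integrality of $e = (\wh{\chi}_{35}-\wh{\chi}_5)(1)/|\mathcal{O}|$ against each of $|\mathcal{O}_1|, |\mathcal{O}_2^\pm|, |\mathcal{O}_3|$ shows that only $\mathcal{O}_1$ yields an integer, forcing $e = (q^2+1)(q^2-q+1) - q(q-1)/2$. However, a direct expansion shows $e > \sqrt{2|\stab_{L_3}(\lambda)|} = \sqrt{2q^3(q-1)(q^2-1)}$ for all $q \geq 4$ (asymptotically $e \sim q^4$ while the bound is $\sim \sqrt{2}\,q^3$), contradicting the Brauer-character bound on the stabilizer derived before the theorem.

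For $\wh{\beta}_3 - 1_G$, the orbit-integrality check again leaves only $\mathcal{O}_1$, but here the resulting multiplicity $e = (\beta_3(1)-1)/(q^3-1)$ is too small to violate the stabilizer bound, so a finer argument is needed. The plan is to apply \prettyref{lem:Lemma1forO6} with $\chi = \beta_3$: it suffices to verify that $\beta_3|_H - \lambda \notin \irr(H)$ for every $\lambda \in \widehat{H} \cup \{0\}$. The case $\lambda = 0$ is handled by \prettyref{thm:complexcaseP3}. For each nontrivial linear $\lambda$, one checks via the Clifford-theoretic classification of $\irr(H)$ (using the orbit-stabilizer data tabulated just before the theorem) that $\beta_3(1) - 1$ is not attained as the degree of any irreducible character of $H$ lying above a nontrivial $H$-orbit on $\irr(Z)$. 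The main obstacle is this last degree-check, which requires enumeration of the possible irreducible character degrees of the stabilizers $\stab_{L_3}(\lambda_\mathcal{O}) \cdot Z$ for each nontrivial orbit $\mathcal{O}$ and confirmation that $\beta_3(1)-1$ does not appear among them.
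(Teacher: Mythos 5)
Your overall architecture matches the paper's: the ``if'' direction via \prettyref{thm:alphanirredonPn}, and the ``only if'' direction via the bound $\varphi(1)\le B_3$, the divisibility $(q^3-1)\mid\chi(1)$ forced by the orbit sizes on $\irr(Z_3)$, and the reduction of every lifting candidate to the ordinary case through \prettyref{lem:ifacharacterlifts} and \prettyref{thm:complexcaseP3}. There is, however, one genuine gap. When you claim that the degree bound, the divisibility condition, and ``eliminating each listed degree by a direct check'' reduce the possibilities to your explicit candidate list, you are implicitly assuming that the lower bound $q(q^4+q^2+1)(q-1)^3/2$ in part B(4) of \prettyref{thm:lowdimreps} exceeds $B_3=q^2(q^2-1)(q^3-1)$; part B(4) lists no degrees, only that lower bound. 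The inequality fails for $q=4$: the bound is $14742$ while $B_3=15120$, and there really is a non-unipotent-block Brauer character of degree exactly $14742=819\cdot 18=234\cdot(q^3-1)$ (in the series of $s\in c_{4,0}$, corresponding to $\wh{\alpha}_2$ on the $Sp_4(4)$ factor of $C_{G^\ast}(s)$), which survives both your degree filter and your divisibility filter yet appears nowhere on your list. The paper closes this with a separate paragraph for $q=4$, using \prettyref{lem:moritaequiv} and \prettyref{prop:jordandecompforibrs} to show that any such character either has degree $[G^\ast:C_{G^\ast}(s)]_{2'}\cdot\varphi(1)>B_3$ or corresponds to $\wh{\alpha}_2$ and hence lifts; you need this (or an equivalent) step.

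On the other hand, your treatment of the non-lifting candidates for $\ell\mid(q+1)$ is a point in your favor. The paper disposes of $\wh{\beta}_3-1_G$ by asserting its degree is not divisible by $q^3-1$, but in fact $\beta_3(1)-1=\tfrac{1}{2}(q^3-1)(q^2-q+2)$ \emph{is} divisible by $q^3-1$ (for instance $441=7\cdot 63$ when $q=4$), so the paper's one-line dismissal does not apply and an extra argument of the kind you propose is genuinely needed. Your outline does close: only $\mathcal{O}_1$ passes the integrality test, forcing $\psi(1)=(q^2-q+2)/2$ for some $\psi\in\ibr_\ell(I|\lambda)$ with $I/\ker\lambda$ a central extension of $[q^2]{:}GL_2(q)$ by $C_2$, and this degree lies strictly between $q+1$ and $q^2-1$ so cannot occur; alternatively one can compute, in the spirit of \prettyref{cor:beta2onZ2}, that $\beta_3|_{Z_3}=1_{Z_3}+\omega_1+\omega_2^-$ involves two nontrivial orbits, whence $\beta_3|_H-\lambda$ is never irreducible and \prettyref{lem:Lemma1forO6} finishes. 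Your argument for $\wh{\chi}_{35}-\wh{\chi}_5$ (integrality forces $\mathcal{O}_1$, then $e>\sqrt{2|\stab_{L_3}(\lambda)|}$) is a correct variant of the paper's parity-plus-$B_2^-$ argument. So: repair the $q=4$ non-unipotent-block case and the proof is complete.
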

\begin{proof}
That $\wh{\alpha}_3$ is irreducible on $H$ follows from \prettyref{thm:alphanirredonPn}.  Conversely, suppose that $V$ affords $\chi\in\ibr_\ell(G)$ and that $\chi|_H=\varphi\in\ibr_\ell(H)$.  We claim that $\chi$ must lift to an ordinary character, so the result follows from \prettyref{thm:complexcaseP3}.  We will keep the notation from the above discussion.

First suppose that $\chi$ does not lie in a unipotent block.  As the bound $q(q-1)^3(q^4+q^2+1)/2$ in part (B) of \prettyref{thm:lowdimreps} is larger than $B_2^-$ and is larger than $B_3$ unless $q=4$, it follows that either $\chi$ lifts to an ordinary character or $q=4$ and $\lambda\in\mathcal{O}_3$ or $\mathcal{O}_2^+$.

Now let $q=4$.  We identify $G$ with $SO_{7}(4)$ so that $G^\ast=Sp_6(4)$.  Let $\mathfrak{u}(C_{G^\ast}(s))$ denote the smallest degree larger than $1$ of an irreducible Brauer character lying in a unipotent block of $C_{G^\ast}(s)$ for a semisimple element $s$.  Using the same argument as in the proof of part (B) of \prettyref{thm:lowdimreps}, we note that for a nontrivial semisimple element $s\in G^\ast$, $\mathfrak{u}(C_{G^\ast}(s))[G^\ast:C_{G^\ast}(s)]_{2'}>B_3$ unless $s$ belongs to a class in the family $c_{3,0}$ or $c_{4,0}$.  In this case, $C_{G^\ast}(s)\cong Sp_4(q)\times C$ for a cyclic group $C$.

Now, the Brauer character tables of $Sp_4(4)$ are available in the GAP Character Table Library, \cite{GAP4},\cite{GAPctlib}.  We can see that the smallest nonprincipal character degree of $Sp_4(4)$ for any $\ell\neq 2$ is $18$.  This corresponds to $\wh{\alpha}_2$, which clearly lifts to $\C$, so by the Morita equivalence guaranteed by \prettyref{lem:moritaequiv}, $\chi$ also lifts if it corresponds to this character.  The next smallest degree is $33$ if $\ell=5$ and $34$ if $\ell=3,17$.    If $s\in c_{3,0}$, then $[G^\ast:C_{G^\ast}(s)]_{2'}=1365$, and $1365\cdot33= 45045>15120=B_3$.  If $s\in c_{4,0}$, then $[G^\ast:C_{G^\ast}(s)]_{2'}=819$, and $819\cdot33=27027>15120=B_3$.  It follows that in this case, $\chi$ must again lift to an ordinary character.

Now assume $\chi$ lies in a unipotent block.  Note that the bound $D$ in part (A) of \prettyref{thm:lowdimreps} is larger than $B_3$ for $q\geq 4$.  Hence, $\chi$ must be as in situations A(1), A(2), or A(3) of \prettyref{thm:lowdimreps}.  Also, note that $\chi(1)$ must be divisible by $(q^3-1)$, as $|\mathcal{O}_1|$, $|\mathcal{O}_2^\pm|$, and $|\mathcal{O}_3|$ are all divisible by $(q^3-1)$.  Therefore, $\chi$ cannot be any of the characters $\wh{\rho}_3^1-1, \wh{\rho}_3^2-1,$ $\wh{\beta}_3-1$, $\wh{\chi}_6-1$ or $\wh{\chi}_7-\wh{\chi}_4$.  Thus in the case $\ell|(q^3-1)(q^2+1)$ or $3\neq\ell|(q^2-q+1)$, we know from \prettyref{thm:lowdimreps} that $\chi$ lifts to an ordinary character.

Now assume $\ell|(q+1)$ and that $\chi$ does not lift to an ordinary character.  Then by the above remarks, $\chi$ must be $\wh{\chi}_{35}-\wh{\chi}_5$, which has degree larger than $B_2^-$ and is odd.  Since $|\mathcal{O}_3|$ and $|\mathcal{O}_2^+|$ are each even, this shows our $\chi$ cannot be this character.  So, $\chi$ must again lift to an ordinary character.

\end{proof}

\begin{corollary}\label{cor:resultforparabolics}
Let $G=Sp_6(q)$ with $q\geq 4$ even.  A nontrivial absolutely irreducible $G$-module $V$ in characteristic $\ell\neq 2$ is irreducible on a maximal parabolic subgroup $P$ if and only if $P=P_3$ and $V$ affords the $\ell$-Brauer character $\wh{\alpha}_3$.
\end{corollary}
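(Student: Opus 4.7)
The plan is to assemble this corollary directly from the three main results already established in this section, since the maximal parabolic subgroups of $G=Sp_6(q)$ are exhausted by $P_1$, $P_2$, and $P_3$ (the stabilizers of totally singular subspaces of dimensions $1$, $2$, and $3$ of the natural module $\F_q^6$). First, I would note that it is already shown in Theorem \ref{thm:alphanirredonPn} (or as part of Theorem \ref{thm:resultforP3}) that $\wh{\alpha}_3$ does restrict irreducibly to $P_3$, so one direction is immediate and the work lies entirely in the converse.

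For the converse, suppose that $V$ is a nontrivial absolutely irreducible $\F G$-module in characteristic $\ell \neq 2$ whose restriction to some maximal parabolic $P$ is irreducible. I would split into three cases according to whether $P$ is conjugate to $P_1$, $P_2$, or $P_3$. If $P=P_1$, then Corollary \ref{cor:resultforP1} immediately forces $V$ to be the trivial module, contradicting the assumption. If $P=P_2$, then Theorem \ref{thm:generalresultforP2} applied with $n=3$ rules out the existence of any such $V$, since that theorem asserts that irreducible restriction to $P_2^{(n)}$ can happen only for $n=2$. Finally, if $P=P_3$, then Theorem \ref{thm:resultforP3} gives precisely that $V$ must afford the Brauer character $\wh{\alpha}_3$.

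Putting these three cases together yields the corollary. There is no real obstacle to overcome here: the technical heavy lifting -- the Clifford-theoretic analysis of orbits of $L_j$ on $\mathrm{Irr}(Z_j)$, the use of Ito's theorem, the reduction to linear/unitary Weil characters via property $\mathcal{W}_2^{\pm}$, and the dimension bounds $B_1, B_2^{\pm}, B_3$ combined with Theorem \ref{thm:lowdimreps} -- has all been carried out in proving Corollary \ref{cor:resultforP1}, Theorem \ref{thm:generalresultforP2}, and Theorem \ref{thm:resultforP3}. The only ingredient to verify beyond those theorems is that the maximal parabolics of $Sp_6(q)$ are exactly $P_1$, $P_2$, $P_3$ up to $G$-conjugacy, which is standard (see, e.g., \cite{KleidmanLiebeck}). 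Thus the proof should be only a few lines, essentially a case-split invoking the three prior results.
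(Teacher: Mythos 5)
Your proposal is correct and matches the paper's own proof, which likewise deduces the corollary directly from Corollary \ref{cor:resultforP1}, Theorem \ref{thm:generalresultforP2}, and Theorem \ref{thm:resultforP3} via the same case split over the three maximal parabolics.
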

\begin{proof}
This follows directly from \prettyref{cor:resultforP1}, \prettyref{thm:generalresultforP2}, and \prettyref{thm:resultforP3}.

\end{proof}

\subsection{Descent to Subgroups of $P_3$}

Let $Z=Z_3$ be the unipotent radical of $P_3$, and let $R\leq Z$ be the subgroup $[q^3]$ given by matrices $C\in Z$ with zero diagonal.  That is, \[R=\left\{\left(
           \begin{array}{cc}
             I & C \\
             0 & I \\
           \end{array}
         \right)\colon C\in M_3(q),  C+C^T=0, \hbox{ $C$ has diagonal $0$}\right\}.\]

Note that the subgroups $L_3\cong GL_3(q)$ and $L_3'\cong SL_3(q)$ of $P_3$ each act transitively on $R\setminus 0$.

Let $\lambda=\lambda_Y$ be an irreducible character of $Z$  corresponding to the matrix $Y\in M_3(q)$, and write $\lambda|_R=\mu=\mu_Y$.  If $\lambda'$ is another such character corresponding to $Y'$ and $\lambda'|_R=\mu'$, then we have $\mu=\mu'$ if and only if $(Y+Y')+(Y+Y')^T=0$.  (Note that unlike characters of $Z$, we do not require that $Y, Y'$ have the same diagonal.)  Hence $\mu_Y=\mu_{X^TYX}$ for $X\in GL_3(q)$ if and only if $X^T(Y+Y^T)X=Y+Y^T$.  That is, $X$ is in the isometry group of the form with Gram matrix $Y+Y^T$.  As the action of $X\in L_3\cong GL_3(q)$ on $\mu_Y$ is given by $(\mu_Y)^X=\mu_{X^TYX}$, this means that $\stab_{L_3}(\mu)$ is this isometry group..

In particular, if $\lambda$ is in the $P_3$-orbit $\mathcal{O}_2^-$ of linear characters of $Z$, then this means that $\stab_{L_3}(\lambda|_R)=[q^2]: (\F_q^\times\times Sp_2(q))=[q^2]:GL_2(q)$.  Recall that from the proof of \prettyref{thm:alphanirredonPn}, $\alpha_3|_Z=\omega_2^-$ is the orbit sum corresponding to $\mathcal{O}_2^-$.  Hence we have
\[\stab_{L_3}(\mu)=[q^2]:GL_2(q)\]
if $\mu$ is a constituent of $\alpha_3|_R$.   Taking the elements of this stabilizer with determinant one, we also see
\[\stab_{L_3'}(\mu)=[q^2]:SL_2(q).\]

\begin{lemma}\label{lem:alphairredZSL}
The Brauer character $\wh{\alpha}_3$ is irreducible on the subgroup $P_3'=Z:SL_3(q)$ of $P_3$.
\end{lemma}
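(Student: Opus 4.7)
The plan is to deduce the irreducibility of $\wh\alpha_3|_{P_3'}$ from the already-established irreducibility of $\wh\alpha_3|_{P_3}$ (\prettyref{thm:alphanirredonPn}) by applying Clifford theory to the normal subgroup $P_3'\trianglelefteq P_3$, whose quotient $P_3/P_3'\cong L_3/L_3'\cong\F_q^\times$ is cyclic of order $q-1$. The only nontrivial input needed is that the $P_3$-orbit $\mathcal{O}_2^-\subset\irr(Z)$ that supports $\wh\alpha_3|_Z=\omega_2^-$ remains a single orbit under $P_3'$.

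For the orbit claim, since $Z$ acts trivially on $\irr(Z)$, it suffices to show the $L_3$-orbit $\mathcal{O}_2^-$ equals a single $L_3'$-orbit. Fix $\lambda=\lambda_Y\in\mathcal{O}_2^-$; the quadratic form $q_Y$ on $\F_q^3$ has rank $2$ and minus type, hence possesses a one-dimensional radical $U$. Choosing a basis $\{v_1,v_2,v_3\}$ with $v_3$ spanning $U$, any $A\in\stab_{L_3}(\lambda)$ sends $U$ into $U$ and induces on $\F_q^3/U$ an isometry of an anisotropic rank $2$ minus-type form, while being unconstrained on the radical-component of $Av_1,Av_2$. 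Hence $A$ takes the block-triangular form $\bigl(\begin{smallmatrix}A_1 & 0\\ w & c\end{smallmatrix}\bigr)$ with $A_1\in O_2^-(q)$, $w\in\F_q^2$ arbitrary, and $c\in\F_q^\times$. Because $q$ is even, $\det A_1=1$, and so $\det A=c$. Letting $c$ range over $\F_q^\times$ shows that the determinant map $\stab_{L_3}(\lambda)\to L_3/L_3'=\F_q^\times$ is surjective, so $L_3=L_3'\cdot\stab_{L_3}(\lambda)$ and the $L_3'$-orbit of $\lambda$ fills all of $\mathcal{O}_2^-$.

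For the Clifford step, write $\wh\alpha_3|_{P_3'}=e\sum_{i=1}^t\theta_i$ where $\theta_1,\dots,\theta_t$ is the $P_3$-orbit in $\ibr_\ell(P_3')$ under any irreducible constituent. By Clifford, each $\theta_i|_Z$ is an integer multiple of the sum over a single $P_3'$-orbit of $\irr(Z)$. Since $\wh\alpha_3|_Z=\omega_2^-$ is supported on $\mathcal{O}_2^-$, the constituents of $\theta_i|_Z$ lie in $\mathcal{O}_2^-\cup\{1_Z\}$; by the orbit claim, $\theta_i|_Z$ is either trivial on $Z$ or equals $f_i\omega_2^-$ for some integer $f_i\geq 1$. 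If $Z\leq\ker\theta_i$ for some $i$, then $P_3$-conjugacy (which preserves $Z$) forces $Z\leq\ker\theta_j$ for every $j$, contradicting $\wh\alpha_3|_Z=\omega_2^-\neq\wh\alpha_3(1)\cdot 1_Z$. Hence $\theta_i|_Z=f_i\omega_2^-$ with $f_i\geq 1$ for all $i$, and comparing with $\wh\alpha_3|_Z=\omega_2^-$ forces $e\,(f_1+\cdots+f_t)=1$, so $e=t=f_1=1$ and $\wh\alpha_3|_{P_3'}\in\ibr_\ell(P_3')$.

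The main obstacle is the orbit argument of the second paragraph; its content reduces to the familiar fact that in characteristic $2$ the orthogonal group $O_2^-(q)$ lies in $SL_2(q)$, so the determinant of an isometry of $q_Y$ is entirely determined by its scalar action on the radical line, which is unrestricted. Once this is in hand, everything else is formal Clifford theory together with the already-known structure of $\wh\alpha_3|_{P_3}$.
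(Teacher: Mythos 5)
Your argument is correct and follows essentially the same route as the paper: both proofs reduce to showing that $\mathcal{O}_2^-$ is a single $P_3'$-orbit by identifying $\stab_{L_3'}(\lambda)$ as the determinant-one part $[q^2]{:}O_2^-(q)$ of $\stab_{L_3}(\lambda)\cong[q^2]{:}(\F_q^\times\times O_2^-(q))$ (your surjectivity of the determinant map, using $O_2^-(q)\le SL_2(q)$ in characteristic $2$, is exactly this computation made explicit), and then conclude by Clifford theory since the orbit length equals $\alpha_3(1)$. The paper phrases the last step as a direct orbit-length count rather than your multiplicity bookkeeping, but the content is identical.
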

\begin{proof}
Let $\lambda$ be an irreducible constituent of $\wh{\alpha}_3|_{Z}$, so that $\lambda\in\mathcal{O}_2^-$.  Recall that the stabilizer in $L_3\cong GL_3(q)$ is $\stab_{L_3}(\lambda)\cong[q^2]:(\F_q^\times \times O_2^-(q))$.  Taking the elements in this group with determinant $1$, we see that the stabilizer in $SL_3(q)$ is isomorphic to $[q^2]:(\mathcal{O}_2^-(q))$, and hence the $P_3'$-orbit has length
\[\frac{q^9(q^2-1)(q^3-1)}{2q^8(q+1)}=\frac{1}{2}q(q-1)(q^3-1)=|\mathcal{O}_2^-|=\alpha_3(1).\] Therefore, $\wh{\alpha}_3|_{P_3'}$ is irreducible.

\end{proof}

\begin{lemma}\label{lem:subgpcontainsSL}
Let $G=Sp_6(q)$ with $q\geq 4$ even, and let $V$ be an absolutely irreducible $G$-module $V$ which affords the Brauer character $\wh{\alpha}_3$.  Write $Z=Z_3$ for the unipotent radical of the parabolic subgroup $P_3$ and $L=L_3$ for the Levi subgroup.  If $H<P_3$ with $V|_H$ irreducible, then $ZH$ contains $P_3'=Z:L'=Z:SL_3(q)$.
\end{lemma}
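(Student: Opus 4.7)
The plan is to convert the irreducibility of $V|_H$ into a transitivity statement via Clifford theory, and then show the resulting transitive subgroup of $L=GL_3(q)$ must contain $L'=SL_3(q)$.

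Setting $K:=ZH$, the inclusion $H\leq K$ forces $V|_K$ to be irreducible. From the proof of \prettyref{thm:alphanirredonPn}, $\wh\alpha_3|_Z=\omega_2^-=\sum_{\lambda\in\mathcal{O}_2^-}\lambda$, with each $\lambda$ appearing with multiplicity one. Applying Clifford theory to the normal inclusion $Z\trianglelefteq K$ for any $\lambda\in\mathcal{O}_2^-$ with stabilizer $T:=\stab_K(\lambda)$, the $\lambda$-isotypic component of $V|_Z$ is a one-dimensional $T$-module $\tilde\lambda$, so $V|_K=\ind_T^K\tilde\lambda$, forcing $[K:T]=\dim V=|\mathcal{O}_2^-|$. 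Thus the $K$-orbit of $\lambda$ exhausts $\mathcal{O}_2^-$, and since $Z$ acts trivially on its own characters, $\bar K:=K/Z\leq L$ acts transitively on $\mathcal{O}_2^-$.

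It remains to show any such transitive $\bar K$ contains $SL_3(q)$. The $L$-stabilizer of $\lambda$ is $S=[q^2]\rtimes(\F_q^\times\times O_2^-(q))$, of order $2q^2(q-1)(q^2-1)$; transitivity is equivalent to the factorization $\bar K\cdot S=L$, and orbit-stabilizer gives $|\bar K|=|\mathcal{O}_2^-|\cdot|\bar K\cap S|$. I would then project via the ``radical of the quadratic form'' map $\mathcal{O}_2^-\to\mathbb{P}^2(\F_q)$, whose fibers have size $\tfrac12 q(q-1)^2$, to deduce that $\bar K$ is also transitive on the $q^2+q+1$ points of $\mathbb{P}^2(\F_q)$, so $\bar K$ lies in no parabolic of $L$. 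Combined with the lower bound $|\bar K|\geq|\mathcal{O}_2^-|=\tfrac12 q(q-1)(q^3-1)$ and Aschbacher's classification of maximal subgroups of $GL_3(q)$, the surviving candidates outside $\{M\colon M\supseteq SL_3(q)\}$ (field-extension $\mathcal{C}_3$, subfield $\mathcal{C}_5$, orthogonal $\mathcal{C}_8$, and a handful of $\mathcal{S}$-type almost-simple groups) are all too small for $q\geq4$, forcing $SL_3(q)\leq\bar K$, i.e., $ZH\supseteq P_3'$.

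The main obstacle will be the case analysis in this last step. Ruling out subfield subgroups $SL_3(q_0).d\leq\bar K$ for $q=q_0^r$, $r\geq 2$, requires comparing $|SL_3(q_0).d|\leq 2q_0^8\leq 2q^4$ against $|\mathcal{O}_2^-|\sim\tfrac12 q^5$, together with the divisibility constraint that $|\mathcal{O}_2^-|$ divides $|\bar K|$. The $\mathcal{C}_3$ normalizer has order $3(q^3-1)$ and $\mathcal{C}_8$ has order roughly $q(q^2-1)$, both easily ruled out; the finitely many $\mathcal{S}$-type exceptions are dispatched by direct order comparison.
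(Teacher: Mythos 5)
Your opening moves coincide with the paper's: pass to $K=ZH$, observe $V|_K$ is irreducible, and use Clifford theory for $Z\lhd K$ together with the multiplicity-one decomposition $\wh{\alpha}_3|_Z=\sum_{\lambda\in\mathcal{O}_2^-}\lambda$ to turn irreducibility into a transitivity statement for $\bar K=ZH/Z\leq GL_3(q)$. The divergence is in \emph{what} you prove transitivity on. The paper restricts further to the subgroup $R<Z$ of zero-diagonal (alternating) matrices: since $\alpha_3|_R$ is $\tfrac{1}{2}q(q-1)$ times the sum of \emph{all} $q^3-1$ nontrivial characters of $R\cong\F_q^3$, Clifford theory for $R\lhd ZH$ makes $\bar K$ transitive on the nonzero vectors of a $3$-dimensional $\F_q$-space. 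That is far stronger than transitivity on the $q^2+q+1$ points of $\mathbb{P}^2(\F_q)$, and Hering's theorem on transitive linear groups (quoted as \cite[Proposition 3.3]{TiepKleshchev10}) leaves only four families, three of which die instantly on the exponent conditions $q^{sa}=q^3$, $q^{2sa}=q^3$, $q^{6s}=q^3$, and the fourth ($\Gamma L_1(q^3)$) on the divisibility of $|\bar K|_{2'}$ by $(q-1)(q^3-1)$ forced by $\alpha_3(1)$. Your route trades Hering's theorem for an explicit Aschbacher analysis in $GL_3(q)$; it does go through, but three details need tightening. First, $|\stab_L(\lambda)|=2q^2(q^2-1)$, not $2q^2(q-1)(q^2-1)$ (harmless, since only $|\mathcal{O}_2^-|$ enters). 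Second, you must invoke Aschbacher's theorem in its form for \emph{arbitrary} subgroups — a subgroup of $GL_3(q)$ not containing $SL_3(q)$ lies in a member of $\mathcal{C}_1\cup\cdots\cup\mathcal{C}_8$ or is an almost quasisimple irreducible $\mathcal{S}$-group; knowing only that every \emph{maximal} overgroup of $\bar K$ contains $SL_3(q)$ does not yield $SL_3(q)\leq\bar K$. Third, ``direct order comparison'' does not finish every residual case: for $q=4$ one has $|\mathcal{O}_2^-|=378=2\cdot3^3\cdot7$, while the normalizers in $GL_3(4)$ of $3\cdot A_6$ (class $\mathcal{S}$) and of the extraspecial $\mathcal{C}_6$-subgroup both have order exceeding $378$; these must be eliminated by the divisibility $|\mathcal{O}_2^-|\mid|\bar K|$ (neither order is divisible by $7$), and you should also include $\mathcal{C}_2$ and $\mathcal{C}_6$ in your list of candidates. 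With those repairs your plan is a valid, if longer, substitute for the paper's argument.
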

\begin{proof}
Note that $HZ/Z\cong H/(Z\cap H)$ is a subgroup of $P_3/Z\cong GL_3(q)$.  As $\alpha_3(1)=q(q-1)(q^3-1)/2$, we know that $|H|_{2'}$ is divisible by $(q-1)(q^3-1)$.  Moreover, $HZ/Z$ must act transitively on the $q^3-1$ elements of $R\setminus0$.  Therefore, by \cite[Proposition 3.3]{TiepKleshchev10}, there is some power of $q$, say $q^s$, such that $M:=HZ/Z$ satisfies one of the following:
\begin{enumerate}
\item $M\rhd SL_a(q^{s})$ with $q^{sa}=q^3$ for some $a\geq 2$
\item $M\rhd Sp_{2a}(q^{s})'$ with $q^{2sa}=q^3$ for some $a\geq 2$
\item $M\rhd G_2(q^s)'$ with $q^{6s}=q^3$,
or
\item $M\cdot (Z(GL_3(q)))\leq \Gamma L_1(q^3)$.
\end{enumerate}
Now, the conditions that $q^{2as}=q^3$ or $q^{6s}=q^3$ imply that $H$ cannot satisfy (2) or (3).  As $(q-1)(q^3-1)$ must divide $|M|$, $H$ also cannot satisfy (4).  Hence, $H$ is as in (1).  But then the conditions $q^{as}=q^3$ and $q\geq 2$ imply that $a=3$ and $s=1$.  Therefore, $SL_3(q)\lhd M=HZ/Z$.
\end{proof}

\begin{lemma}\label{lem:invsubgpsofZ}
A nontrivial $SL_3(q)$-invariant proper subgroup of $Z$ must be $R$.
\end{lemma}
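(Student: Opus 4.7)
The plan is to show that the composition factors of $Z$ (as an $SL_3(q)$-module) are $R$ and $Z/R$, each $SL_3(q)$-transitive on nonzero elements, and then to rule out splitting of the extension $0 \to R \to Z \to Z/R \to 0$ as $\F_2 SL_3(q)$-modules.

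Identify $Z$ with the space of $3 \times 3$ symmetric matrices over $\F_q$ on which $X \in SL_3(q)$ acts by $C \mapsto X^T C X$, so that $R$ is the subspace of symmetric matrices with zero diagonal (in characteristic $2$, these are the alternating matrices). This is clearly $SL_3(q)$-stable, and $R \cong \wedge^2(\F_q^3)^\ast \cong \F_q^3$ as $SL_3(q)$-modules; hence $SL_3(q)$ acts transitively on $R \setminus \{0\}$. The quotient $Z/R \cong \F_q^3$ via the diagonal map $C \mapsto (C_{11}, C_{22}, C_{33})$, and a direct computation shows that $X$ acts on this $\F_q^3$ (as column vectors) via $D \mapsto Y^T D$ where $Y$ is obtained from $X$ by squaring each entry; this action is again transitive on nonzero elements. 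Consequently, for any nontrivial $SL_3(q)$-invariant subgroup $M \leq Z$, both $M \cap R \in \{0, R\}$ and the image of $M$ in $Z/R$ lies in $\{0, Z/R\}$. Excluding the possibilities $M = 0$ and $M = Z$ leaves either $M = R$ or $M$ an $\F_2 SL_3(q)$-complement to $R$, giving a splitting $Z = R \oplus M$.

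The main remaining step is to rule out such a complement. Supposing $Z = R \oplus W$, let $\phi: Z/R \to Z$ be the corresponding $\F_2$-linear $SL_3(q)$-equivariant section, and write $\phi(d) = \mathrm{diag}(d) + r(d)$ with $r: \F_q^3 \to R$ an $\F_2$-linear map. The equivariance of $\phi$ yields the cocycle identity
\[ r(Y^T d) + X^T r(d) X = s(X, d), \]
where $s(X, d) := X^T \mathrm{diag}(d) X - \mathrm{diag}(Y^T d)$ is the off-diagonal part of $X^T \mathrm{diag}(d) X$. Testing this on root subgroup elements yields a contradiction. First, $X = I + t e_{23}$ with $d = e_1$ gives $s = 0$ and forces the $(1,2)$-entry of $r(e_1)$ to vanish. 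Then $X = I + t e_{13}$ with $d = e_1$ gives $s(X, e_1) = t(e_{13} + e_{31})$ and, combined with the previous constraint, yields $r(e_3) = e_{13} + e_{31}$ on setting $t = 1$. Finally, $X = I + t e_{12}$ with $d = e_3$ gives $s = 0$ and forces the $(1,3)$-entry of $r(e_3)$ to be zero, contradicting $r(e_3) = e_{13} + e_{31}$.

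The main obstacle is executing the cocycle calculation carefully, keeping track of the Frobenius twist appearing in the action on $Z/R$ and ensuring that mere $\F_2$-linearity of $r$ (rather than $\F_q$-linearity) suffices for the contradiction. Once the non-splitness of the extension is established, the only proper nontrivial $SL_3(q)$-invariant subgroup of $Z$ is $R$, completing the proof.
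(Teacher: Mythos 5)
Your proof is correct, and its first half coincides with the paper's: identify the two $SL_3(q)$-chief factors $R\cong\Lambda^2(U)$ and $Z/R\cong U^{(2)}$, use transitivity on their nonzero elements to force $M\cap R\in\{0,R\}$ and $MR/R\in\{0,Z/R\}$, and reduce to excluding an invariant complement to $R$. Where you genuinely diverge is in that last exclusion. The paper argues at the level of elements: it takes the unique element $g$ of the putative complement $D$ with diagonal $(1,0,0)$, handles the subcase where $g$ is diagonal separately (then $D$ would be the full diagonal subgroup, which is not $SL_3(q)$-invariant), and otherwise exhibits an explicit $X\in SL_3(q)$ (either $\mathrm{diag}(1,s,s^{-1})$ or $I+re_{12}$, depending on which off-diagonal entries of $g$ vanish) that fixes the coset $g+R$ but moves $g$. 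You instead package the complement as an equivariant additive section $\phi(d)=\mathrm{diag}(d)+r(d)$ and derive a contradiction from the cocycle identity tested at three root elements; I checked the three computations and they are right ($r(e_1)_{12}=0$, then $r(e_3)=e_{13}+e_{31}$, then $r(e_3)_{13}=0$). The two finishes are close in substance --- your test elements play the same role as the paper's coset-stabilizing $X$ --- but your version is more uniform: it absorbs the diagonal-complement subcase (if $r\equiv 0$ your second test already gives $0=t(e_{13}+e_{31})$), and it isolates the one genuinely delicate point, namely that only $\F_2$-linearity of $r$ is available because of the Frobenius twist on $Z/R$, which you handle correctly by setting $t=1$ rather than pulling $t^2$ out of $r$. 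The trade-off is a somewhat longer computation than the paper's short case split.
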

\begin{proof}
Let $D< Z$ be nontrivial and invariant under the $SL_3(q)$-action, which is given by $XCX^T$ for $C\in Z$ and $X\in SL_3(q)$.  Note that here we have made the identifications $C\leftrightarrow
\left(
  \begin{array}{cc}
    I & C \\
    0 & I \\
  \end{array}
\right)$ and $X\leftrightarrow \left(
  \begin{array}{cc}
    X & 0\\
    0 & (X^{-1})^{T} \\
  \end{array}
\right)$.  Now, note that $SL_3(q)$ acts transitively on $R\setminus 0$, so $D\cap R$ must be either $R$ or $0$.  (Indeed, the action of $SL_3(q)$ on $R$ is the second wedge $\Lambda^2(U)\simeq U^\ast$ of the action on the natural module $U$ for $SL_3(q)$.)  Moreover, $SL_3(q)$ acts transitively on $(Z/R)\setminus 0$, so either $DR/R=Z/R$ or $DR=R$.   (Indeed, the action of $SL_3(q)$ on $Z/R$ is the Frobenius twist $U^{(2)}$ of the action of $SL_3(q)$ on the natural module $U$.)  If $R<D$, then $D/R=Z/R$, so $D=Z$, a contradiction.  Hence either $D=R$ or $D\cap R=0$.

If $D\cap R=0$, then $DR\neq R$, so $DR=Z$ and $D$ is a complement in $Z$ for $R$.  Hence no two elements of $D$ can have the same diagonal.  Let $g=\left(
                                                      \begin{array}{ccc}
                                                        1 & a & b \\
                                                        a & 0 & c \\
                                                        b & c & 0 \\
                                                      \end{array}
                                                    \right)$ be the element in $D$ with diagonal $(1,0,0)$, which must exist since $SL_3(q)$ acts transitively on nonzero elements of $DR/R=Z/R$.  If $g$ is diagonal, then any matrix of the form $\mathrm{diag}(a, 0, 0)$, $\mathrm{diag}(0, a, 0)$, or $\mathrm{diag}(0, 0, a)$ for $a\neq 0$ is in the orbit of $g$.  Thus since $D$ is an $SL_3(q)$-invariant subgroup, $D$ contains the group of all diagonal matrices.
As $D$ is a complement for $R$, it follows that in fact $D$ is the group of diagonal matrices, a contradiction since this group is not $SL_3(q)$-invariant.  Therefore, $g$ has nonzero nondiagonal entries. We claim that there is some $X\in SL_3(q)$ which stabilizes the coset $g+R$ but does not stabilize $g$. That is, $g$ and $XgX^T$ have the same diagonal, but are not the same element, yielding a contradiction.  Indeed, if at least one of $a, b$ is nonzero, then any $X= \mathrm{diag}(1, s, s^{-1})$ with $s \neq 1$ satisfies the claim.  If $a=b=0$ and $c\neq 0$, we can take $X$ to be $\left(
                                                      \begin{array}{ccc}
                                                        1 & r & 0 \\
                                                        0 & 1 & 0 \\
                                                        0 & 0 & 1 \\
                                                      \end{array}
                                                    \right)$ with $r\neq 0$, proving the claim.  We have therefore shown that $D=R$.
\end{proof}

\begin{theorem}
Let $G=Sp_6(q)$ with $q\geq 4$ even, and let $V$ be an absolutely irreducible $G$-module $V$ which affords the Brauer character $\wh{\alpha}_3$.  Then $V|_H$ is irreducible for some $H<P_3$ if and only if $H$ contains $P_3'=Z:SL_3(q)$.
\end{theorem}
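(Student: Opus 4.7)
The forward implication is immediate from Lemma \ref{lem:alphairredZSL}. For the converse, assume $V|_H$ is irreducible. By Lemma \ref{lem:subgpcontainsSL} we already have $HZ\supseteq P_3'$, so it suffices to prove $Z\subseteq H$, whence $H=HZ\supseteq P_3'$. Set $K:=H\cap Z$. Then $K$ is normal in $H$, and the conjugation action of $H/K\cong HZ/Z\supseteq SL_3(q)$ on $K$ agrees with the natural $L_3$-action, so $K$ is $SL_3(q)$-invariant. By Lemma \ref{lem:invsubgpsofZ}, $K\in\{1,R,Z\}$; the case $K=Z$ yields the conclusion, so the task reduces to ruling out $K=1$ and $K=R$.

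If $K=1$, then the projection $P_3\to P_3/Z=L_3$ restricts to an embedding $H\hookrightarrow L_3\cong GL_3(q)$, and the scalars of $GL_3(q)$ contained in $H$ are central in $H$, so $H/Z(H)$ embeds in $\mathrm{PGL}_3(q)$. A direct calculation shows
\[(\dim V)^2=\tfrac14 q^2(q-1)^4(q^2+q+1)^2>q^3(q-1)^2(q+1)(q^2+q+1)=|\mathrm{PGL}_3(q)|\]
for every $q\geq 4$, contradicting Lemma \ref{lem:maxmindegrees}.

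For the case $K=R$, transitivity of $SL_3(q)$ on $\hat R\setminus\{1\}$ combined with the $L_3$-equivariant surjection $\mathcal{O}_2^-\to\hat R\setminus\{1\}$ of constant fiber size $|\mathcal{O}_2^-|/(q^3-1)=q(q-1)/2$ yields, via the proof of Lemma \ref{lem:alphairredZSL}, the decomposition $V|_R=(q(q-1)/2)\sum_{1\neq\mu\in\hat R}\mu$. Clifford theory on $R\lhd H$ then forces the $\mu$-isotypic component $V^\mu$ to be an irreducible representation of $J:=\operatorname{stab}_H(\mu)$ of dimension $q(q-1)/2$, with $R$ acting by the scalar $\mu$; hence $V^\mu$ descends to a module for $\bar J:=J/\ker\mu$, a central extension of $J/R$ by $\mathbb{Z}/2\cong R/\ker\mu$. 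Since scalars in $Z(L_3)\cong\mathbb{F}_q^\times$ act on $R\cong\Lambda^2(U)^\ast$ by squaring and only the trivial scalar fixes $\mu$, $J/R=\operatorname{stab}_{HZ/Z}(\mu)=[q^2]{:}(SL_2(q)\times D')$ for the subgroup $D'\leq\mathbb{F}_q^\times$ corresponding to $HZ/Z=\{g\in GL_3(q):\det g\in D'\}$. Let $N\lhd\bar J$ denote the preimage of $[q^2]\lhd J/R$. If $V^\mu|_N$ contains a character nontrivial on $[q^2]$, then the transitivity of $SL_2(q)$ on the $q^2-1$ nontrivial characters of $[q^2]$ forces $(q^2-1)\mid\dim V^\mu=q(q-1)/2$, which is impossible. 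Thus $[q^2]$ acts trivially on $V^\mu$, so $V^\mu$ is an irreducible representation of $\bar J/[q^2]$ of dimension $q(q-1)/2$ on which the central $\mathbb{Z}/2$ acts nontrivially. Standard cohomological vanishing results (in particular $H^1(SL_n(q),M)=0$ for $M$ a Frobenius twist of the natural $SL_n(q)$-module when $q\geq 4$) permit a conjugation of $H$ by an element of $Z$ making $SL_2(q)\leq H$, whence $SL_2(q)\hookrightarrow\bar J$ and $\bar J/[q^2]\cong\mathbb{Z}/2\times SL_2(q)\times D'$ splits as a central extension. Irreducible representations of this group on which the central $\mathbb{Z}/2$ acts nontrivially have dimension at most $\mathfrak{m}(SL_2(q)\times D')=q+1<q(q-1)/2$ for $q\geq 4$, the desired contradiction.

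The principal difficulty lies in the case $K=R$: Lemma \ref{lem:maxmindegrees} is not sharp enough here, and the Clifford-theoretic argument must simultaneously exclude both possibilities for $V^\mu|_N$ while contending with the potential nonsplitness of the central extension $\bar J/[q^2]$ — especially at $q=4$, where $SL_2(4)=A_5$ admits a projective $6$-dimensional representation matching $q(q-1)/2=6$ exactly. The cohomological vanishing securing the splitting of $\bar J/[q^2]$, together with the divisibility obstruction from the action of $[q^2]$, is precisely what delivers the contradiction.
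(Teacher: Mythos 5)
Your proof is correct and follows essentially the same route as the paper: the same reduction via Lemmas \ref{lem:alphairredZSL}, \ref{lem:subgpcontainsSL}, and \ref{lem:invsubgpsofZ} to the cases $H\cap Z\in\{1,R\}$, and in the case $H\cap Z=R$ the same Clifford-theoretic descent to $\stab_H(\mu)$, the same use of the transitivity of $SL_2(q)$ on $\irr([q^2])\setminus\{1\}$, and the same appeal to $H^1(SL_3(q),\cdot)=0$. You differ in two localized places. For $H\cap Z=1$ you use the order bound $|H/Z(H)|\leq|\mathrm{PGL}_3(q)|<(\dim V)^2$ in place of the paper's Clifford argument forcing $SL_3(q)$ to have a character of degree divisible by $q(q^3-1)/2$; your version works, but note the passage to $\mathrm{PGL}_3$ rather than $GL_3$ is genuinely needed at $q=4$, and strictly $H/Z(H)$ is a quotient of a subgroup of $\mathrm{PGL}_3(q)$, which is all the inequality requires. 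In the endgame of the case $H\cap Z=R$ you keep track of $\dim V^\mu=q(q-1)/2$ exactly over the full stabilizer $\bar J$ and conclude via $\mathfrak{m}(SL_2(q)\times D')=q+1<q(q-1)/2$, whereas the paper restricts first to $H\cap P_3'$, obtains constituents of degree $q(q-1)/(2d)$, and must instead argue by parity (the only even degree of $SL_2(q)$ is $q$); your variant is slightly cleaner. One caveat: you state the dichotomy for $V^\mu|_N$ before establishing the splitting, but the orbit argument on $\irr(N)$ lying over $\mu$ really requires $N\cong\Z/2\times[q^2]$, which is only available after the $H^1$-conjugation making $SL_3(q)$ (hence $[q^2]{:}SL_2(q)$) an honest complement in $H$; reordering those two steps, as the paper does, closes this gap, and your explicit attention to the $q=4$, $2\cdot A_5$ subtlety shows you have the right mechanism in hand.
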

\begin{proof}
First, if $H$ contains $P_3'$, then $V|_H$ is irreducible by \prettyref{lem:alphairredZSL}. Conversely, suppose that $V|_H$ is irreducible for some $H<P_3$.  Assume by way of contradiction that $H$ does not contain $P_3'$.  By \prettyref{lem:subgpcontainsSL}, $HZ$ contains $P_3'$, so $H\cap Z$ is $SL_3(q)$-invariant.  Therefore, by \prettyref{lem:invsubgpsofZ}, $H\cap Z$ must be $1, R,$ or $Z$.  Since $H$ does not contain $P_3'$, it follows that $H\cap Z=1$ or $R$.

Write $H_1:=H\cap P_3'$.  Then $H_1Z=P_3'$.  (Indeed, $P_3'\leq ZH$, so any $g\in P_3'$ can be written as $g=zh$ with $z\in Z, h\in H$.  Hence $z^{-1}g=h\in H\cap P_3'=H_1$, and $g\in H_1Z$.  On the other hand, $H_1Z\leq P_3'Z=P_3'$.)

Now, if $H\cap Z=1$, then $H_1\cap Z=1$ and $H_1\cong P_3'/Z=SL_3(q)$.  Since $V|_H$ is irreducible and $H/H_1$ is cyclic of order $q-1$, we see by Clifford theory that $H_1\cong SL_3(q)$ has an irreducible  character of degree $\alpha_3(1)/d$ for some $d$ dividing $q-1$.  Then $SL_3(q)$ has an irreducible character degree divisible by $q(q^3-1)/2$, a contradiction, as $\mathfrak{m}(SL_3(q))<q(q^3-1)/2$.

Hence we have $H\cap Z = R$.  Then $H_1\cap Z=R$ as well, so $(H_1/R)\cap (Z/R)=1$, and $H_1/R$ is a complement for $(Z/R)=[q^3]$ in $P_3'/R\cong[q^3]:SL_3(q)$. As the first cohomology group $H^1(SL_3(q), \F_q^3)$ is trivial (see, for example \cite[Table 4.5]{clineparshallscott}), any complement for $Z/R$ in $P_3'/R$ is conjugate in $P_3'/R$ to $H_1/R$.  In particular, writing $K_1:=R:L_3'= R:SL_3(q)$, we see that $K_1/R$ is also a complement for $Z/R$ in $P_3'/R$.  Hence, $K_1/R$ is conjugate to $H_1/R$ in $P_3'/R$, so $K_1$ is conjugate to $H_1$ in $P_3'$, and we may assume for the remainder of the proof that $H_1=R:L_3'=R:SL_3(q)$.

As $H/H_1$ is cyclic of order dividing $q-1$, we know by Clifford theory that if $\alpha_3|_H$ is irreducible, then there is some $d|(q-1)$ so that each irreducible constituent of $\alpha_3|_{H_1}$ has degree $\alpha_3(1)/d$.  Let $\beta\in\irr(H_1)$ be one such constituent, and let $\mu$ be a constituent of $\beta|_R$.  Then since $L_3'=SL_3(q)$ acts transitively on $\irr(R)\setminus\{1_R\}$, $I_{H_1}(\mu):=\stab_{H_1}(\mu)=R:([q^2]:SL_2(q))$. By Clifford theory, we can write $\beta|_{H_1}=\psi^{H_1}$ for some $\psi\in\irr(I_{H_1}(\mu)|\mu)$.  Hence $\beta(1)=[H:I_H(\mu)]\cdot\psi(1)=(q^3-1)\psi(1)$.

We can view $\psi$ as a character of $I_H(\mu)/\ker\mu$, as $\psi|_R=e\cdot\mu$ for some integer $e$.  But $I_H(\mu)/\ker\mu\cong C_2\times \left([q^2]:SL_2(q)\right)$, as $R$ is elementary abelian and $\mu$ is nontrivial.  If $\psi$ is nontrivial on $[q^2]$, then $\psi|_{[q^2]}$ is some integer times an orbit sum for some $SL_2(q)$-orbit of characters of $[q^2]$, again by Clifford theory.  However, as $SL_2(q)$ is transitive on $[q^2]\setminus0$, it follows that $\psi(1)$ is divisible by $q^2-1$, a contradiction since $\beta(1)$ is not divisible by $q^2-1$.

Hence $\psi$ is trivial on $[q^2]$, so $\psi$ can be viewed as a character of $C_2\times SL_2(q)$.  As $q\geq 4$, $\psi(1)=q(q-1)/2d$ is even. Now, the only even irreducible character degree of $SL_2(q)$ is $q$, but $q\neq q(q-1)/2d$, which contradicts the existence of this $\beta$.  Therefore, $\alpha_3|_H$ cannot be irreducible, so neither is $\wh{\alpha}_3|_H$.
\end{proof}

We have now completed the proof of \prettyref{thm:mainresult}.

\section{The case $q=2$}\label{sec:caseqis2}
In this section, we prove Theorems \ref{thm:mainresultSp4(2)} and \ref{thm:mainresultSp6(2)}.  To do this, we use the computer algebra system GAP, \cite{GAP4}.  In particular, we utilize the character table library \cite{GAPctlib}, in which the ordinary and Brauer character tables for $Sp_6(2)$ and $S_4(2)\cong S_6$, along with all of their maximal subgroups, are stored.  The maximal subgroups of $Sp_6(2)$ are as follows:
 \[U_4(2).2, \quad A_8.2, \quad 2^5:S_6, \quad U_3(3).2, \quad 2^6:L_3(2),\quad 2.[2^6]:(S_3\times S_3), \quad S_3\times S_6, \quad L_2(8).3,\] and the maximal subgroups of $Sp_4(2)\cong S_6$ are
 \[ A_6,\quad A_5.2=S_5, \quad O_4^-(2)\cong S_5,\quad S_3\wr S_2,\quad 2\times S_4, \quad S_2\wr S_3\]
The ordinary and Brauer character tables for each of these maximal subgroups are stored as well, with the exception of $2^5:S_6$ and $2^6:L_3(2)$, for which we only have the ordinary character tables.  In addition, the command \verb+PossibleClassFusions(c1,c2)+ gives all possible fusions from the group whose (Brauer) character table is \verb+c1+ and the group whose (Brauer) character table is \verb+c2+.  Using this command, it is straightforward to find all Brauer characters which restrict irreducibly from \verb+c2+ to \verb+c1+.

In the case $H=P_3=2^6:L_3(2)$ or $2^5: S_6$ and $G=Sp_6(2)$, we need additional techniques, as the Brauer character tables for these choices of $H$ are not stored in the GAP character table library.  However, in the case $H=2^5: S_6$, the above technique shows that there are no ordinary irreducible characters of $G$ which restrict irreducibly to $H$, and moreover, there is no $\chi|_H-\lambda$ for $\chi\in\irr(G), \lambda\in\wh{H}$ which is irreducible on $H$.  Observing that any $\varphi\in\ibr_\ell(G)$ with $\varphi(1)\leq \mathfrak{m}(H)$ either lifts to a complex character or is $\wh{\chi}-1$ for some complex character $\chi$, we see by \prettyref{lem:Lemma1forO6} that there are no irreducible Brauer characters of $G$ which restrict to an irreducible Brauer character of $H$, for any choice of $\ell\neq 2$.

We are therefore reduced to the case $H=P_3$.  In this case, it is clear from our above techniques that the only ordinary characters which restrict irreducibly to $H$ are $\alpha_3$ and $\chi_4$, where $\chi_4$ is the irreducible character of degree $21$ which is not $\zeta_3^1$.  Moreover, there is again no $\chi\in\irr(G)$, $\lambda\in\wh{H}$ such that $\chi|_H-\lambda\in\irr(H)$.  Referring to the notation of \prettyref{sec:resttoparabolics}, we have $|\mathcal{O}_1|=7$, $|\mathcal{O}_2^-|=7, |\mathcal{O}_2^+|=14,$ and $|\mathcal{O}_3|=28$.  Hence, any $\varphi\in\ibr_\ell(H)$ must satisfy $7|\varphi(1)$.  We can see from the Brauer character table of $G$ that if $\chi\in\ibr_\ell(G)$ has $\chi(1)\leq \mathfrak{m}(H)$, then either $\chi$ or $\chi+1$ lifts to $\C$.  Thus by \prettyref{lem:Lemma1forO6}, the only possibilities are $\wh{\alpha}_3$ and $\wh{\chi_4}$.  Now $\wh{\alpha}_3|_H$ is irreducible since $\alpha_3(1)=|\mathcal{O}_1|=|\mathcal{O}_2^-|$, and these are the smallest orbits of characters in $Z_3$.  Therefore it remains only to show that $\wh{\chi}_4$ is indeed also irreducible on $H$.

Since we know that $\chi_4|_H\in\irr(H),$ we know that $\chi_4|_{Z_3}$ must contain only one orbit of $Z_3$-characters as constituents, which means that $\chi_4|_{Z_3}=3\omega_1$ or $3\omega_2^-$, continuing with the notation of \prettyref{sec:resttoparabolics}.  Since $Z_3$ consists of $2$-elements, we know $\wh{\chi_4}|_{Z_3}$ can be written in the same way.  Moreover, since $q=2$, $\stab_{L_3}(\lambda)$ is solvable for $\lambda\neq 1$, so we know that if $\lambda$ is a constituent of $\chi_4|_{Z_3}$, then any $\psi\in\ibr_\ell(I|\lambda)$ lifts to an ordinary character.  Since by Clifford theory, any irreducible constituent of $\wh{\chi}_4|_H$ can be written $\psi^H$ for such a $\psi$, it follows that if $\wh{\chi}_4|_H$ is reducible, then it can be written as the sum of some $\wh{\varphi}_i$ for $\varphi_i\in\irr(H|\lambda)$.  In particular, each of these $\varphi_i$ must have degree $7$ or $14$.  By inspection of the columns of the ordinary character table of $H$ corresponding to $3$-regular and $7$-regular classes, it is clear that $\chi_4|_H$ cannot be written as such a sum on $\ell$-regular elements, and therefore $\wh{\chi}_4|_H$ is irreducible.

\section{Acknowledgments}
This work was supported in part by a graduate fellowship from the National Physical Science Consortium and by the NSF (grant DMS-0901241) and was completed as part of my Ph.D. study at the University of Arizona.  I would like to thank my advisor, Professor P.H. Tiep, for all of his support, guidance, and helpful discussions.  I would also like to thank Professor K. Lux for numerous helpful discussions and for introducing me to computations with GAP, and Professor F. Himstedt of Technische Universit{\"a}t M{\"u}nchen for helping me get started with CHEVIE.

\appendix
\section{The Brauer Characters for $Sp_6(2^a)$ Lying in Unipotent Blocks}\label{app:BrauerCharTable}

Tables \ref{tab:unipblockslnot3divq-1} through \ref{tab:unipblocksldivq^2+1} list the degrees and descriptions in terms of ordinary characters of the irreducible Brauer characters of $G=Sp_6(q)$, $q$ even, that lie in unipotent blocks for the various possibilities of $\ell||G|$, which can be extracted from \cite{white2000}.  We use the notation $\phi_i$ for the $i$th cyclotomic polynomial.  Also, $\alpha=2$ if $(q+1)_{\ell}\neq 3$ and is 1 if $(q+1)_{\ell}=3$.  The unknowns $\beta_i$, $i=2, 3$ satisfy $1\leq \beta_2\leq q/2+1,$ and $1\leq\beta_3\leq q/2$ (see \cite{white2000}).  Moreover, from \cite{white2000}, the unknown $\beta_1$ is either $0$ or $1$.  However, the results of \cite{TiepGuralnick04} yield that in fact $\beta_1=1$.

\begin{table}[ht]
\centering
\caption{$\ell-$Brauer Characters in Unipotent Blocks of $G=Sp_6(q)$, $\ell|(q-1)$, $\ell\neq 3$}\label{tab:unipblockslnot3divq-1}
\subfloat[][Principal Block $b_0$]{
\begin{tabular}{|c|c|}
  \hline
  $\varphi\in\ibr(G)\cap b_0$ & Degree, $\varphi(1)$\\
  \hline
  $\widehat{\chi}_1$ & 1\\
  $\widehat{\chi}_2$ & $\frac{1}{2}q(q^2+q+1)(q^2+1)$\\
  $\widehat{\chi}_3$ & $\frac{1}{2}q(q^2-q+1)(q^2+1)$\\
  $\widehat{\chi}_4$ & $\frac{1}{2}q(q^2-q+1)(q+1)^2$\\
  $\widehat{\chi}_6$ & $q^2(q^4+q^2+1)$\\
  $\widehat{\chi}_7$ & $q^3(q^4+q^2+1)$\\
  $\widehat{\chi}_8$ & $\frac{1}{2}q^4(q^2+q+1)(q^2+1)$\\
  $\widehat{\chi}_9$ & $\frac{1}{2}q^4(q^2-q+1)(q+1)^2$\\
  $\widehat{\chi}_{10}$ & $\frac{1}{2}q^4(q^2-q+1)(q^2+1)$\\
  $\widehat{\chi}_{12}$ & $q^9$\\
  \hline
 \end{tabular}}
\subfloat[][Block $b_1$]{
\begin{tabular}{|c||c|}
  \hline
  $\varphi\in\ibr(G)\cap b_1$ & Degree, $\varphi(1)$\\
  \hline
  $\widehat{\chi}_5$ & $\frac{1}{2}q(q-1)^2(q^2+q+1)$\\
  $\widehat{\chi}_{11}$ & $\frac{1}{2}q^4(q-1)^2(q^2+q+1)$\\
  \hline
  \end{tabular}}
\end{table}

\begin{table}[ht]
\centering
\caption{$\ell-$Brauer Characters in Unipotent Blocks of $G=Sp_6(q)$, $\ell=3|(q-1)$}\label{tab:unipblocksl3divq-1}
\subfloat[][Principal Block $b_0$]{
\begin{tabular}{|c|c|}
\hline
  $\varphi\in\ibr(G)\cap b_0$ & Degree, $\varphi(1)$\\
  \hline
  $\widehat{\chi}_1$ & 1\\
  $\widehat{\chi}_2$ & $\frac{1}{2}q(q^2+q+1)(q^2+1)$\\
  $\widehat{\chi}_3$ & $\frac{1}{2}q(q^2-q+1)(q^2+1)$\\
  $\widehat{\chi}_4-\widehat{\chi}_1$ & $\frac{1}{2}q(q^2-q+1)(q+1)^2-1$\\
  $\widehat{\chi}_6$ & $q^2(q^4+q^2+1)$\\
  $\widehat{\chi}_7$ & $q^3(q^4+q^2+1)$\\
  $\widehat{\chi}_8$ & $\frac{1}{2}q^4(q^2+q+1)(q^2+1)$\\
  $\widehat{\chi}_9-\widehat{\chi}_3$ & $\frac{1}{2}q^4(q^2-q+1)(q+1)^2-\frac{1}{2}q(q^2-q+1)(q^2+1)$\\
  $\widehat{\chi}_{10}-\widehat{\chi}_4+\widehat{\chi}_1$ & $\frac{1}{2}q^4(q^2-q+1)(q^2+1)-\frac{1}{2}q(q^2-q+1)(q+1)^2+1$\\
  $\widehat{\chi}_{12}-\widehat{\chi}_9+\widehat{\chi}_3$ & $q^9-\frac{1}{2}q^4(q^2-q+1)(q+1)^2+\frac{1}{2}q(q^2-q+1)(q^2+1)$\\
  \hline
  \end{tabular}}\qquad
\subfloat[][Block $b_1$]{
\begin{tabular}{|c|c|}
\hline
  $\varphi\in\ibr(G)\cap b_1$ & Degree, $\varphi(1)$\\
  \hline
  $\widehat{\chi}_5$ & $\frac{1}{2}q(q-1)^2(q^2+q+1)$\\
  $\widehat{\chi}_{11}$ & $\frac{1}{2}q^4(q-1)^2(q^2+q+1)$\\
  \hline
  \end{tabular}}
\end{table}

\begin{table}[ht]
\centering
\caption{$\ell-$Brauer Characters in Unipotent Blocks of $G=Sp_6(q)$, $\ell|(q+1)$}\label{tab:unipblocksldivq+1}
\subfloat[][Principal Block $b_0$]{
\begin{tabular}{|l|c|}
\hline
  $\varphi\in\ibr(G)\cap b_0$ & Degree, $\varphi(1)$\\
  \hline
  $\varphi_1=\wh{\chi}_1$ & 1\\
  $\varphi_2=\wh{\chi}_2-\beta_1\wh{\chi}_1$ & $\frac{1}{2}q(q^2+q+1)(q^2+1)-\beta_1$\\
  $\varphi_3=\wh{\chi}_3-\wh{\chi}_1$ & $\frac{1}{2}q(q^2-q+1)(q^2+1)-1$\\
  $\varphi_4=\wh{\chi}_5$ & $\frac{1}{2}q(q^2+q+1)(q-1)^2$\\
  $\varphi_5=\wh{\chi}_{28}$ & $(q^2+q+1)(q-1)^2(q^2+1)$\\
  $\quad =\wh{\chi}_6-\wh{\chi}_3-\wh{\chi}_2+\wh{\chi}_1$ & \\
  $\varphi_6=\wh{\chi}_{35}-\wh{\chi}_{5}$ & $\phi_1\phi_3(\phi_4\phi_6-\frac{1}{2}q\phi_1)$\\
  $\quad=\wh{\chi}_7-\wh{\chi}_6+\wh{\chi}_3-\wh{\chi}_1$& \\
  $\varphi_7=\wh{\chi}_{22}-(\alpha-1)\wh{\chi}_5-\wh{\chi}_3+\wh{\chi}_1$ & $\frac{1}{2}q\phi_1\phi_3\phi_4\phi_6-\frac{\alpha-1}{2}q\phi_1^2\phi_3-\frac{1}{2}q\phi_4\phi_6+1$\\
  $\quad=\wh{\chi}_8-\wh{\chi}_7-\alpha\wh{\chi}_5-\wh{\chi}_3+\wh{\chi}_1$ & \\
  $\varphi_8=\wh{\chi}_{23}$ & $\frac{1}{2}q\phi_1^3\phi_3\phi_6$\\
  $\quad = \wh{\chi}_{10}-\wh{\chi}_7+\wh{\chi}_6-\wh{\chi}_3$ & \\
  $\varphi_9=\wh{\chi}_{11}-\wh{\chi}_5$ & $\frac{1}{2}q\phi_1^3\phi_3^2$\\
  $\varphi_{10}=\wh{\chi}_{30}-\beta_3(\wh{\chi}_{11}-\wh{\chi}_5)-(\beta_2-1)\wh{\chi}_{23}-\wh{\chi}_{28}$ & $\phi_1^2\phi_3(q^3\phi_4-\frac{\beta_3}{2}q^4+ \frac{\beta_3}{2}q-\phi_4-\frac{\beta_2-1}{2}q\phi_1\phi_6) $\\
  \hline
\end{tabular}}\qquad
\subfloat[][Block $b_1$]{
\begin{tabular}{|l|c|}
  \hline
  $\varphi\in\ibr(G)\cap b_1$ & Degree, $\varphi(1)$\\
  \hline
  $\widehat{\chi}_4$ & $\frac{1}{2}q(q+1)^2(q^2-q+1)$\\
  $\widehat{\chi}_9-\widehat{\chi}_4$ & $\frac{1}{2}q(q+1)^2(q^2-q+1)(q^3-1)$\\
  \hline
\end{tabular}}
\end{table}

\begin{table}[ht]
\centering
\caption{$\ell-$Brauer Characters in Unipotent Blocks of $G=Sp_6(q)$, $\ell|(q^2-q+1)$, $\ell\neq 3$}\label{tab:unipblocksldivq^2-q+1}
\subfloat[][Principal Block $b_0$]{
\begin{tabular}{|c|c|}
  \hline
$\varphi\in\ibr(G)\cap b_0$ & Degree, $\varphi(1)$ \\
    \hline
    $\widehat{\chi}_1$ & 1 \\
    $\widehat{\chi}_2-\widehat{\chi}_1$ & $\frac{1}{2}q(q^2+q+1)(q^2+1)-1$ \\
    $\widehat{\chi}_8-\widehat{\chi}_2+\widehat{\chi}_1$ & $\frac{1}{2}q^4(q^2+q+1)(q^2+1)-\frac{1}{2}q(q^2+q+1)(q^2+1)+1$\\
    $\widehat{\chi}_{12}-\widehat{\chi}_8+\widehat{\chi}_2-\widehat{\chi}_1$ & $q^9-\frac{1}{2}q^4(q^2+q+1)(q^2+1)+\frac{1}{2}q(q^2+q+1)(q^2+1)-1$\\
    $\widehat{\chi}_5$ & $\frac{1}{2}q(q-1)^2(q^2+q+1)$\\
    $\widehat{\chi}_{11}-\widehat{\chi}_5$ & $\frac{1}{2}q^4(q-1)^2(q^2+q+1)-\frac{1}{2}q(q-1)^2(q^2+q+1)$\\
\hline
\end{tabular}}\qquad
\subfloat[][Blocks of Defect $0$ ]{
\begin{tabular}{|c|c|}
  \hline
$\varphi\in\ibr(G)$ & Degree, $\varphi(1)$ \\
\hline
$\widehat{\chi}_3$ & $\frac{1}{2}q(q^2-q+1)(q^2+1)$\\
$\widehat{\chi}_4$ & $\frac{1}{2}q(q^2-q+1)(q+1)^2$\\
$\widehat{\chi}_6$ & $q^2(q^4+q^2+1)$\\
$\widehat{\chi}_7$ & $q^3(q^4+q^2+1)$\\
$\widehat{\chi}_9$ & $\frac{1}{2}q^4(q^2-q+1)(q+1)^2$\\
$\widehat{\chi}_{10}$ & $\frac{1}{2}q^4(q^2-q+1)(q^2+1)$\\
\hline
\end{tabular}}
\end{table}

\begin{table}[ht]
  \centering
  \caption{$\ell-$Brauer Characters in Unipotent Blocks of $G=Sp_6(q)$, $\ell|(q^2+q+1)$, $\ell\neq 3$}\label{tab:unipblocksldivq^2+q+1}
\subfloat[][Principal Block $b_0$]{
  \begin{tabular}{|c|c|}
  \hline
    $\varphi\in\ibr(G)\cap b_0$ & Degree, $\varphi(1)$ \\
    \hline
    $\widehat{\chi}_1$ & 1 \\
    $\widehat{\chi}_4-\widehat{\chi}_1$ & $\frac{1}{2}q(q+1)^2(q^2-q+1)-1$\\
    $\widehat{\chi}_{10}-\widehat{\chi}_4+\widehat{\chi}_1$ & $\frac{1}{2}q^4(q^2+1)(q^2-q+1)-\frac{1}{2}q(q+1)^2(q^2-q+1)+1$ \\
    $\widehat{\chi}_3$ & $\frac{1}{2}q(q^2+1)(q^2-q+1)$\\
    $\widehat{\chi}_9-\widehat{\chi}_3$ & $\frac{1}{2}q^4(q+1)^2(q^2-q+1)-\frac{1}{2}q(q^2+1)(q^2-q+1)$\\
    $\widehat{\chi}_{12}-\widehat{\chi}_9+\widehat{\chi}_3$ & $q^9-\frac{1}{2}q^4(q+1)^2(q^2-q+1)+\frac{1}{2}q(q^2+1)(q^2-q+1)$\\
    \hline
  \end{tabular}}\qquad
\subfloat[][Blocks of Defect $0$]{
\begin{tabular}{|c|c|}
\hline
$\varphi\in\ibr(G)$ & Degree, $\varphi(1)$ \\
\hline
$\widehat{\chi}_2$ & $\frac{1}{2}q(q^2+q+1)(q^2+1)$\\
$\widehat{\chi}_5$ & $\frac{1}{2}q(q-1)^2(q^2+q+1)$\\
$\widehat{\chi}_6$ & $q^2(q^4+q^2+1)$\\
  $\widehat{\chi}_7$ & $q^3(q^4+q^2+1)$\\
  $\widehat{\chi}_8$ & $\frac{1}{2}q^4(q^2+q+1)(q^2+1)$\\
$\widehat{\chi}_{11}$ &$\frac{1}{2}q^4(q-1)^2(q^2+q+1)$ \\
\hline
  \end{tabular}}
\end{table}

\begin{table}[ht]
\centering
\caption{$\ell-$Brauer Characters in Unipotent Blocks of $G=Sp_6(q)$, $\ell|(q^2+1)$}\label{tab:unipblocksldivq^2+1}
\subfloat[][Principal Block $b_0$]{
\begin{tabular}{|l|c|}
\hline
   $\varphi\in\ibr(G)\cap b_0$ & Degree, $\varphi(1)$\\
  \hline
  $\wh{\chi}_1$ & 1\\
  $\wh{\chi}_6-\wh{\chi}_1$ & $q^2(q^4+q^2+1)-1$\\
  $\wh{\chi}_9-\wh{\chi}_6+\wh{\chi}_1$ & $\frac{1}{2}q^4(q+1)^2(q^2-q+1)-q^2(q^4+q^2+1)+1$\\
  $\wh{\chi}_{11}$ & $\frac{1}{2}q^4(q-1)^2(q^2+q+1)$\\
  \hline
\end{tabular}}\qquad
\subfloat[][Block $b_1$]{
\begin{tabular}{|l|c|}
 \hline
  $\varphi\in\ibr(G)\cap b_1$ & Degree, $\varphi(1)$\\
  \hline
  $\widehat{\chi}_4$ & $\frac{1}{2}q(q+1)^2(q^2-q+1)$\\
  $\widehat{\chi}_7-\widehat{\chi}_4$ & $q^3(q^4+q^2+1)-\frac{1}{2}q(q+1)^2(q^2-q+1)$\\
  $\wh{\chi}_{12}-\wh{\chi}_7+\wh{\chi}_4$ & $q^9-q^3(q^4+q^2+1)+\frac{1}{2}q(q+1)^2(q^2-q+1)$\\
  $\wh{\chi}_5$ & $\frac{1}{2}q(q-1)^2(q^2+q+1)$\\
  \hline
\end{tabular}}\qquad
\subfloat[][Blocks of Defect $0$]{
\begin{tabular}{|l|c|}
\hline
$\varphi\in\ibr(G)$ & Degree, $\varphi(1)$ \\
\hline
$\widehat{\chi}_2$ & $\frac{1}{2}q(q^2+q+1)(q^2+1)$\\
$\widehat{\chi}_3$ & $\frac{1}{2}q(q^2+1)(q^2-q+1)$\\
$\widehat{\chi}_8$ & $\frac{1}{2}q^4(q^2+q+1)(q^2+1)$\\
$\widehat{\chi}_{10}$ & $\frac{1}{2}q^4(q^2+1)(q^2-q+1)$\\
\hline
\end{tabular}}
\end{table} 

\section{Notations of Characters in $Sp_6(q)$ and $G_2(q)$}\label{app:notationschars}
\begin{table}
\centering
\caption{Notation of Characters of $Sp_6(q)$}
\label{tab:notationSp}
\begin{tabular}{|c|c|c|c|}
\hline
Degree &Guralnick-Tiep \cite{TiepGuralnick04} & Luebeck \cite{Luebeckthesis} & D. White \cite{white2000} \\
\hline
$\frac{(q^3+1)(q^3-q)}{2(q-1)}$&$\rho_3^1$&$\chi_{1,4}$&$\chi_4$\\
\hline
$\frac{(q^3-1)(q^3+q)}{2(q-1)}$&$\rho_3^2$&$\chi_{1,2}$&$\chi_2$\\
\hline
$\frac{q^6-1}{q-1}$&$\tau_3^i $& &Type $\chi_{13}$\\
\hline
$\frac{(q^3-1)(q^3-q)}{2(q+1)}$&$\alpha_3$& $\chi_{1,5}$&$\chi_5$\\
\hline
$\frac{(q^3+1)(q^3+q)}{2(q+1)}$&$\beta_3$&$\chi_{1,3}$&$\chi_3$\\
\hline
$\frac{q^6-1}{q+1}$&$\zeta_3^i $&&Type $\chi_{19}$\\
\hline
\end{tabular}
\end{table}

\begin{table}
\centering
\caption{Notation of Characters of $G_2(q)$}
\label{tab:notationG2}
\begin{tabular}{|c|c|c|c|}
\hline
Degree & Guralnick-Tiep \cite{TiepGuralnick04} &Enomoto-Yamada \cite{EnomotoYamada} & Hiss-Shamash \cite{Hiss89}, \cite{HissShamash90},\cite{shamash87},\cite{shamash89},\cite{shamash92}\\
\hline
$\frac{(q^3+1)(q^3-q)}{2(q-1)}$&$(\rho_3^1)|_{G_2(q)}$&$\theta_2$&$X_{15}$\\
\hline
$\frac{(q^3-1)(q^3-q)}{2(q+1)}$&$(\alpha_3)|_{G_2(q)}$&$\theta_2'$&$X_{17}$\\
\hline
$\frac{q^6-1}{q-1}$&$(\tau_3^i)|_{G_2(q)}$&$\chi_3(i)$&$X_{1b}'$\\
\hline
$\frac{q^6-1}{q+1}$&$(\zeta_3^i)|_{G_2(q)}$&$\chi_3'(i)$&$X_{2a}'$\\
\hline
$\frac{q(q^2+q+1)(q+1)^2}{6}$& & $\theta_1$ & $X_{16}$\\
\hline
$\frac{q(q^2-q+1)(q-1)^2}{6}$& & $\theta_1'$ & $X_{18}$\\
\hline
$\frac{q(q^4+q^2+1)}{3}$& & $\theta_4$ &$X_{14}$\\
\hline

\end{tabular}
\end{table}

\prettyref{tab:notationSp} and \prettyref{tab:notationG2} give the notation from different authors for the characters of $Sp_6(q)$ and $G_2(q)$ we most frequently refer to.
\clearpage
\bibliographystyle{unsrt}
\bibliography{researchreferences}
\end{document}